\newtheorem{teo}{Theorem}[section]
\newtheorem{theorem}{Theorem}[section]
\newtheorem{proposition}[teo]{Proposition}
\newtheorem{lemma}[teo]{Lemma}
\newtheorem{corollary}[teo]{Corollary}
\newtheorem{conjecture}[teo]{Conjecture}
\newtheorem{defi}[teo]{Definition}
\newtheorem{definition}{Definition}
\newcommand{\Hom}{\mbox{Hom}}
\newcommand{\End}{{\rm End}}
\newcommand{\Sh}{{\rm Sh}}
\newcommand{\RR}{{\mathbb R}}
\newcommand{\R}{{\mathbb R}}
\newcommand{\Z}{{\mathbb Z}}
\newcommand{\QQ}{{\mathbb Q}}
\newcommand{\Q}{{\mathbb Q}}
\newcommand{\AAA}{{\mathbb A}}
\newcommand{\codim}{{\operatorname{codim}}}
\newcommand{\cH}{{\mathcal{H}}}
\newcommand{\ol}{\overline}
\DeclarePairedDelimiterX{\Nm}[1]{\lVert}{\rVert}{#1}
\newcommand{\sous}{\backslash}
\title[Generalisation of André-Oort and André-Pink-Zannier]{A common generalisation of the André-Oort and André-Pink-Zannier conjectures}
\author{Rodolphe Richard, Andrei Yafaev}
\date{\today}
 \address{Richard, Yafaev: UCL, Department of Mathematics, Gower street, WC1E 6BT, London, UK}
\email{Rodolphe.Richard@normalesup.org, yafaev@math.ucl.ac.uk}
\begin{document}
\setcounter{tocdepth}{1}
\setcounter{secnumdepth}{4}
\maketitle

\begin{abstract}
{We introduce a ``hybrid'' conjecture which is a common generalisation of the André-Oort conjecture and the André-Pink-Zannier conjecture and we prove that it is a consequence of the Zilber-Pink conjecture. We also show that our hybrid conjecture implies the Zilber-Pink conjecture for hypersurfaces contained in weakly special subvarieties.}
\end{abstract}

\tableofcontents

\section{Introduction}

In Diophantine geometry, Zilber-Pink type of questions have been a very active subject of research since mid 00s. In some aspects they are far reaching generalisations
of the Mordell-Lang conjecture for abelian varieties. Other motivations for these questions include 
applications in transcendence theory and model theory.

The most general recent results are the proof of the André-Oort conjecture in full generality (see \cite{AO-PST} and references therein), and André-Pink Zannier conjecture 
for Shimura varieties of abelian type (see \cite{APZ2} \cite{APZ2-CRAS}). 
These proofs rely on the so-called Pila-Zannier strategy which uses, among others, ideas and results from the theory of o-minimality and point counting, results on functional transcendence
(which are also proved using o-minimality).
We refer to the very recent monograph \cite{P} for an excellent overview of the methods and recent results on the Zilber-Pink type of questions.

In this paper we limit ourselves to the case of
\emph{ pure } Shimura varieties.  The Zilber-Pink conjecture can be stated for mixed Shimura varieties, or
even more generally for variations of $\mathbb{Z}$-Hodge structures.

\addtocontents{toc}{\protect\setcounter{tocdepth}{0}}
\section*{Contents of the article}
\addtocontents{toc}{\protect\setcounter{tocdepth}{1}}
We introduce a ``hybrid'' conjecture (Conjecture~\ref{conj:AO+APZ}) which is a common generalisation of the André-Oort conjecture and the André-Pink-Zannier conjecture, and we prove that it is a consequence of the Zilber-Pink conjecture. We ultimately show (after Lem.~\ref{lem:7.5}) that our hybrid conjecture implies the Zilber-Pink conjecture (as stated in Conjecture~\ref{conj:roro-ZP}) for hypersurfaces of weakly special subvarieties.


 In \S2 and \S3 we recall relevant facts about Shimura varieties, special and weakly special subvarieties as well as various notions of Hecke orbit. 
In \S4 we state the generalised Andr\'e-Pink-Zannier conjecture, the Andr\'e-Oort conjecture (we give several equivalent formulations) and the Zilber-Pink conjecture.

 
 Section 5 introduces the central object in Conjecture~\ref{conj:AO+APZ}: the notion, for a Shimura datum~$(M,X_M)$ and a Hodge generic point~$x\in X_M$,  of the \emph{hybrid Hecke orbit}~$\Sigma_{\Gamma\sous X^+}(M,X_M,x)$ of~$x$ in a Shimura variety~$\Gamma\sous X^+$. This hybrid Hecke orbit contains both the generalised Hecke orbit of~$x$ in~$\Gamma\sous X^+$ and special points of~$\Gamma\sous X^+$. 

We state Conjecture~\ref{conj:AO+APZ} and two equivalent forms of it (Conjectures~\ref{conj:AO+APZ-bis}  and~\ref{conj:AO+APZ-ter}). We then deduce it from the Zilber-Pink conjecture. These reformulations involve the set~$WS_{\Gamma\sous X^+}(M,X_M,x)$ of weakly special subvarieties of~$\Gamma\sous X^+$ which non-trivially intersect~$\Sigma_{\Gamma\sous X^+}(M,X_M,x)$. 

The sets~$\Sigma_{\Gamma\sous X^+}(M,X_M,x)$ and~$WS_{\Gamma\sous X^+}(M,X_M,x)$ have nice functoriality properties with respect to morphisms of Shimura data (see for instance \emph{Remarks} following Definition~\ref{def:orbite hybride},  Corollaries~\ref{cor:functorial1} and  Corollary~\ref{Functoriality}).

In \S\ref{sec:intersections} we study intersections of weakly special subvarieties with special subvarieties. We believe that our results here are of independent interest.

In~\S\ref{sec:relation to ZP} we relate Conjecture~\ref{conj:AO+APZ} to the Zilber-Pink conjecture.
We consider the product variety~$S:=\Gamma_M\sous X^+_M\times \Gamma\sous X^+$ and the weakly special subvariety~$W=\{\Gamma_M\cdot x\}\times \Gamma\sous X^+$. We prove that a subset~$V'\subseteq \Gamma\sous X^+$ is in~$WS_{\Gamma\sous X^+}(M,X_M,x)$ if and only if~$V:=\{\Gamma_M\cdot x\}\times V'$ is a component of the intersection of~$W$ and a special subvariety of~$S$.

In \S\ref{sec:AD}
we relate our hybrid Hecke orbits and hybrid conjecture to the recent work~\cite[\S3]{AD} of V. Aslanyan and C. Daw.
Their Conjecture~\cite[Conj.~3.4]{AD} is an immediate consequence of Conjecture~\ref{conj:AO+APZ}, but  Conjecture~\ref{conj:AO+APZ} is not a direct consequence of~\cite[Conj.~3.4]{AD}.

In~\S\ref{sec:ML}, we make explicit an analogy with Mordell-Lang conjecture in abelian varieties. This suggests an extension of our setting to mixed Shimura varieties.

\addtocontents{toc}{\protect\setcounter{tocdepth}{0}}
\section*{Acknowledgements}
\addtocontents{toc}{\protect\setcounter{tocdepth}{1}}
 The first author is very grateful to the IHES for hospitality and the second author is very grateful to the organisers of the BIRS Workshop `Moduli, Motives and Bundles' in Oaxaca, Mexico in September 2022.

\section{Shimura varieties and Weakly special subvarieties}

General references for Shimura varieties are \cite{Deligne} and~\cite{Milne}. Let $(G,X)$ be a Shimura datum in the sense of \cite{Deligne} and let $X^+$ be a connected component of $X$.
For every maximal compact subgroup~$K^+_{\infty}$ of~$G^{ad}(\RR)^+$, there exists a unique~$G^{ad}(\R)^+$ equivariant bijection~$X^+\simeq G^{ad}(\RR)^+/K^+_{\infty}$. 

Let~$\Gamma\subset G(\QQ)_+ = Stab_{G(\QQ)}(X^+)$ be a torsion-free arithmetic lattice. Then $\Gamma\backslash X^+$ is locally hermitian symmetric and has a canonical structure of quasi-projective algebraic variety
by a result of~Bailly-Borel~\cite{BailyBorel}. If~$\Gamma$ is a ``congruence lattice'', then~$\Gamma\sous X^+$ is a connected component of the Shimura variety $Sh_K(G,X)$ in the sense of \cite{Deligne} and it admits a canonical model over a certain 
explicit number field. By a common abuse of terminology we also simply refer to $\Gamma\backslash X^+$ as a Shimura variety. It is also refered to as \emph{arithmetic variety }.

\begin{definition}
A \emph{weakly special} subvariety of~$\Gamma\backslash X^+$ is a totally geodesic irreducible algebraic subvariety of~$\Gamma\backslash X^+$.
\end{definition}
With our definition every $0$-dimensional subvariety is a weakly special subvariety.

Following Moonen~\cite{Moonen} and Ullmo~\cite[Prop.~3.1]{U1},  every weakly special subvariety of~$\Gamma\backslash X^+$ can be written as
\begin{equation}\label{Eq-fs}
Z=\Gamma\backslash\Gamma\cdot H(\RR)^+\cdot x\subseteq \Gamma\backslash X^+
\end{equation}
for some $\QQ$-reductive subgroup~$H$ of $G$ and some point~$x\in X^+$ such that~$X^+_H:=H(\RR)^+\cdot x$ is a Hermitian symmetric subspace of~$X^+$.
 In this case,~$\Gamma_H=\Gamma \cap Stab_{H(\QQ)}(X_H^+)$ is an arithmetic lattice and, up to the finite morphism \(\Gamma_H\sous X^+_H\to Z\), the variety~$Z$ is an arithmetic variety.

\begin{definition}\label{def:MT}
Let~$x_0\in X$ and let~$(M,M(\R)\cdot x_0)$ be the smallest Shimura subdatum~$(H,X_H)\leq (G,X)$ such that~$x_0\in X_H$. We refer to~$M$ as the \emph{Mumford-Tate group} of~$x_0$, to~$(M,M(\R)\cdot x_0)$ as the \emph{Mumford-Tate datum} of~$x_0$, and call~$(M,M(\R)\cdot x_0,x_0)$ the \emph{pointed Mumford-Tate datum} of~$x_0$.
\end{definition}

\begin{definition}\label{def:special}
A point~$x\in X$ is \emph{special} if its Mumford-Tate group is an algebraic torus.
 A \emph{special point} of~$\Gamma\sous X^+$ is the image of a special~$x\in X^+$.
A \emph{special} subvariety~$Z\subseteq \Gamma\sous X^+$ is a weakly special subvariety containing a special point.
\end{definition}


%
%

\section{Various notions of Hecke orbits}
We consider a Shimura datum~$(G,X)$ and a point~$x_0\in X$.
\begin{defi}\label{defi:Hecke}
We define the \emph{classical Hecke orbit} of~$x_0$ to be the following subset of $X$:
\[
\cH^c(x_0):=G(\QQ)\cdot x_0.
\]
After identifying~$G$ with its image under a faithful representation~$\rho:G\to GL(n)$, we define the~\emph{$\rho$-Hecke orbit} of~$x_0$
as
\[
\cH^{\rho}
(\rho(GL(n,\QQ))\cdot x_0)\cap X.
\]
\end{defi}
The image of~$\cH^c(x_0)$ in~$\Gamma\sous X^+$ is the orbit of~$x_0$ under Hecke correspondences~$\Gamma\sous X^+\xleftarrow{}(\Gamma\cap \Gamma')\sous X^+\xrightarrow{} \Gamma'\sous X^+\simeq \Gamma\sous X^+ $ with~$\Gamma'=g\Gamma g^{-1}$ and~$g\in G(\Q)$. One issue with the notion of classical Hecke orbit is that it does not behave well under passing to the adjoint group: if~$G$ is semi-simple with non-trivial centre,  then~$(G^{ad}(\QQ)\cdot x_0)\cap X$ is the union infinitely many disjoint classical Hecke orbits. 

In  moduli spaces of polarised abelian varieties, the classical Hecke orbit of a moduli point~$[A]$ of a polarised abelian variety~$A$ is the set of~$[B]$ where~$B$ is such that there exists an isogeny~$A\to B$ \emph{which is compatible with the polarisations}. In order to allow all isogenies, we use the~$\rho$-Hecke orbit for the natural representation of~$G=GSp(2g)\leq GL(2g)$. We refer to~\cite{OrrThesis} for more details.

The following is defined in~\cite{APZ1}. 
We let~$\Hom(M,G)(\QQ)$ be the set of algebraic group morphisms \emph{defined over~$\QQ$}. 
\begin{definition}[{\cite[]{APZ1}}] \label{gho}
The \emph{Generalised Hecke orbit of $x_0$ in $X$} is
$$
\cH_X(x_0) := X\cap \{ \phi \circ  x_0 :  \phi \in  \Hom(M,G)(\QQ)\}
$$
and the \emph{Generalised Hecke orbit of $x_0$ in~$\Gamma\sous X^+$} is
$$
\cH_{\Gamma\sous X^+}(x_0) := \Gamma\sous (\cH_X(x_0)\cap X^+).
$$
\end{definition}
Another useful notion of Hecke orbit is the following. We denote by~$\phi_0:M\to G$ the natural inclusion homomorphism
and by~$(G\cdot \phi_0)(\QQ)=(G(\ol{\QQ})\cdot \phi_0)\cap\Hom(M,G)(\QQ)$ the set of morphisms~$\phi:M\to G$ which are defined over~$\Q$ and conjugated to~$\phi_0$ by an element of~$G(\ol{\QQ})$.
\begin{defi} \label{geoho}
We define the \emph{Geometric Hecke orbit} $\cH(x_0)$ of $x_0$ \emph{in $X$} as
$$
\cH^g(x_0) := X\cap \{ \phi \circ  x_0 :  \phi \in  (G\cdot \phi_0)(\QQ)\}
$$
and the \emph{Geometric Hecke orbit} $\cH^g([x_0, g_0])$ of $[x_0, g_0]$ \emph{in $\Sh_K(G,X)$} as
$$
\cH^g([x_0,g_0]) := \{ [x,g] : x\in \cH^g(x_0), g\in G(\AAA_f) \}.
$$
\end{defi}
We have the following inclusions 
\[
\cH^c(x_0)\subseteq\cH^g(x_0)\subseteq \cH^{\rho}(x_0)\subseteq \cH(x_0).
\]
Only the second inclusion is not obvious. It follows from~\cite[Th.\, 2.11]{APZ1}.

The reason for introducing the generalised and the geometric Hecke orbits is that, unlike the classical Hecke orbit, they have natural functoriality properties with respect to
morphims of Shimura data (see \cite{APZ1}). 

A fundamental property (see~\cite[Th.~2.4]{APZ1}) is the following
\begin{theorem}[{see~\cite[Th.~2.4]{APZ1}}]\label{thm:finiteness geometric}
Any generalised Hecke orbit is a finite union 
of geometric Hecke orbits.
\end{theorem}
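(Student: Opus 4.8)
The plan is to make $\cH_X(x_0)$ explicit as a union of geometric Hecke orbits indexed by the homomorphisms $\phi$ defining it, and to show that only finitely many of the resulting summands are needed. The starting point is an elementary observation: for every $\phi\in\Hom(M,G)(\QQ)$ with $\phi\circ x_0\in X$ one has $\phi(M)=\MT(\phi\circ x_0)$. Indeed $\phi(M)$ is a $\QQ$-subgroup of $G$ through which $\phi\circ x_0$ factors, and if $\phi\circ x_0$ factored through some $\QQ$-subgroup $N'\leq\phi(M)$ then $\phi^{-1}(N')$ would be a $\QQ$-subgroup of $M$ through which $x_0$ factors, whence $\phi^{-1}(N')=M=\MT(x_0)$ (Definition~\ref{def:MT}) and $\phi(M)\subseteq N'$. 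So $\phi$ factors canonically as $M\twoheadrightarrow N_\phi\hookrightarrow G$ with $N_\phi:=\phi(M)=\MT(\phi\circ x_0)$, and by Definition~\ref{geoho} the orbit $\cH^g(\phi\circ x_0)$ is built from the inclusion $N_\phi\hookrightarrow G$ and its $G(\oQ)$-conjugacy class.

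The next step is to check that $G(\oQ)$-conjugate homomorphisms produce points in one and the same geometric Hecke orbit. Let $\phi,\phi'\in\Hom(M,G)(\QQ)$ both send $x_0$ into $X$ and suppose $\phi'=\ad(g)\circ\phi$ with $g\in G(\oQ)$. Because $\phi$ and $\phi'$ are both defined over $\QQ$, comparing $\phi'$ with its $\Gal(\oQ/\QQ)$-conjugates shows that $g^{-1}\cdot{}^{\sigma}g\in Z_G(N_\phi)(\oQ)$ for all $\sigma$; hence the morphism $\chi\colon N_\phi\to G$, $n\mapsto gng^{-1}$, is itself defined over $\QQ$, is $G(\oQ)$-conjugate to the inclusion $N_\phi\hookrightarrow G$, and satisfies $\chi\circ y=\phi'\circ x_0$, where $y$ is the corestriction of $\phi\circ x_0$ to $N_\phi$. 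By Definition~\ref{geoho} this gives $\phi'\circ x_0\in\cH^g(\phi\circ x_0)$. Together with the obvious inclusion $\cH^g(z)\subseteq\cH_X(x_0)$ for any $z\in\cH_X(x_0)$, we conclude
\[
\cH_X(x_0)=\bigcup_{[\phi]}\cH^g(\phi\circ x_0),
\]
the union running over the $G(\oQ)$-conjugacy classes of $\phi\in\Hom(M,G)(\QQ)$ with $\phi\circ x_0\in X$. It remains to prove that there are only finitely many such classes.

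For the finiteness I would use the decomposition $M=M^{\der}\cdot Z(M)^{\circ}$. On the derived part, the restrictions $\phi|_{M^{\der}}\colon M^{\der}\to G$ fall into finitely many $G(\oQ)$-conjugacy classes by the classical rigidity of homomorphisms out of a reductive group: since $H^{1}(M^{\der},\Lie G)=0$, each conjugacy class is open in the affine finite-type (hence quasi-compact) scheme $\Hom(M^{\der},G)$, and pairwise disjoint nonempty open sets covering a quasi-compact space are finite in number. Fixing $\phi|_{M^{\der}}$ up to conjugacy, one is left to bound the number of restrictions $\phi|_{Z(M)^{\circ}}$ up to conjugation by $Z_G(\phi(M^{\der}))$, subject to the constraint $\phi\circ x_0\in X$. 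After conjugating so that $\phi\circ x_0=\phi_0\circ x_0$, the homomorphisms $\phi$ and $\phi_0$ coincide on the subtorus $x_0(\SSS)\subseteq M_\RR$; in particular $\phi$ fixes the weight cocharacter of $(G,X)$ and, over $\CC$, the Hodge cocharacter $\mu_{x_0}$. The expected mechanism is that this — together with the fact that $\phi(Z(M)^{\circ})$ is a quotient of the fixed torus $Z(M)^{\circ}$ and is, up to the already bounded contribution of $\phi(M^{\der})$, the connected centre of a Mumford--Tate group of a point of $X$ — leaves only finitely many possibilities for $\phi|_{Z(M)^{\circ}}$ modulo conjugacy.

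The main obstacle is exactly this last, toric, step. The semisimple part is finite essentially for free, from classical results on homomorphisms of reductive groups; but a torus is not rigid, and the connected centre of $M$ a priori embeds into $G$ in infinitely many inequivalent ways. The whole weight of the proof therefore lies in extracting enough rigidity from the Hodge-theoretic condition $\phi\circ x_0\in X$ — via the weight and Hodge cocharacters attached to $(G,X)$ and the way they are shared across $X$ — to reduce the toric possibilities to a finite list.
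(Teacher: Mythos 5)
The paper does not prove this theorem — it simply cites \cite[Th.~2.4]{APZ1} — so there is no proof in the present text to compare against; I will therefore assess your argument on its own terms.

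Your reduction is sound as far as it goes. The observation that $\phi(M)=\MT(\phi\circ x_0)$ is correct, and your verification that $G(\oQ)$-conjugate $\phi,\phi'$ (both defined over $\QQ$, both sending $x_0$ into $X$) produce points lying in a single geometric Hecke orbit is also correct: the Galois cocycle computation shows that $g^{-1}\cdot{}^{\sigma}g$ centralises $N_\phi$, so $\ad(g)|_{N_\phi}$ is defined over $\QQ$ and lies in $(G\cdot\iota_{N_\phi})(\QQ)$. This gives a clean partition $\cH_X(x_0)=\bigsqcup_{[\phi]}\cH^g(\phi\circ x_0)$, and what remains is exactly finiteness of the indexing set. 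The derived-group half of that finiteness is indeed standard: $H^1(M^{\der},\Lie G)=0$ forces the conjugation orbits in $\Hom(M^{\der},G)$ to be open, and quasi-compactness of the Hom-scheme does the rest.

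The genuine gap is the toric half, and you have in effect conceded it. It is not enough to note that $\phi$ agrees with $\phi_0$ on $x_0(\SSS)$ and that $\phi$ therefore respects the weight and Hodge cocharacters: the condition $\phi\circ x_0\in X$ constrains the composition $\phi\circ x_0|_{\SSS_\RR}$, not $\phi|_{Z(M)^\circ}$ itself, and $Z(M)^\circ$ can be strictly larger than the real-Zariski closure of $x_0(\SSS)\cap Z(M)^\circ_\RR$. A $\QQ$-torus has, in general, infinitely many pairwise non-conjugate $\QQ$-homomorphisms into a reductive target, so ``the expected mechanism'' you invoke is precisely the content that has to be proved, and it is not a formal consequence of the two cocharacter constraints you list. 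What is missing is an argument exploiting the specific arithmetic structure of the connected centre of a Mumford--Tate group in a Shimura datum (the split/weight part plus a torus split over a CM field, with character lattice generated by the Galois orbit of the Hodge cocharacter) to bound the toric possibilities. Without that input the proof does not close, and since this is exactly the point where the actual difficulty of \cite[Th.~2.4]{APZ1} lies, the proposal should be regarded as a correct reduction followed by an unproved core step rather than as a proof.
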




\section{André-Pink-Zannier, André-Oort and Zilber-Pink conjectures}
The following conjecture is stated in~\cite{APZ1}.
\begin{conjecture}[Generalised André-Pink-Zannier conjecture]\label{conj:APZ}
 Let~$V$ be an algebraic subvariety of a Shimura variety~$\Gamma\sous X^+$. Let~$x\in X^+$ and let~$\cH_{\Gamma\sous X^+}(x)$ be its generalised Hecke orbit.
 
Then the Zariski closure of~$V\cap \cH_{\Gamma\sous X^+}(x)$ is a finite union of weakly special subvarieties of~$\Gamma\sous X^+$.
\end{conjecture}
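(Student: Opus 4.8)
\emph{Sketch of a proof strategy.} The plan is to follow the Pila--Zannier strategy, in the form in which it has already been carried out for Shimura varieties of abelian type. We may assume $V$ is irreducible and, replacing $V$ by an irreducible component of the Zariski closure of $V\cap\cH_{\Gamma\sous X^+}(x)$ (one checks that such a component is the Zariski closure of its own intersection with the Hecke orbit), that $V\cap\cH_{\Gamma\sous X^+}(x)$ is Zariski-dense in $V$; the goal becomes to show that $V$ is weakly special. By Theorem~\ref{thm:finiteness geometric} the generalised Hecke orbit is a finite union of geometric Hecke orbits, so we may moreover assume that it is $V\cap\cH^g(x)$ that is Zariski-dense, the relevant points being of the form $(g\phi_0g^{-1})\circ x_0$ with $g\in\oQ$-points of $G$ and $g\phi_0g^{-1}$ defined over $\QQ$. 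Passing to the adjoint Shimura datum (weakly special subvarieties and geometric Hecke orbits are compatible with this operation) we may also assume $(G,X)$ is of adjoint type, and fix a number field $L$ over which $\Gamma\sous X^+$, $V$ and $x$ are defined.

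The geometric input is a definable uniformisation: one fixes a semi-algebraic fundamental set $\cF\subseteq X^+$ for $\Gamma$ such that $\cF$ and the restriction $\pi|_{\cF}$ of the uniformisation $\pi\colon X^+\to\Gamma\sous X^+$ are definable in $\RR_{\mathrm{an,exp}}$ (Klingler--Ullmo--Yafaev, with Peterzil--Starchenko in the Siegel case). Each $y\in V\cap\cH^g(x)$ is $\pi(\tilde y)$ for a unique $\tilde y\in\cF$, and is linked to $x$ by a Hecke correspondence whose "complexity" we measure by a quantity $\Delta(y)$ (in a moduli-of-abelian-varieties chart, the degree of the corresponding isogeny). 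Two arithmetic estimates are then needed. First, a \emph{height upper bound}: $\tilde y$ lies within a bounded distance of a point of $X^+$ with algebraic coordinates of height $\ll\Delta(y)^{c}$, for a constant $c$ independent of $y$, a statement of the same nature as the Baily--Borel-height estimates of Orr. Second, a \emph{Galois-orbit lower bound}: $\lvert\Gal(\oQ/L)\cdot y\rvert\gg\Delta(y)^{\delta}$ for some $\delta>0$, all Galois conjugates of $y$ again lying in $V\cap\cH^g(x)$.

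Granting these, one applies the Pila--Wilkie counting theorem to the definable set
\[
\cZ=\{(\tilde y,\ \text{complexity data})\in\cF\times(\cdots)\ :\ \pi(\tilde y)\in V\}.
\]
A point $y$ of large complexity $\Delta=\Delta(y)$ contributes $\gg\Delta^{\delta}$ algebraic points of height $\ll\Delta^{c}$ to $\cZ$; taking the height threshold $T=\Delta^{c}$ and $\epsilon<\delta/c$, Pila--Wilkie forces all but $O_{\epsilon}(T^{\epsilon})$ of them onto connected positive-dimensional semi-algebraic blocks of $\cZ$ of bounded complexity. Projecting such a block into $X^+$ and then into $\Gamma\sous X^+$ yields a positive-dimensional algebraic subvariety of $V$ which, by the hyperbolic Ax--Lindemann theorem (Klingler--Ullmo--Yafaev; Pila--Tsimerman and Mok--Pila--Tsimerman), is contained in a weakly special subvariety $W\subseteq V$ meeting $\cH^g(x)$. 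A standard induction on dimension then closes the argument: the $W$'s produced in this way have bounded degree, hence only finitely many maximal ones occur; if none equals $V$, their union is a proper Zariski-closed subset of $V$ still containing a Zariski-dense subset of $V\cap\cH^g(x)$, which is absurd, so $V$ itself is weakly special.

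The step I expect to be the genuine obstacle is the Galois-orbit lower bound. Functional transcendence (Ax--Lindemann, indeed Ax--Schanuel) and Pila--Wilkie are now unconditional, and the height upper bound is of Orr's type; but a polynomial lower bound $\lvert\Gal(\oQ/L)\cdot y\rvert\gg\Delta(y)^{\delta}$ for points of a generalised Hecke orbit is at present established only in the abelian-type case, via Masser--W\"ustholz isogeny estimates together with the reduction steps of \cite{APZ1} and \cite{APZ2}. For general $(G,X)$ it remains conjectural, so the statement above should be viewed as a theorem in abelian type and as conditional on such Galois bounds in general.
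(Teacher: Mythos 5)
This statement is a \emph{conjecture} in the paper (Conjecture~\ref{conj:APZ}), not a theorem: the paper offers no proof of it. It is stated as background, attributed to~\cite{APZ1}, and the introduction cites~\cite{APZ2} and~\cite{APZ2-CRAS} only for the special case of Shimura varieties of abelian type. So there is no ``paper's own proof'' to compare against; the paper simply records the conjecture and goes on to generalise it (to Conjecture~\ref{conj:AO+APZ}).

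That said, your sketch is a fair summary of the Pila--Zannier strategy as carried out in~\cite{APZ1,APZ2} for the abelian-type case, and you correctly identify the genuine obstruction: the polynomial Galois-orbit lower bound for points in a generalised Hecke orbit, currently available only in abelian type via Masser--W\"ustholz-style isogeny estimates. You also correctly invoke Theorem~\ref{thm:finiteness geometric} to reduce from generalised to geometric Hecke orbits, and the reduction to the case where the Hecke orbit is Zariski-dense in $V$. One small caution: the blunt induction you describe at the end (``the $W$'s produced have bounded degree, hence only finitely many maximal ones occur'') glosses over the issue that the weakly special subvarieties arising from Ax--Lindemann can a priori have unbounded degree; the actual argument in~\cite{APZ1,APZ2} requires more care here (an induction on dimension combined with controlling the Mumford--Tate groups or the associated Shimura subdata). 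But since your proposal honestly declares itself conditional in general and a sketch in abelian type, and the paper contains no proof to begin with, the main point is simply that you should not present this as a provable statement within the scope of the paper: it is, and remains, an open conjecture beyond abelian type.
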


A major recent achievement is a proof of the following conjecture 
(see~\cite{AO-PST}, historical remarks  and references contained therein).
\begin{conjecture}[André-Oort conjecture]\label{conj:AO} Let~$V$ be an algebraic subvariety of an arithmetic variety~$\Gamma\sous X^+$, and let~$\Sigma$ be the set of all special points of~$\Gamma\sous X^+$ belonging to~$V$.

Then the Zariski closure of~$\Sigma$ is a finite union of special subvarieties of~$\Gamma\sous X^+$.
\end{conjecture}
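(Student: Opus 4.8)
\noindent\emph{Plan of proof.}
Write $S=\Gamma\backslash X^+$. Using that components of intersections of special subvarieties are special, and that special points and special subvarieties are compatible with the finite coverings relating $S$ to connected components of Shimura varieties $\Sh_K(G,X)$, one reduces to the case where $\Gamma$ is a congruence lattice, so $S$ is a connected Shimura variety with canonical model over a number field. Decomposing $\overline{\Sigma}$ into irreducible components, it suffices to take $V$ irreducible with $\Sigma\cap V$ Zariski dense in $V$ and to prove that $V$ is special. Replacing $S$ by the smallest special subvariety of $S$ containing $V$, we may further assume $V$ is \emph{Hodge generic}, i.e.\ contained in no proper special subvariety of $S$; the goal then becomes $V=S$.

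The proof is by the Pila--Zannier method, by induction on $\dim S-\dim V$. Fix the uniformisation $\pi\colon X^+\to S$ and a semialgebraic fundamental set $\mathcal F\subseteq X^+$ for $\Gamma$. Three ingredients are needed. First, \emph{definability}: $\pi|_{\mathcal F}$ is definable in the o-minimal structure $\R_{\mathrm{an},\exp}$, hence so is $\widetilde V:=(\pi|_{\mathcal F})^{-1}(V)$. Second, \emph{heights and Galois orbits}: one attaches to a special point $s\in S$ a complexity $\Delta(s)$, essentially the absolute discriminant of the Mumford--Tate torus of $s$ together with suitable local data, so that $s$ has a preimage in $\mathcal F$ of height $\ll\Delta(s)^{A}$ while $|\Gal(\oQ/E)\cdot s|\gg\Delta(s)^{\delta}$ for fixed constants $A,\delta>0$ and a number field $E$ over which $S$ is defined. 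Third, \emph{functional transcendence}: the hyperbolic Ax--Lindemann--Weierstrass theorem, which states that for any connected semialgebraic $B\subseteq\widetilde V$ the Zariski closure of $\pi(B)$ is a weakly special subvariety of $S$.

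Suppose $V\ne S$. Choose $s\in\Sigma\cap V$ of large complexity. Its Galois conjugates are again special points lying on $V$, of the same complexity, and they lift to $\gg\Delta(s)^{\delta}$ algebraic points of bounded degree and height $\ll\Delta(s)^{A}$ on $\widetilde V$. By the Pila--Wilkie counting theorem, for $\Delta(s)$ large all but $\ll_\epsilon\Delta(s)^{A\epsilon}$ of these points lie on positive-dimensional connected semialgebraic subsets of $\widetilde V$; taking $\epsilon$ with $A\epsilon<\delta$, some conjugate $s'$ of $s$ lies on such a set $B$. By functional transcendence, the Zariski closure $W$ of $\pi(B)$ is a positive-dimensional weakly special subvariety with $s'\in W\subseteq V$; since $W$ contains the special point $s'$, it is in fact \emph{special}. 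Letting the complexity grow, one deduces that the special points of $V$ lying on positive-dimensional special subvarieties of $V$ are Zariski dense in $V$; a standard argument on the resulting family of special subvarieties, invoking the inductive hypothesis inside each one, then forces $V$ itself to be special, i.e.\ $V=S$. This closes the induction.

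The serious obstacle is the second ingredient, and precisely the \emph{lower} bound $|\Gal\cdot s|\gg\Delta(s)^{\delta}$: this is the deep arithmetic input. In the Siegel case it follows from an averaged form of the Colmez conjecture on Faltings heights of CM abelian varieties, combined with Masser--W\"ustholz isogeny estimates and work of Tsimerman. For a general Shimura variety it requires constructing canonical (Arakelov-type) height functions on $S$ and invoking the hyperbolic Ax--Schanuel theorem, and was completed only recently; see \cite{AO-PST} and the references therein. The definability and functional transcendence inputs, though substantial, are by now standard parts of the o-minimal toolkit applied to period maps.
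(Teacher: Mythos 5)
The statement you are asked to prove is Conjecture~\ref{conj:AO}, which the paper does \emph{not} prove: it is stated purely as a conjecture (indeed as a now-theorem, with the proof attributed to \cite{AO-PST} and the references therein), and the paper's role here is only to record the statement and compare it with Conjecture~\ref{conj:AO+APZ}. So there is no internal proof in the paper to set your argument against; the honest comparison is between your sketch and the literature the paper cites.

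Taken on its own terms, your write-up is a reasonable high-level summary of the Pila--Zannier strategy, and you correctly identify the single hard arithmetic ingredient (the Galois-orbit lower bound $|\Gal\cdot s|\gg\Delta(s)^\delta$ in full generality) as the point that was only closed in \cite{AO-PST}. Two remarks of substance. First, the closing induction is stated too loosely: after you obtain Zariski density of positive-dimensional special subvarieties in $V$, the step ``a standard argument \ldots forces $V$ itself to be special'' is precisely where one needs a nontrivial structural lemma (Ax--Lindemann again, applied to the family, together with the finiteness of ``types'' of weakly special subvarieties and a descent to a quotient Shimura datum); it is not a formal consequence of an inductive hypothesis and should be spelled out or cited. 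Second, your attribution of the general Galois bound to ``canonical (Arakelov-type) height functions \ldots and the hyperbolic Ax--Schanuel theorem'' is a bit off: for Andr\'e--Oort the functional-transcendence input is Ax--Lindemann, and the height input in \cite{AO-PST} comes from a different circle of ideas (heights of CM points via a comparison with Faltings heights and the work recorded in their appendix), not from Ax--Schanuel. Since you, like the paper, ultimately defer to \cite{AO-PST} for the content, these are matters of precision rather than fatal gaps, but you should not present the sketch as self-contained.
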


As every special subvariety contains a Zariski dense set of special points, 
Conjecture~\ref{conj:AO} is equivalent to the following.
	\begin{conjecture}[Reformulation of André-Oort conjecture]\label{conj:AO-bis} Let~$V$ be an algebraic subvariety of a Shimura variety~$\Gamma\sous X^+$, and let~$\{Z_1;\ldots\}$ be the set of all special subvarieties~$Z_i\subseteq \Gamma\sous X^+$ such that~$Z_i\subseteq V$.

Then the Zariski closure of~$\bigcup Z_i$ is a finite union of special subvarieties of~$\Gamma\sous X^+$. In other words, the set~$\{Z_1;\ldots\}$ has finitely many maximal elements with respect to inclusion.
\end{conjecture}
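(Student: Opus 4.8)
The plan is to establish the equivalence of Conjectures~\ref{conj:AO} and~\ref{conj:AO-bis} as a purely formal argument built on two inputs, which I would isolate first. The first is the density statement already quoted in the text: every special subvariety $Z$ of $\Gamma\backslash X^+$ contains a Zariski-dense set of special points (the classical density of CM points on the connected Shimura variety attached to the Shimura subdatum underlying $Z$). The second is the bookkeeping remark that, under the conventions of \S2, a special point of $\Gamma\backslash X^+$ is precisely a $0$-dimensional special subvariety. I would also note at the outset that $V$ is Zariski closed, so that a subset of $V$ has its Zariski closure inside $V$; in particular $\overline{\Sigma}^{\mathrm{Zar}}\subseteq V$, and any special subvariety contained in the closure of a subset of $V$ is again contained in $V$.

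For the implication \ref{conj:AO}$\Rightarrow$\ref{conj:AO-bis}: let $\Sigma$ be the set of special points of $V$ and, using Conjecture~\ref{conj:AO}, write $\overline{\Sigma}^{\mathrm{Zar}}=T_1\cup\cdots\cup T_n$ with each $T_j$ special. Each $T_j\subseteq V$, so each $T_j$ is one of the $Z_i$. Conversely, by density each $Z_i$ is the Zariski closure of the set of its special points, all of which lie in $\Sigma$ (since $Z_i\subseteq V$), whence $Z_i\subseteq\overline{\Sigma}^{\mathrm{Zar}}=\bigcup_j T_j$; irreducibility of $Z_i$ then forces $Z_i\subseteq T_j$ for some $j$. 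Therefore $\bigcup_i Z_i=\bigcup_j T_j$, which coincides with its own Zariski closure and is a finite union of special subvarieties; equivalently, the maximal elements of $\{Z_1;\ldots\}$ are among $T_1,\dots,T_n$.

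For the implication \ref{conj:AO-bis}$\Rightarrow$\ref{conj:AO}: given $V$ and $\Sigma$ as in Conjecture~\ref{conj:AO}, each $s\in\Sigma$ is a $0$-dimensional special subvariety contained in $V$, hence one of the $Z_i$; so $\Sigma\subseteq\bigcup_i Z_i$ and thus $\overline{\Sigma}^{\mathrm{Zar}}\subseteq\overline{\bigcup_i Z_i}^{\mathrm{Zar}}$. By Conjecture~\ref{conj:AO-bis} the right-hand side equals $T_1\cup\cdots\cup T_n$ for finitely many special subvarieties, each of which lies in $V$ by the opening remark. For the reverse inclusion I would invoke density once more: each $T_j$ is special, so it contains a Zariski-dense set of special points, which lie in $\Sigma$ because $T_j\subseteq V$; hence $T_j\subseteq\overline{\Sigma}^{\mathrm{Zar}}$. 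Combining the two inclusions yields $\overline{\Sigma}^{\mathrm{Zar}}=T_1\cup\cdots\cup T_n$, a finite union of special subvarieties, which is Conjecture~\ref{conj:AO}.

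I do not anticipate a real obstacle: the sole non-formal ingredient is the density of special points on special subvarieties, which is classical. The only minor points to check are that ``algebraic subvariety'' is taken to mean Zariski closed (the standard convention in this circle of problems; otherwise one replaces $V$ by $\overline{V}$), and that the two formulations inside Conjecture~\ref{conj:AO-bis} agree — this is the elementary observation that $\{Z_1;\ldots\}$ satisfies the ascending chain condition (dimension is bounded by $\dim V$ and strictly increases along a chain of distinct irreducible subvarieties), so that having finitely many maximal elements is the same as $\bigcup_i Z_i$ being the finite union of those maximal members, which are then the irreducible components of the resulting closed set and are special, being the Zariski closures of the special points they contain.
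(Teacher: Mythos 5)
Your argument is correct and is exactly the one the paper has in mind: the paper dispatches the equivalence with the single remark ``As every special subvariety contains a Zariski dense set of special points,'' and your proposal simply spells out both directions of that reduction (density to recover each $Z_i$ inside $\overline{\Sigma}^{\mathrm{Zar}}$ and to recover each $T_j$ from its special points, plus the observation that a special point is a $0$-dimensional special subvariety).
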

Recall that irreducible components of the intersection of two special subvarieties are special varieties. 
Thus, in Conjectures~\ref{conj:AO} and~\ref{conj:AO-bis} we may assume
\[
\text{ $\Gamma\sous X^+$ is the smallest special subvariety containing~$V$. }
\]
Conjecture~\ref{conj:AO-bis} for a given~$V$ is a consequence of Conjecture~\ref{conj:AO-ter} applied to the irreducible components of~$\ol{\bigcup Z_i}^{Zar}$. Conjecture~\ref{conj:AO-ter} for a given~$V$ is a consequence of Conjecture~\ref{conj:AO-bis}.
\begin{conjecture}[Another formulation of André-Oort conjecture]\label{conj:AO-ter}
Let~$V$ be an algebraic subvariety of an arithmetic variety~$\Gamma\sous X^+$.

Let~$\{Z_1;\ldots\}$ be the set of all special subvarieties~$Z_i\subseteq \Gamma\sous X^+$ 
such that~$Z_i\subseteq V$.

If~$\bigcup Z_i$ is Zariski dense in~$V$, then~$V$ is a special subvariety.
\end{conjecture}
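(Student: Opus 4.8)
The plan is to show that Conjectures~\ref{conj:AO-bis} and~\ref{conj:AO-ter} are equivalent, given that the André--Oort conjecture in its original form (Conjecture~\ref{conj:AO}) is known, by a standard Noetherian induction argument on the collection of special subvarieties contained in $V$.

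First I would establish the equivalence of Conjectures~\ref{conj:AO} and~\ref{conj:AO-bis}. The key observation, already flagged in the excerpt, is that every special subvariety $Z$ contains a Zariski-dense set of special points: indeed $Z$ is itself an arithmetic variety (up to a finite morphism $\Gamma_H\sous X_H^+\to Z$), and special points of $Z$ are special points of $\Gamma\sous X^+$ lying on $Z$, so the density of special points in a special subvariety follows from the same statement for $\Gamma\sous X^+$ itself, which is classical (CM points are dense, e.g.\ because one can conjugate the Mumford--Tate torus by elements of $H(\QQ)$ to reach a Zariski-dense subset of $X_H^+$). Consequently, if $\Sigma$ is the set of all special points in $V$ and $\{Z_i\}$ the set of special subvarieties contained in $V$, then $\ol{\Sigma}^{Zar}=\ol{\bigcup Z_i}^{Zar}$: the inclusion $\subseteq$ is clear since each special point is a $0$-dimensional special subvariety contained in $V$, and $\supseteq$ holds because each $Z_i\subseteq V$ has its special points in $\Sigma$ and is their Zariski closure. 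Hence Conjecture~\ref{conj:AO} (finiteness of the number of irreducible components, equivalently of maximal elements) translates directly into Conjecture~\ref{conj:AO-bis}.

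Next I would prove the equivalence of Conjecture~\ref{conj:AO-bis} with Conjecture~\ref{conj:AO-ter}. For the implication \ref{conj:AO-bis}$\Rightarrow$\ref{conj:AO-ter}, suppose $\bigcup Z_i$ is Zariski-dense in an \emph{irreducible} $V$; by Conjecture~\ref{conj:AO-bis}, $\ol{\bigcup Z_i}^{Zar}=V$ is a finite union of special subvarieties, and since $V$ is irreducible it must equal one of them, so $V$ is special. For the converse \ref{conj:AO-ter}$\Rightarrow$\ref{conj:AO-bis}, let $V$ be arbitrary and let $W_1,\dots,W_r$ be the irreducible components of $\ol{\bigcup Z_i}^{Zar}$; I would apply Conjecture~\ref{conj:AO-ter} to each $W_j$ in place of $V$. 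The point is that the special subvarieties contained in $W_j$ include all the $Z_i$ that lie in $W_j$, and these are Zariski-dense in $W_j$ by construction (each component of the closure of a union is the closure of the sub-union of pieces contained in it, using that the $Z_i$ are irreducible so each is contained in a single $W_j$). Therefore each $W_j$ is special, and $\ol{\bigcup Z_i}^{Zar}=W_1\cup\cdots\cup W_r$ is a finite union of special subvarieties, which is Conjecture~\ref{conj:AO-bis}.

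The main obstacle, and the only place where care is genuinely needed, is the bookkeeping in the last step: one must check that when passing to an irreducible component $W_j$ of the closure, the family of special subvarieties contained in $W_j$ really is Zariski-dense in $W_j$ — this requires knowing that $\bigcup\{Z_i : Z_i\subseteq W_j\}$ is dense in $W_j$, which follows because each $Z_i$ is irreducible (hence lies in some single component) and $W_j$ is by definition the closure of the union of those $Z_i$ it contains together with possibly others, so one argues $W_j \subseteq \ol{\bigcup\{Z_i : Z_i \subseteq W_j\}}^{Zar} \subseteq W_j$. Everything else is formal manipulation of Zariski closures and the density of special points in special subvarieties; no deep input beyond the cited André--Oort theorem and the standard CM-density fact is required.
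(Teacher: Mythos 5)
Your proposal is correct and follows exactly the route the paper takes: the paper observes that Conjecture~\ref{conj:AO} is equivalent to Conjecture~\ref{conj:AO-bis} because special points are dense in special subvarieties, and then notes that Conjecture~\ref{conj:AO-bis} follows from applying Conjecture~\ref{conj:AO-ter} to the irreducible components of~$\ol{\bigcup Z_i}^{Zar}$, while the converse is immediate. You have simply fleshed out the bookkeeping (in particular the observation that each irreducible~$Z_i$ lies in a single component~$W_j$, so that~$W_j=\ol{\bigcup\{Z_i : Z_i\subseteq W_j\}}^{Zar}$), which the paper leaves implicit; no substantive difference in approach.
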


We will formulate in~\S\ref{sec:hybrid} a ``hybrid conjecture'' (Conjecture~\ref{conj:AO+APZ},\ref{conj:AO+APZ-bis},\ref{conj:AO+APZ-ter}) which is a common generalisation of both the André-Oort conjecture and the generalised André-Pink-Zannier conjecture. 

We will see in~\S\ref{sec:relation to ZP} that the hybrid conjecture is a consequences of Zilber-Pink conjectures on atypical intersections in Shimura varieties, which we now formulate.
\begin{definition}\label{def:atypical}
Let~$S=\Gamma\sous X^+$ be an arithmetic variety and let~$Z,V\subseteq S$ be two algebraic subvarieties.
An irreducible component~$C$ of~$Z\cap V$ is said to be an \emph{atypical intersection of~$Z$ and~$V$ with respect to~$S$} if
\[
\codim_S(C)< \codim_S(Z)+\codim_S(V)
\]
or, equivalently,
\[
\dim(C)> \dim(Z)+\dim(V)-\dim(S).
\]
We say that~$C$ is a \emph{typical intersection of~$Z$ and~$V$ with respect to~$S$} if~$\codim_S(C)= \codim_S(Z)+\codim_S(V)$.
\end{definition}
We refer to~\cite{P} for more details on the following conjecture.
\begin{conjecture}[Zilber-Pink conjecture]\label{conj:roro-ZP}
Let~$V$ be a proper algebraic subvariety of an arithmetic variety~$\Gamma\sous X^+$, and let~$(Z_n)_{n\in\Z_{\geq0}}$ be a sequence of special subvarieties of~$S$ such that, for each~$n\in \Z_{\geq0}$ there exists a component~$W_n$ of~$V\cap Z_n$ which is an atypical intersection of~$Z_n$ and~$V$ in~$\Gamma\sous X^+$.

Then~$\bigcup W_n$ is contained in a finite union of proper special subvarieties (i.e is not Hodge generic).
\end{conjecture}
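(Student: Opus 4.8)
The plan is to run the Pila--Zannier strategy; as this is the Zilber--Pink conjecture itself, I describe the method one would follow and indicate where it stalls in full generality. First I would make the standard preliminary reductions. By Noetherianity the conclusion is equivalent to the assertion that $V$ has only finitely many \emph{maximal atypical subvarieties} — atypical components of $V\cap Z$ with $Z$ special that are not contained in a strictly larger such component — so it suffices to bound these. We may further assume $V$ is Hodge generic in $S=\Gamma\sous X^+$ and that $S$ is the smallest special subvariety containing $V$, since otherwise we replace $S$ by the arithmetic variety attached to that smallest special subvariety and induct on $\dim S$, using that components of intersections of special subvarieties are special.

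Next comes the functional-transcendence input. Fix the uniformisation $\pi\colon X^+\to S$ and a semialgebraic fundamental set $\mathfrak{F}$ for $\Gamma$; by Klingler--Ullmo--Yafaev, $\pi|_{\mathfrak{F}}$ is definable in $\RR_{\mathrm{an},\exp}$, so $\widetilde{V}:=\pi^{-1}(V)\cap\mathfrak{F}$ is a definable set. Each special subvariety $Z_n$ pulls back to a union of $\Gamma$-translates of sets $\mathbf{H}_n(\RR)^+\cdot y$ for suitable $\QQ$-subgroups $\mathbf{H}_n$ of $G$, and the atypicality of the component $W_n$ of $V\cap Z_n$ becomes an unlikely incidence, in $X^+$, between $\widetilde{V}$ and the algebraic block cut out by $\mathbf{H}_n$ inside the compact dual of $X^+$. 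The Ax--Schanuel theorem for Shimura varieties of Mok--Pila--Tsimerman controls precisely such incidences: modulo the proper closed locus where they persist analytically — which is handled separately, by descending induction on dimension through weakly special subvarieties — it reduces the problem to finitely many $G(\ol{\QQ})$-conjugacy classes of the $\mathbf{H}_n$, organised into a definable family of blocks.

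The counting step then proceeds as usual. To each $Z_n$ in one of these families (equivalently, to $W_n$) one attaches a point of bounded degree in a suitable parameter variety whose height is polynomially bounded by the ``complexity'' of $Z_n$ (essentially the covolume of the lattice defining $\mathbf{H}_n$); the Pila--Wilkie counting theorem applied to $\widetilde{V}$ then gives an \emph{upper} bound, polynomial in this height, for the number of such points lying on $\widetilde{V}$ but off the connected semialgebraic curves contained in it, and those curves are in turn controlled by Ax--Schanuel.

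The main obstacle — and the reason Conjecture~\ref{conj:roro-ZP} remains a conjecture at this level of generality — is the matching arithmetic \emph{lower} bound: one needs the Galois orbit of the datum defining $Z_n$ (the discriminant of its reflex field, or the height of a $G(\ol{\QQ})$-conjugating element) to grow at least like a fixed positive power of the complexity of $Z_n$. Such ``large Galois orbit'' estimates are available only in restricted settings — via Masser--Wüstholz isogeny bounds and the averaged Colmez formula in the abelian-type case, as in the known proofs of André--Oort and of André--Pink--Zannier for Shimura varieties of abelian type — and establishing them in general requires genuinely new arithmetic input. Granting the lower bound, comparison with the Pila--Wilkie upper bound forces the complexities of the atypical $Z_n$ to be bounded; the finitely many special subvarieties of bounded complexity are then absorbed, the weakly optimal locus is disposed of by the inductive hypothesis, and one concludes that $\bigcup W_n$ is contained in a finite union of proper special subvarieties.
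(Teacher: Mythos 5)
The statement you were asked about is Conjecture~\ref{conj:roro-ZP}, which the paper does not prove: it is recorded as an open conjecture, with \cite{P} cited for background, and it is used as a \emph{hypothesis} in \S\ref{sec:relation to ZP} (the authors deduce their Hybrid Conjecture~\ref{conj:AO+APZ} from it, not the other way around). So there is no proof in the paper to compare yours against. Your write-up is in fact an honest assessment rather than a proof: you correctly identify that this is the Zilber--Pink conjecture itself, you sketch the standard Pila--Zannier pipeline (definability of the uniformisation on a fundamental set, Ax--Schanuel for Shimura varieties to reduce to finitely many definable families of semialgebraic blocks, Pila--Wilkie for the polynomial upper bound, induction through the weakly optimal locus), and you correctly name the missing ingredient — lower bounds for Galois orbits of the data defining the special subvarieties $Z_n$, known only in restricted settings via Masser--Wüstholz isogeny estimates and averaged Colmez / height bounds in the abelian-type case. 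That diagnosis matches the consensus in the field. The only thing to flag is that this should not be presented under the heading ``proof'': the appropriate reading of the task is that the statement is a conjecture, the paper treats it as such, and your role is to recognise that (which you did) rather than to supply an argument the community does not currently possess.
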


\subsubsection*{Remarks} We make the following remarks.
\begin{enumerate}
\item If~$V$ is contained in a proper special subvariety of~$S$, then the conclusion is obvious.
\item Conjecture~\ref{conj:roro-ZP} implies that: If~$V$ is not contained in a proper special subvariety of~$S$, then
\[
\text{$\bigcup W_n$ is not Zariski dense in~$V$.}
\]
\item After extracting a subsequence we can assume that~$\dim(Z_n)$ and~$\dim(W_n)$ are independent of~$n$.
We may also assume that~$V$ is not contained in a proper special subvariety of~$S$.
\item In the case~$\dim(Z_n)=\dim(W_n)$ we have an atypical intersection if and only if~$Z_n\subseteq V$ and~$V\neq S$.
Thus, the case~$\dim(Z_n)=\dim(W_n)$ of Conjecture~\ref{conj:AO-ter} for~$V$ is equivalent to Conjecture~\ref{conj:roro-ZP} for~$V$.
\item When~$V$ is a hypersurface of~$\Gamma\sous X^+$, every atypical intersection satisfies~$\codim_{Z_n}(W_n)<\codim_S(V)=1$. Thus the André-Oort conjecture implies the case~$\dim(V)=\dim(\Gamma\sous X^+)-1$ of~ Conjecture~\ref{conj:roro-ZP}.
\end{enumerate}

\section{Hybrid Hecke orbits and the Hybrid conjecture}\label{sec:hybrid}

For a Shimura datum~$(G,X)$ and a point~$x'\in X$, we denote by~$(M_{x'},X_{x'})$ the Mumford-Tate datum of~$x'$ as in Definition~\ref{def:MT}. We denote by~$Z(M)$ the centre of an algebraic group~$M$ and~$M^{ad}:=M/Z(M)$ the associated adjoint group. 

In the following definition, there is no need to assume that~$(M,X_M)$ is a subdatum of~$(G,X)$.
\begin{definition}\label{def:orbite hybride}
 Let~$(G,X)$ and~$(M,X_M)$ be Shimura data and let~$x\in X_M$ be a Hodge generic point. The \emph{hybrid Hecke orbit of~$(M,X_M,x)$ in~$X$} is
\begin{multline}\label{definition sigma hybride}
\Sigma_X(M,X_M,x)=\{x'\in X|\exists~\phi\in \Hom(M^{ad},M^{ad}_{x'}),
\phi(x^{ad})=x'^{ad}\}.
\end{multline}
For an arithmetic variety~$\Gamma\sous X^+$, we write
\[
\Sigma_{\Gamma\sous X^+}(M,X_M,x):=\Gamma\sous (\Sigma_X(M,X_M,x)\cap X^+).
\]
\end{definition}
\subsubsection*{Remarks}\label{rem:hybrid}
We note the following important properties:
\begin{enumerate}
\item \label{rem:hybrid0} 
Every~$\phi\in \Hom(M^{ad},M^{ad}_{x'})$ such that~$\phi(x^{ad})=x'^{ad}$ is an epimorphism. This follows from the fact that
the Mumford-Tate group of~$\phi(x^{ad})=x'^{ad}$ is~$\phi(M^{ad})=M^{ad}_{x'}$.
\item \label{rem:hybrid1}
If~$x'\in X$ is a special point, we have~$M^{ad}_{x'}=\{1\}$, and we deduce~$x'\in \Sigma_X(M,X_M,x)$.
Indeed, we can take~$\phi$ to be the constant morphism~$\phi:M^{ad}\to \{1\}$.
\item \label{rem:hybrid2}
If~$x$ is a special point, we have~$M^{ad}=\{1\}$, and we have~$x'\in \Sigma_X(M,X_M,x)$ if and only if~$x'$ is a special point of~$X$. This is because~$M^{ad}_{x'}=\phi(M^{ad})=\{1\}$.
\item \label{rem:hybrid3}
\emph{Transitivity:} If~$x'\in \Sigma_X(M,X_M,x)$, then~$\Sigma_{X'}(M_{x'},X_{x'},x')\subseteq \Sigma_{X'}(M,X_M,x)$ for every Shimura datum~$(G',X')$.
\item \label{rem:hybrid4}
If~$x''\in \cH(x')$, then we have an epimorphism~$M_{x'}\to M_{x''}$ such that~$\phi(x')=x''$. We deduce that
\begin{equation}\label{eq:rem:hybrid4}
\forall x'\in \Sigma_X(M,X_M,x),\cH(x')\subseteq \Sigma_X(M_{x'},X_{x'},x').
\end{equation}
\end{enumerate}
It follows from~\eqref{eq:rem:hybrid4} and the Proposition~\ref{prop:countable} below
that~$\Sigma_X(M,X_M,x)$ is a countable union of generalised Hecke orbits which contains all special points of~$X$.

\begin{proposition}\label{prop:countable}
The set~$\Sigma_X(M,X_M,x)$ is countable.
\end{proposition}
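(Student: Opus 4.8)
The strategy is to exhibit $\Sigma_X(M,X_M,x)$ as a subset of a countable union of countable sets. First I would observe that for a fixed Shimura datum $(G,X)$, the set of Shimura subdata $(H,X_H)\leq(G,X)$ is countable: each is determined by the $\QQ$-algebraic subgroup $H\leq G$ together with a connected component of $X_H$, and there are only countably many $\QQ$-algebraic subgroups of $G$ (a standard fact, since such a subgroup is cut out by finitely many polynomials with $\QQ$-coefficients), while $X_H$ has finitely many components. In particular the set of Mumford--Tate data $(M_{x'},X_{x'})$ arising from points $x'\in X$, hence the set of adjoint groups $M^{ad}_{x'}$ up to the relevant identifications, ranges over a countable family.

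Next, by Remark~\ref{rem:hybrid0} every $\phi\in\Hom(M^{ad},M^{ad}_{x'})$ occurring in the definition is an epimorphism, so each such $\phi$ factors $M^{ad}$ through a quotient by one of its $\QQ$-normal subgroups; since $M^{ad}$ (being a quotient of $M$) has only countably many $\QQ$-algebraic normal subgroups, and $\Hom(M^{ad},N)(\QQ)$ is countable for any fixed target $N$ (morphisms of $\QQ$-algebraic groups are defined by polynomials with $\QQ$-coefficients), the total set of relevant morphisms $\phi$ is countable. Then I would stratify: for each such $\phi$ with target some $N$, and each Shimura subdatum realizing $N$ as an $M^{ad}_{x'}$, the condition $\phi(x^{ad})=x'^{ad}$ together with $x'\in X$ determines $x'^{ad}$ uniquely as $\phi(x^{ad})$, hence pins down $x'$ up to the finite fibres of $X\to X^{ad}$ (equivalently, up to the action of the finite group $Z(G)(\RR)$-type ambiguity; in any case a single point of $X^{ad}$ has only finitely many preimages in $X$ lying over it, since $G\to G^{ad}$ has finite kernel and one is comparing fixed connected components). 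Therefore $\Sigma_X(M,X_M,x)$ is covered by a countable family of finite sets, and is countable.

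Alternatively, and perhaps more cleanly, one can invoke the structure already recorded in the excerpt: it was noted just before the Proposition that $\Sigma_X(M,X_M,x)$ is a countable union of generalised Hecke orbits (this follows from Remark~\eqref{eq:rem:hybrid4} once one knows there are countably many $x'$ up to $\cH$-equivalence, i.e. countably many pointed Mumford--Tate data up to conjugacy), so it suffices to know that each generalised Hecke orbit $\cH_X(x_0)=X\cap\{\phi\circ x_0:\phi\in\Hom(M,G)(\QQ)\}$ is countable — which is immediate since $\Hom(M,G)(\QQ)$ is countable. I would likely present this second route as the main line and use the first paragraph's observations (countability of subdata, of $\QQ$-morphisms) as the lemmas feeding into it.

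The main obstacle is purely bookkeeping rather than conceptual: making precise and citing correctly the countability of the set of $\QQ$-algebraic subgroups of a fixed $\QQ$-algebraic group, and the countability of $\Hom(M,G)(\QQ)$; and checking that the identification of targets $M^{ad}_{x'}$ across varying $x'$ does not secretly reintroduce a continuum (it does not, because conjugacy is by the countable-modulo-$\RR$-points issue is circumvented by passing to adjoint data and using that weakly special / Mumford--Tate data form a countable set). Once these standard facts are in place the Proposition follows formally.
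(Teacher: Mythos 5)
Your second paragraph is, in substance, the paper's proof. The paper parametrises by pairs consisting of a Shimura subdatum $(H,X_H)\leq(G,X)$ and a morphism of Shimura data $\phi:(H,X_H)\to(M_J,X_J)$ onto an adjoint quotient $M_J=\prod_{j\in J}M_j$ of $M^{ad}$, and then notes that when $\phi$ induces an isomorphism $H^{ad}\simeq M_J$ the map $X_H\to X_J$ is injective, so each pair contributes at most one $x'$ with $\phi(x')=p_J(x)$. Your count over (epimorphism $\phi:M^{ad}\to N$, Shimura subdatum with $H^{ad}\simeq N$), together with the finiteness of the fibres of $X_H\to X_H^{ad}$, is the same argument with the arrow reversed.

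One caution: the ``alternative, cleaner'' route in your third paragraph is circular as phrased, and you should not present it as the main line. That $\Sigma_X(M,X_M,x)$ is a union of Hecke orbits follows from remarks~\ref{rem:hybrid3} and~\ref{rem:hybrid4} after Definition~\ref{def:orbite hybride}, but a priori the index set of that union is $\Sigma_X(M,X_M,x)$ itself; the assertion that only countably many Hecke orbits occur is precisely the content of Proposition~\ref{prop:countable}. Indeed the paper makes the ``countable union of generalised Hecke orbits'' deduction \emph{from} the Proposition, not towards it, and your parenthetical ``once one knows there are countably many $x'$ up to $\cH$-equivalence, i.e.\ countably many pointed Mumford--Tate data'' quietly carries the whole burden --- its justification is exactly the first route, since the point of the pointed datum a priori ranges over a continuum and is only constrained by the equation $\phi(x^{ad})=x'^{ad}$.
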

\begin{proof}Consider the semisimple adjoint group~$M^{ad}=\prod_{i\in I}M_i$ decomposed into a product of~$\Q$-simple factors. Correspondingly, write~$X^{ad}=\prod_{i\in I}X_i$. For every epimorphism~$\phi:M^{ad}\to M'$, there exists~$J\subseteq I$ and an isomorphism~$\iota:M_J:=\prod_{i\in J} M_j\to M'$ such that~$\phi=\iota\circ p_J:M^{ad}\xrightarrow{p_J} M_J\xrightarrow{\iota} M'$, where~$p_J$ denotes the natural quotient map. 

This implies that for~$x'\in \Sigma_X(M,X_M,x)$, there exists a subset~$J\subseteq I$ and an isomorphism of Shimura data~$(M^{ad}_{x'},X^{ad}_{x'})\to (M_J,X_J,p_J(x^{ad}))$ mapping~$p_J(x^{ad})$ to~$x'^{ad}$, where~$X_J:=\prod_{i\in J} X_i$.

There are countably many Shimura subdata~$(H,X_H)\leq (G,X)$, and for each Shimura datum~$(H,X_H)$ and every~$J\subseteq I$, there are countably many morphisms of Shimura data~$\phi:(H,X_H)\to (M_J,X_J)$. When~$\phi$ is an epimorphism inducing~$H^{ad}\simeq M_J$, the map~$X_H\to X_J$ is an injection, and there is at most one~$x'\in X_H$ such that~$\phi(x')=p_J(x)$.

Note that every~$x'\in \Sigma_X(M,X_M,x)$ is of this form for~$(H,X_H)=(M_{x'},X_{x'})\leq (G,X)$, for some~$J\subseteq I$ and some~$\phi:(H,X_H)\to (M_J,X_J)$. Therefore, there are at most countably many~$x'\in \Sigma_X(M,X_M,x)$.

\end{proof}

The following is a common generalisation of André-Oort and André-Pink-Zannier conjectures.
\begin{conjecture}[Hybrid conjecture]\label{conj:AO+APZ}
Let~$V$ be an irreducible algebraic subvariety of an arithmetic variety~$\Gamma\sous X^+$,
let~$(M,X_M)$ be a Shimura datum and let~$x\in X_M$ be a Hodge generic point.

Then the Zariski closure of~$\Sigma_{\Gamma\sous X^+}(M,X_M,x)\cap V$ is a finite union of weakly special subvarieties of~$\Gamma\sous X^+$.
\end{conjecture}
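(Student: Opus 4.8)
\emph{Proof sketch, assuming the Zilber--Pink Conjecture~\ref{conj:roro-ZP}.} The plan is to transport the statement to the product Shimura variety attached to $(M\times G,\,X_M\times X)$ and apply Zilber--Pink there, along the lines announced in~\S\ref{sec:relation to ZP}. Write $\Sigma:=\Sigma_{\Gamma\backslash X^+}(M,X_M,x)$. First we dispose of degenerate cases: if $x$ is special, then $M^{ad}=\{1\}$ and, by Remark~\ref{rem:hybrid2}, $\Sigma$ is the set of special points of $\Gamma\backslash X^+$, so Conjecture~\ref{conj:AO+APZ} becomes the (proven) André--Oort Conjecture~\ref{conj:AO}; hence we may assume $x$ is not special, so $M^{ad}\neq\{1\}$, and --- since the adjoint Mumford--Tate group of a non-special point carries no compact $\QQ$-simple factor --- $d_M:=\dim X_M^+\geq1$. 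We argue by induction on $d:=\dim(\Gamma\backslash X^+)$, the case $d=0$ being trivial. Since the hybrid Hecke orbit is defined purely in terms of Mumford--Tate data, it restricts correctly to special subvarieties, so if $V$ is contained in a proper special subvariety we are done by the inductive hypothesis; and if $V=\Gamma\backslash X^+$ the claim is immediate, because $\Sigma$ contains the Zariski-dense set of special points (Remark~\ref{rem:hybrid1}) and $\Gamma\backslash X^+$ is weakly special. Thus we may assume $V$ proper and Hodge generic; and, arguing by contradiction and replacing $V$ by an irreducible component $V_0$ of $\overline{\Sigma\cap V}^{Zar}$ that is not weakly special (such a $V_0$ exists if the conclusion fails; then $\dim V_0<d$, $\overline{\Sigma\cap V_0}^{Zar}=V_0$, and if $V_0$ were not Hodge generic the inductive hypothesis applied to its special closure would already give a contradiction), we may finally assume $V$ itself is proper, Hodge generic, and not weakly special.

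Put $[x]:=\Gamma_M\cdot x$ (after replacing $X_M$ by the component through $x$), $S:=\Gamma_M\backslash X_M^+\times\Gamma\backslash X^+$, $W:=\{[x]\}\times\Gamma\backslash X^+$ and $V_S:=\{[x]\}\times V$. The geometric heart of the matter, a refinement of the equivalence of~\S\ref{sec:relation to ZP} between $WS_{\Gamma\backslash X^+}(M,X_M,x)$ and components of $W\cap Z$, is the claim that for $x'\in X^+$ one has $[x']\in\Sigma$ \emph{if and only if} the smallest special subvariety $Z_{x'}$ of $S$ through $([x],[x'])$ has dimension exactly $d_M$. The argument: $Z_{x'}$ is the image of the Mumford--Tate datum of $(x,x')$, so $\dim Z_{x'}$ depends only on $M^{ad}_{(x,x')}\subseteq M^{ad}\times M^{ad}_{x'}$, which surjects onto $M^{ad}$ (as $x$ is Hodge generic) and onto $M^{ad}_{x'}$; being an adjoint Mumford--Tate group it has no compact $\QQ$-simple factor, so the surjection onto $M^{ad}$ is an isomorphism exactly when $M^{ad}_{(x,x')}$ is the graph of an epimorphism $\phi\colon M^{ad}\twoheadrightarrow M^{ad}_{x'}$, and then comparing Hodge cocharacters through the two projections yields $\phi(x^{ad})=x'^{ad}$; conversely such a $\phi$ realises $M^{ad}_{(x,x')}$ as $\mathrm{graph}(\phi)\cong M^{ad}$, whence $\dim Z_{x'}=\dim X_M^+=d_M$.

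Granting this, for each $[x']\in\Sigma\cap V$ the variety $Z_{x'}$ is special of dimension $d_M$, and the component $W_{x'}$ of $V_S\cap Z_{x'}$ through $([x],[x'])$ satisfies $\dim W_{x'}\geq 0>\dim V+\dim Z_{x'}-\dim S$ because $\dim V<d$; thus $W_{x'}$ is an atypical component of $V_S\cap Z_{x'}$ in $S$. Since $d_M\geq1$, $V_S$ is a proper subvariety of $S$, and the family $\{Z_{x'} : [x']\in\Sigma\cap V\}$ is a countable family of special subvarieties by Proposition~\ref{prop:countable}, so Conjecture~\ref{conj:roro-ZP} applies and produces proper special subvarieties $\mathcal{T}_1,\dots,\mathcal{T}_r$ of $S$ with $\bigcup_{x'}W_{x'}\subseteq\bigcup_i\mathcal{T}_i$. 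As $([x],[x'])\in W_{x'}$, taking Zariski closures gives $V_S=\{[x]\}\times V\subseteq\bigcup_i\mathcal{T}_i$, hence $V_S\subseteq\mathcal{T}_{i_0}$ for some $i_0$ by irreducibility. Now $V_S\subseteq\mathcal{T}_{i_0}\cap W$; since a connected component of the intersection of a special and a weakly special subvariety is weakly special, and the projection $\mathrm{pr}_2\colon S\to\Gamma\backslash X^+$ sends weakly special subvarieties to (finite unions of) weakly special subvarieties, $V$ is contained in a weakly special subvariety $V''$ of $\Gamma\backslash X^+$ that is a component of $\mathrm{pr}_2(\mathcal{T}_{i_0}\cap W)$. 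If $\dim V''=d$ then $V''=\Gamma\backslash X^+$ and hence $W\subseteq\mathcal{T}_{i_0}$; but the smallest special subvariety of $S$ containing $W$ is $S$ itself --- since $[x]$ is Hodge generic in $\Gamma_M\backslash X_M^+$, the Mumford--Tate groups of the points $([x],b)$ with $b$ generic in $\Gamma\backslash X^+$ fill out $M\times G$ --- contradicting that $\mathcal{T}_{i_0}$ is proper; so $\dim V''<d$. The inductive hypothesis, applied inside $V''$, now shows that $\overline{\Sigma\cap V}^{Zar}=V$ is a finite union of weakly special subvarieties of $\Gamma\backslash X^+$, hence --- being irreducible --- a single weakly special subvariety, contradicting our choice of $V$. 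This contradiction completes the induction.

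We expect the main obstacle to be the geometric characterisation of the second paragraph together with its seamless incorporation into the induction: establishing that hybrid-Hecke membership is equivalent to the smallest special subvariety of $S$ being of the minimal possible dimension $d_M$ over $[x]$ --- so that the atypicality inequality holds uniformly --- and the closing point that the weakly special subvarieties delivered by Zilber--Pink genuinely drop dimension, which rests on $S$ being the smallest special subvariety containing $W$. The remainder is the by-now-standard formalism for deducing finiteness statements from the Zilber--Pink conjecture.
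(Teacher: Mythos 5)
Your proposal is a correct conditional proof (assuming Zilber--Pink), and it reaches the same destination as the paper's \S\ref{sec:relation to ZP}, but the organisation is genuinely different and in some ways more self-contained. The paper develops a dictionary: Theorem~\ref{thm:special intersections are typical} and Corollary~\ref{cor:special intersections are typical} establish that special $\cap$ weakly special is always typical; Theorem~\ref{thm:weakly inter special} characterises $WS_{\Gamma\sous X^+}(M,X_M,x)$ as exactly the components of $W\cap Z$ for $Z$ special in $S$; Theorem~\ref{thm: intersection atypiques are the same} and Corollary~\ref{coro:equivalence-ZP} then identify the full atypical locus $ZP_{V,S}$ in $S$ with $\{\Gamma_M\cdot x\}\times ZP_{V',W}$, and Lemma~\ref{lem:7.5} closes the loop, with the inductive deduction of the Hybrid conjecture from Conjecture~\ref{conj:truc} left implicit. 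You instead bypass the structure theorems and work pointwise: for each $[x']\in\Sigma\cap V$ you exhibit the smallest special $Z_{x'}\subseteq S$ through $([x],[x'])$, prove directly (via the no-compact-$\Q$-factor property of adjoint Mumford--Tate groups, which is also what makes~\cite[Lemme 3.3]{U} applicable in the paper's proof of Theorem~\ref{thm:weakly inter special}) that $\dim Z_{x'}=\dim X_M^+$ precisely when $[x']\in\Sigma$, and observe this forces the zero-dimensional component through $([x],[x'])$ to be atypical whenever $V$ is proper; you then run Zilber--Pink once, land in a proper weakly special $V''$, and finish by an explicit induction on $\dim(\Gamma\sous X^+)$. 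What you gain is a short, closed argument that makes the induction --- which the paper leaves tacit after Lemma~\ref{lem:7.5} --- fully visible, at the price of establishing only the pointwise version of the dictionary rather than the sharper equivalence of Corollary~\ref{coro:equivalence-ZP}. One thing worth flagging for a complete write-up: both of the inductive reductions (to a proper special $Z$, and later to the weakly special $V''$) quietly use that the hybrid Hecke orbit $\Sigma$ and the class $WS(M,X_M,x)$ restrict compatibly to sub-arithmetic-varieties; this is exactly the functoriality packaged in Corollary~\ref{Functoriality}, and in your sketch it is assumed rather than checked.
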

If~$x$ is a special point, then the set~$\Sigma_{\Gamma\sous X^+}(M,X_M,x)$ of Conjecture~\ref{conj:AO+APZ} is the same as the set~$\Sigma$ of Conjecture~\ref{conj:AO}, and every weakly special subvariety in the conclusion of Conjecture~\ref{conj:AO+APZ} will be special by Definition~\ref{def:special}. Thus, if~$x$ a special point, then Conjecture~\ref{conj:AO+APZ} is equivalent to Conjecture~\ref{conj:AO}. Since the André-Oort conjecture has been proven, Conjecture~\ref{conj:AO+APZ} is true when~$x$ is a special point.

For a non necessarily special~$x\in X$, as we have~$\cH(x)\subseteq \Sigma_X(M,X_M,x)$, Conjecture~\ref{conj:APZ} is a consequence of Conjecture~\ref{conj:AO+APZ}.

Recall that for any~$x\in X$, the set~$\Sigma_X(M,X_M,x)$ contains all special points of~$X$. Thus, for any~$x\in X$,
Conjecture~\ref{conj:AO+APZ} for~$x\in X$ implies the André-Oort Conjecture~\ref{conj:AO}.

Let~$WS_X$ be the set of weakly special subvarieties (possibly of dimension~$0$) of~$X$ and let us define
\[
WS_X(M,X_M,x)=\{W\in WS_X| W\cap \Sigma_X(M,X_M,x)\neq \emptyset\}.
\]
Let
\[
WS_{\Gamma\sous X^+}(M,X_M,x)=\{\Gamma\sous \Gamma W|W\in WS_X(M,X_M,x), W\subseteq X^+\}.
\]
Thus~$WS_{\Gamma\sous X^+}(M,X_M,x)$ is the set of weakly special subvarieties~$W\subseteq \Gamma\sous X^+$
such that~$W\cap \Gamma\sous \Sigma_X(M,X_M,x)\neq \emptyset$.

\begin{conjecture}[Second version]\label{conj:AO+APZ-bis}
Let~$V$ be an irreducible algebraic subvariety of an arithmetic variety~$\Gamma\sous X^+$,
let~$(M,X_M)$ be a Shimura datum and let~$x\in X_M$ be a Hodge generic point.

Let~$\{Z_1;\ldots\}$ be the set of all subvarieties~$Z_i\in WS_{\Gamma\sous X^+}(M,X_M,x)$ such that~$Z_i\subseteq V$.

Then there exists~$W_1,\ldots,W_k\in WS_{\Gamma\sous X^{+}}(M,X_M,x)$ such that the Zariski closure of~$\bigcup Z_i$ is~$W_1\cup\ldots\cup W_k$. In other words,~$\{Z_1;\ldots\}$ contains finitely many maximal elements with respect to inclusion.
\end{conjecture}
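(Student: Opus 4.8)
The plan is to prove that Conjecture~\ref{conj:AO+APZ-bis} is equivalent to Conjecture~\ref{conj:AO+APZ}; since, as noted above, the latter holds when~$x$ is special, this also settles Conjecture~\ref{conj:AO+APZ-bis} in that case. The one real ingredient is the following \emph{Density Lemma}: for every~$Z\in WS_{\Gamma\sous X^+}(M,X_M,x)$ the set~$Z\cap\Sigma_{\Gamma\sous X^+}(M,X_M,x)$ is Zariski-dense in~$Z$; equivalently, any weakly special subvariety which meets~$\Sigma_{\Gamma\sous X^+}(M,X_M,x)$ meets it densely. Granting this, one obtains~$\overline{\bigcup Z_i}^{Zar}=\overline{V\cap\Sigma_{\Gamma\sous X^+}(M,X_M,x)}^{Zar}$: the inclusion~$\supseteq$ holds because every point of~$V\cap\Sigma_{\Gamma\sous X^+}(M,X_M,x)$ is a~$0$-dimensional member of~$\{Z_1;\ldots\}$, and~$\subseteq$ holds because the Density Lemma gives~$Z_i=\overline{Z_i\cap\Sigma_{\Gamma\sous X^+}(M,X_M,x)}^{Zar}\subseteq\overline{V\cap\Sigma_{\Gamma\sous X^+}(M,X_M,x)}^{Zar}$ for each~$i$.

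With this identity the equivalence is formal. Conjecture~\ref{conj:AO+APZ} says that the common Zariski closure above is a finite union of weakly special subvarieties; let~$W_1,\dots,W_k$ be its irreducible components. Each~$W_j$ is weakly special, contained in~$V$, and --- being an irreducible component of the Zariski closure of~$V\cap\Sigma_{\Gamma\sous X^+}(M,X_M,x)$ --- meets~$V\cap\Sigma_{\Gamma\sous X^+}(M,X_M,x)$ densely; in particular~$W_j\in WS_{\Gamma\sous X^+}(M,X_M,x)$, so~$W_j$ is one of the~$Z_i$. Conversely, any~$Z_i$ is irreducible and contained in~$W_1\cup\dots\cup W_k$, hence in some~$W_j$. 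Therefore the~$W_j$ are exactly the maximal elements of~$\{Z_1;\ldots\}$ and~$\overline{\bigcup Z_i}^{Zar}=W_1\cup\dots\cup W_k$, which is Conjecture~\ref{conj:AO+APZ-bis}; reading the same chain backwards recovers Conjecture~\ref{conj:AO+APZ} from Conjecture~\ref{conj:AO+APZ-bis}.

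It remains to prove the Density Lemma. If~$Z$ is special the claim is immediate: a special subvariety carries a Zariski-dense set of special points, all of which lie in~$\Sigma_{\Gamma\sous X^+}(M,X_M,x)$ by Remark~\ref{rem:hybrid1}. Assume then that~$Z$ is not special, hence contains no special point; fix~$[z]\in Z\cap\Sigma_{\Gamma\sous X^+}(M,X_M,x)$ and a lift~$z\in\Sigma_X(M,X_M,x)\cap X^+$, which is not special. By transitivity (Remark~\ref{rem:hybrid3}) one has~$\Sigma_X(M_z,X_z,z)\subseteq\Sigma_X(M,X_M,x)$, so it suffices to treat the pointed Mumford-Tate datum~$(M_z,X_z,z)$; replacing~$(M,X_M,x)$ by it, we may assume~$x=z\in X^+$ and~$M=M_x$, with~$(M,X_M)\leq(G,X)$. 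Working with adjoint data via the isomorphism~$X^+\xrightarrow{\ \sim\ }X^{ad,+}$, let~$(N,X_N)\leq(G,X)$ be the smallest special subvariety containing~$Z$; then~$x$ factors through~$N$, so~$M\leq N$, and the Moonen-Ullmo description~\eqref{Eq-fs}, in its adjoint-product form, provides a~$\QQ$-decomposition~$N^{ad}=N_1\times N_2$ with~$X_N^{ad,+}=X_1^+\times X_2^+$ under which the lift of~$Z$ is~$X_1^+\times\{x_2^{ad}\}$, where~$x^{ad}=(x_1^{ad},x_2^{ad})$. For~$g\in N_1(\QQ)$ in the stabiliser of~$X_1^+$, let~$\phi_g\colon M^{ad}\to G^{ad}$ be the restriction to~$M^{ad}\leq N_1\times N_2$ of the~$\QQ$-automorphism~$(a,b)\mapsto(gag^{-1},b)$ of~$N_1\times N_2$. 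Then~$\phi_g$ is injective, hence an isomorphism onto the~$\QQ$-subgroup~$\phi_g(M^{ad})$; since~$\MT(x^{ad})=M^{ad}$ and~$\phi_g$ is~$\QQ$-rational, $\MT(\phi_g\circ x^{ad})=\phi_g(M^{ad})$, while~$\phi_g\circ x^{ad}=(g\cdot x_1^{ad},x_2^{ad})$ lies in the lift of~$Z$. Hence~$\phi_g$ exhibits the point~$x'\in X^+$ with~$x'^{ad}=(g\cdot x_1^{ad},x_2^{ad})$ as an element of~$\Sigma_X(M,X_M,x)$ whose image in~$\Gamma\sous X^+$ lies in~$Z$. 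Finally, by the real approximation theorem~$N_1(\QQ)$ is dense in~$N_1(\RR)$, so the points~$g\cdot x_1^{ad}$ are dense in~$X_1^+$ as~$g$ varies, and the corresponding points~$x'$ are therefore Zariski-dense in~$Z$. This proves the Density Lemma.

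I expect the Density Lemma to be the main obstacle, and within it the careful handling of the Moonen-Ullmo normal form of~$Z$ together with the identifications among~$X$, $X^{ad}$ and the sub- and quotient-data of~$N$, so as to ensure that the explicitly constructed~$\phi_g$ is genuinely an epimorphism~$M^{ad}\twoheadrightarrow M^{ad}_{x'}$ carrying~$x^{ad}$ to~$x'^{ad}$. The Mumford-Tate computation~$\MT(\phi_g\circ x^{ad})=\phi_g(M^{ad})$, the density via real approximation, the transitivity reduction, and the topological bookkeeping behind the two implications should all be routine.
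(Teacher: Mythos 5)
Your overall strategy matches the paper's: reduce the equivalence of Conjecture~\ref{conj:AO+APZ-bis} and Conjecture~\ref{conj:AO+APZ} to the density of~$\Sigma_{\Gamma\sous X^+}(M,X_M,x)\cap V$ in~$\bigcup Z_i$, together with the obvious containment, and then observe that the equivalence is formal. The difference is in how the density is established. The paper simply invokes~\cite[Lem.~3.3]{AD} for Archimedean density and moves on; you instead prove a self-contained \emph{Density Lemma}: that~$Z\cap\Sigma_{\Gamma\sous X^+}(M,X_M,x)$ is dense in every~$Z\in WS_{\Gamma\sous X^+}(M,X_M,x)$. Your argument is essentially sound: the special case is immediate from density of special points and Remark~\eqref{rem:hybrid1}; the non-special case reduces via transitivity (Remark~\eqref{rem:hybrid3}) to the pointed Mumford--Tate datum of a lift~$z$, puts~$Z$ in Moonen--Ullmo normal form inside its smallest special subvariety~$(N,X_N)$, and then uses conjugation by elements~$g\in N_1(\QQ)$ together with real/weak approximation to produce an Archimedean-dense family of points in the hybrid orbit lying on~$Z$. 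This buys you independence from the external reference and makes explicit that the density is a statement proved by the same ``rational orbit is dense'' mechanism that underlies all such lemmas. One technical point to tighten: you treat~$M^{ad}$ as a literal subgroup of~$N^{ad}=N_1\times N_2$, whereas the image of~$M$ in~$N^{ad}$ is~$M/(M\cap Z(N))$, not~$M^{ad}$; so~$\phi_g$ should first be defined on this image and then descended to adjoints to produce the morphism~$M^{ad}\to M^{ad}_{x'}$ required by Definition~\ref{def:orbite hybride}. You flagged this class of identification issue yourself, and it is indeed the only place where the write-up should be made more precise; the underlying argument survives.
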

Note that we have
\[
\Sigma_{\Gamma\sous X^+} (M, X_M , x) \cap  V\subseteq \bigcup \{Z\in WS_{\Gamma\sous X^+}(M,X_M,x)| Z\subseteq V\}
\]
and that, in view of \cite[Lem. 3.3]{AD} the left hand-side is dense in the right hand-side for the Archimedean topology.

It follows that Conjecture~\ref{conj:AO+APZ-bis} is equivalent to Conjecture~\ref{conj:AO+APZ}.

Conjecture~\ref{conj:AO+APZ-bis} for a given~$V$ is a consequence of Conjecture~\ref{conj:AO+APZ-ter} applied to irreducible components of~$\ol{\bigcup Z_i}^{Zar}$.
\begin{conjecture}[Third version]\label{conj:AO+APZ-ter}
Let~$V$ be an irreducible algebraic subvariety of an arithmetic variety~$\Gamma\sous X^+$,
let~$(M,X_M)$ be a Shimura datum and let~$x\in X_M$ be a Hodge generic point.


Assume that~$\bigcup \{Z\in WS_{\Gamma\sous X^+}(M,X_M,x)| Z\subseteq V\}$ is Zariski dense in~$W$. Then~$W$ is weakly special, and~$W$ belongs to~$WS_{\Gamma\sous X^+}(M,X_M,x)$.
\end{conjecture}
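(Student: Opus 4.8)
The plan is to show the three formulations of the hybrid conjecture are equivalent and, in the process, to verify the only non-routine claim hidden in Conjecture~\ref{conj:AO+APZ-ter}: that if the union of the weakly special subvarieties $Z\subseteq V$ lying in $WS_{\Gamma\sous X^+}(M,X_M,x)$ is Zariski dense in some irreducible $W$, then $W$ itself belongs to $WS_{\Gamma\sous X^+}(M,X_M,x)$. The implication $\ref{conj:AO+APZ}\Rightarrow\ref{conj:AO+APZ-bis}$ is essentially formal: by Conjecture~\ref{conj:AO+APZ} the Zariski closure of $\Sigma_{\Gamma\sous X^+}(M,X_M,x)\cap V$ is a finite union $W_1\cup\cdots\cup W_k$ of weakly special subvarieties; each $W_j$ meets $\Sigma_{\Gamma\sous X^+}(M,X_M,x)$ and is contained in $V$, so $W_j\in WS_{\Gamma\sous X^+}(M,X_M,x)$ and $W_j$ appears in the list $\{Z_1;\ldots\}$; conversely, by the displayed Archimedean-density statement (citing \cite[Lem.~3.3]{AD}), $\bigcup Z_i$ has the same Zariski closure as $\Sigma_{\Gamma\sous X^+}(M,X_M,x)\cap V$, so the $W_j$ are exactly the maximal elements. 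For the reverse, Conjecture~\ref{conj:AO+APZ-bis} gives finitely many maximal $W_1,\ldots,W_k$ whose union is $\overline{\bigcup Z_i}^{Zar}$; each $W_j\in WS_{\Gamma\sous X^+}(M,X_M,x)$, hence is weakly special, and using the density $\Sigma_{\Gamma\sous X^+}(M,X_M,x)\cap V\subseteq\bigcup\{Z\subseteq V\}$ together with the Archimedean density in the other direction, one recovers Conjecture~\ref{conj:AO+APZ}.

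For the equivalence with the third formulation, one direction is stated in the excerpt: applying Conjecture~\ref{conj:AO+APZ-ter} to each irreducible component $W^{(j)}$ of $\overline{\bigcup Z_i}^{Zar}$ — noting that $\bigcup\{Z\subseteq W^{(j)}\}$ is Zariski dense in $W^{(j)}$ by irreducibility and the definition of the Zariski closure — yields that each $W^{(j)}$ is weakly special and lies in $WS_{\Gamma\sous X^+}(M,X_M,x)$, giving the finitely many maximal elements of Conjecture~\ref{conj:AO+APZ-bis}. For the converse, suppose Conjecture~\ref{conj:AO+APZ-bis} holds and we are in the situation of Conjecture~\ref{conj:AO+APZ-ter}: $\bigcup\{Z\subseteq V\}$ is Zariski dense in an irreducible $W$. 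Replacing $V$ by $W$ (legitimate since every $Z\subseteq W$ is in particular a $Z\subseteq V$, and those $Z\subseteq W$ are Zariski dense in $W$), Conjecture~\ref{conj:AO+APZ-bis} produces finitely many maximal $W_1,\ldots,W_k\in WS_{\Gamma\sous X^+}(M,X_M,x)$ with $W_1\cup\cdots\cup W_k=\overline{\bigcup\{Z\subseteq W\}}^{Zar}=W$; since $W$ is irreducible, $W=W_j$ for some $j$, whence $W$ is weakly special and $W\in WS_{\Gamma\sous X^+}(M,X_M,x)$.

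The main obstacle — the one genuinely non-formal ingredient — is the membership $W\in WS_{\Gamma\sous X^+}(M,X_M,x)$, i.e.\ that $W\cap\Gamma\sous\Sigma_X(M,X_M,x)\neq\emptyset$. This does not follow purely from $W$ being a Zariski limit of such subvarieties, because $\Sigma_X(M,X_M,x)$ is only a countable union of generalised Hecke orbits, and the property ``meets $\Sigma$'' is not obviously stable under Zariski closure. The point is to exhibit an actual point of $\Sigma_X(M,X_M,x)$ on $W$: since each $Z_i\subseteq W$ satisfies $Z_i\cap\Gamma\sous\Sigma_X(M,X_M,x)\neq\emptyset$, pick $x_i\in\Sigma_X(M,X_M,x)$ mapping into $Z_i$; one then argues, using the functoriality/transitivity property Remark~\ref{rem:hybrid}(\ref{rem:hybrid3})--(\ref{rem:hybrid4}) together with the fact (to be extracted from the classification of epimorphisms of semisimple adjoint groups, as in the proof of Proposition~\ref{prop:countable}) that the ``hybrid type'' of $x_i$ — the subset $J\subseteq I$ of simple factors and the resulting Shimura subdatum — takes only finitely many values, so after passing to a subsequence all the $Z_i$ are weakly special of a common type compatible with $\Sigma$; then the generic point of $W$, being the Zariski limit of this coherent family, inherits the same type and a point of $W$ lies in $\Sigma_X(M,X_M,x)$. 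Making this step precise is the crux; everything else is bookkeeping about Zariski closures, irreducible components, and the two density statements.

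Finally, throughout one should record the reduction already noted in the excerpt for André--Oort: one may assume $V$ (or $W$) is not contained in a proper special subvariety, and the smallest special subvariety of $\Gamma\sous X^+$ containing $V$ may be taken to be $\Gamma\sous X^+$ itself, which streamlines the type-counting argument above.
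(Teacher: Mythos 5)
The statement under review is itself a \emph{conjecture}; the paper never proves it. What the paper does is record two reductions: that the third formulation, applied to the irreducible components of~$\ol{\bigcup Z_i}^{Zar}$, implies the second; and that the second is equivalent to Conjecture~\ref{conj:AO+APZ} via the inclusion
$\Sigma_{\Gamma\sous X^+}(M,X_M,x)\cap V\subseteq \bigcup\{Z\in WS_{\Gamma\sous X^+}(M,X_M,x)\mid Z\subseteq V\}$
together with the Archimedean density of the left-hand side in the right-hand side (quoted from \cite[Lem.~3.3]{AD}). Your bookkeeping for these implications (including the reverse direction $\text{bis}\Rightarrow\text{ter}$, which the paper states explicitly for the André--Oort analogue but leaves implicit in the hybrid case) is correct and runs along exactly the paper's lines: restrict to the irreducible component~$W$ of $\ol{\bigcup Z_i}^{Zar}$, note that the weakly specials contained in~$W$ still have dense union, apply the second formulation to~$W$, and conclude $W$ equals one of the finitely many maximal elements.

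Where you go wrong is in the paragraph you label the ``crux''. You claim that the membership $W\in WS_{\Gamma\sous X^+}(M,X_M,x)$ --- i.e.\ that $W$ meets $\Gamma\sous\Sigma_X(M,X_M,x)$ --- is a genuine obstacle because ``meets $\Sigma$'' is supposedly not preserved under Zariski closure. This is false, and no argument about hybrid types, countability of $\Sigma_X(M,X_M,x)$, or classifying epimorphisms of adjoint groups is needed. By hypothesis $\bigcup Z_i$ is Zariski dense in~$W$, so in particular every $Z_i\subseteq W$; each $Z_i$ lies in $WS_{\Gamma\sous X^+}(M,X_M,x)$, hence by definition contains a point of $\Gamma\sous\Sigma_X(M,X_M,x)$; therefore $W$ contains that same point. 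The closure of a set containing points of $\Sigma$ still contains those points. The only genuinely non-formal content of Conjecture~\ref{conj:AO+APZ-ter} is that $W$ is \emph{weakly special} --- and that, being the whole substance of the conjecture, is not something a proof of the equivalences can or should establish; the paper defers it to the Zilber--Pink conjecture in \S\ref{sec:relation to ZP}. Your sketch conflates proving the equivalence of the three formulations with proving the conjecture itself, and in doing so invents a difficulty where none exists while leaving the real difficulty untouched.
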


\section{Intersection of a weakly special subvariety with special varieties}\label{sec:intersections}

In this section we study intersections of special and weakly special subvarieties. We notably show that, under certain conditions, 
 such intersections are `typical'. We believe that  results below are of independent interest and use.

\begin{theorem}\label{thm:special intersections are typical}
 Let~$(M,X_M)$ and~$(G,X)$ be two Shimura data and let~$W=\{x\}\times X\subseteq X_M\times X$ with~$x\in X_M$ a Hodge generic point.

Then, for every Shimura subdatum~$(H,X_H)\leq (M\times G ,X_M\times X)$ such that~$X_H\cap W\neq \emptyset$,
we have
\[
\dim(X_H\cap W)
=\dim(X_H)+\dim(W)-\dim(X_M\times X)
.
\]
In the sense of Definition~\ref{def:atypical} the intersection of~$X_H$ and~$W$ is typical with respect to~$X_M\times X$.
\end{theorem}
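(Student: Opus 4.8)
The plan is to reduce the statement to a computation with Mumford--Tate groups and to exploit the Hodge genericity of $x$ in $X_M$. Write $(H, X_H) \leq (M \times G, X_M \times X)$ with $X_H \cap W \neq \emptyset$, and pick a point $(x, y) \in X_H \cap W$. The crucial structural fact is the following: since $(x,y) \in X_H$, the Mumford--Tate datum of $(x,y)$ is a subdatum of $(H, X_H)$; and since the projection $\mathrm{pr}_M \colon M \times G \to M$ restricted to $\mathrm{MT}(x,y)$ surjects onto $\mathrm{MT}(x) = M$ (by Hodge genericity of $x$ in $X_M$, its Mumford--Tate group is all of $M$), the group $H$ itself satisfies $\mathrm{pr}_M(H) = M$. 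First I would record this: $\mathrm{pr}_M(H) = M$, so $H$ surjects onto $M$ via the first projection.

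Next I would analyse the fibre of $\mathrm{pr}_M|_{X_H}\colon X_H \to X_M$ over the point $x$. Let $H_1 = \ker(\mathrm{pr}_M|_H) \leq \{1\}\times G$, a normal $\mathbb{Q}$-subgroup of $H$; identify it with its image $H_1' \leq G$. The fibre $X_H \cap W = X_H \cap (\{x\} \times X)$ is, set-theoretically, $\{x\} \times (H_1'(\mathbb{R})^+ \cdot y)$, i.e. an $H_1'$-orbit inside $X$; this uses the fact that $X_H$ is an $H(\mathbb{R})^+$-orbit and the first projection intertwines the $H$-action with the $M$-action on $X_M$, so the stabiliser description gives that the fibre over $x$ is exactly the orbit of $H_1$. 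Hence $\dim(X_H \cap W) = \dim(X_{H_1'})$, the dimension of the associated symmetric space of $H_1'$. The dimension bookkeeping then reads: $\dim X_H = \dim X_{H_1'} + \dim X_M$ (from the surjection $X_H \to X_M$ with fibres $X_{H_1'}$ — here I would be careful that this is an equality of dimensions, which follows because $H \to M$ is surjective with kernel $H_1$ and the symmetric spaces split accordingly), and $\dim W = \dim X$, $\dim(X_M \times X) = \dim X_M + \dim X$. Substituting, $\dim X_H + \dim W - \dim(X_M \times X) = \dim X_{H_1'} + \dim X_M + \dim X - \dim X_M - \dim X = \dim X_{H_1'} = \dim(X_H \cap W)$, which is the claim.

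The main obstacle, and the step requiring the most care, is justifying that the fibre $X_H \cap W$ is precisely a single $H_1'(\mathbb{R})^+$-orbit and that its dimension equals $\dim X_{H_1'}$ — in other words, that $H_1' = \ker(\mathrm{pr}_M|_H)$ genuinely carries a Hermitian symmetric subspace structure compatible with the decomposition, and that no "extra" components of the intersection appear. This is where the theory of (weakly) special subvarieties and Moonen--Ullmo's description (equation~\eqref{Eq-fs}) enters: one writes $X_H$ via a reductive group and checks that the kernel of the projection is again reductive with the right symmetric space, using that $M\times G$ has no compact factors interfering and that $H \to M$ being surjective forces a clean splitting $H^{\mathrm{der}} \sim H_1'^{\mathrm{der}} \times (\text{a group isogenous onto } M^{\mathrm{der}})$ at the level of almost-direct products. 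I would also need to handle the possibility that $X_H \cap W$ is disconnected or has components of varying dimension — but since $X_H$ and $W$ are themselves (images of) homogeneous spaces, the intersection upstairs in $X_M \times X$ is a single orbit and hence equidimensional, so this is not a genuine issue once the orbit description is in place.
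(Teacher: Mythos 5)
Your proposal is correct and takes essentially the same route as the paper: deduce $\mathrm{pr}_M(H)=M$ from Hodge genericity, view the restriction $\pi \colon X_H \to X_M$ of the projection as a fibration with $X_H\cap W=\pi^{-1}(x)$, and do the dimension bookkeeping $\dim X_H = \dim X_M + \dim(X_H\cap W)$. The one difference worth noting is that the paper sidesteps the step you flag as the ``main obstacle'': rather than identifying the fibre explicitly as a single $H_1'(\R)^+$-orbit with $X_{H_1'}$ a genuine Hermitian symmetric space (which forces you into the reductive splitting $H^{\mathrm{ad}} \simeq M^{\mathrm{ad}}\times H_1'^{\mathrm{ad}}$ and compatibility of the symmetric space decomposition), the paper observes only that the equivariance $\phi(h)\cdot\pi(y)=\pi(h\cdot y)$ together with transitivity of $H(\R)$ on $X_H$ implies $H(\R)$ permutes the fibres of $\pi$ transitively; hence $\pi\colon X_H\to\pi(X_H)$ is automatically a locally trivial fibration with equidimensional fibres, and no structural identification of the fibre is required. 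This makes the paper's argument shorter and avoids the subtleties you were (rightly) worried about, such as the a priori possibility of the preimage of $\mathrm{Stab}_{M(\R)^+}(x)$ being larger than $H_1'(\R)^+\cdot\mathrm{Stab}_{H(\R)^+}(z)$, or of $X_H\cap W$ being disconnected or of varying component dimension. Your fibre-identification is ultimately also correct (it follows from the fact that maximal compacts surject onto maximal compacts under $H(\R)^+\to M(\R)^+$), but the cleaner observation is the more robust one to rely on.
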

\begin{proof}
As~$\dim(X_M\times X)=\dim(X_M)+\dim(X)=\dim(X_M)+\dim(W)$, it is enough to prove that
\[
\dim(X_M)+\dim(X_H\cap W)=\dim(X_H).
\]
Let~$\phi:H\to M$ and~$\pi:X_H\to X_M$ be the projections.
For~$h\in H(\R)$ and~$y\in X_H$ we have
\[
\phi(h)\cdot \pi(y)=\pi(h\cdot y).
\]
As~$H(\R)$ acts transitively on~$X_H$, it follows that~$H(\R)$ permutes transitively the fibres of~$\pi$.
Thus~$\pi:X_H\to \pi(X_H)$ is a fibration. Note that~$X_H\cap W$ is a fibre of~$\pi$. We deduce that
\[
\dim(X_H)=\dim(\pi(X_H))+\dim(X_H\cap W).
\]
As~$X_H\cap W\neq \emptyset$, we have~$x\in \pi(X_H)$. As~$x$ is Hodge generic in~$X_M$, the connected component~$X_M^+\subseteq X$ containing~$x$ satisfies~$X_M^+\subseteq \pi(X_H)$. We deduce that~$\dim(\pi(X_H))=\dim(X_M)$ and the conclusion follows.
\end{proof}

\begin{corollary}\label{cor:special intersections are typical}
Let~$S=\Gamma\sous X^+$ be an arithmetic variety, let~$W\subseteq S$ be a Hodge generic weakly special subvariety,
and let~$Z\subseteq S$ be a special subvariety. Then every irreducible component of~$Z\cap W$ is a typical intersection of~$Z$ and~$W$ with respect to~$S$.
\end{corollary}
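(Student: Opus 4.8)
The plan is to deduce Corollary~\ref{cor:special intersections are typical} from Theorem~\ref{thm:special intersections are typical} by passing from the quotient $S = \Gamma\sous X^+$ up to the universal covering $X^+$, where dimensions of varieties are unchanged under the (analytic, locally finite) covering map $\pi\colon X^+\to S$, so that ``typical'' can be checked upstairs.

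First I would use the description~\eqref{Eq-fs} of weakly special subvarieties: write the Hodge generic weakly special $W$ as $\Gamma\sous\Gamma\cdot H_1(\R)^+\cdot x_1$ for a $\QQ$-reductive $H_1\leq G$ with $X^+_{H_1}=H_1(\R)^+\cdot x_1$ a Hermitian symmetric subspace, and (since $W$ is Hodge generic in $S$) observe that $H_1$ surjects onto $G^{ad}$; likewise write the special subvariety $Z$ as $\Gamma\sous\Gamma\cdot H_2(\R)^+\cdot x_2$ with $H_2$ the Mumford-Tate group of a special point lying on a component of $Z$ upstairs. Next I would fix an irreducible component $C$ of $Z\cap W$ and lift everything: choose a component $\tilde C$ of $\pi^{-1}(C)$, contained in some $X^+_{H_2'}\cap(g\cdot X^+_{H_1})$ with $H_2'$ a conjugate of $H_2$, and translate by $g^{-1}$ so that $\tilde C$ lies in $X^+_{H_1}\cap X^+_{H_2''}$ for a suitable conjugate $H_2''$ of $H_2$. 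Since the covering is a local homeomorphism, $\dim C=\dim\tilde C$, $\dim Z=\dim X^+_{H_2''}=\dim X^+_{H_2}$, $\dim W=\dim X^+_{H_1}$, and $\dim S=\dim X^+$, so it suffices to prove the intersection $X^+_{H_1}\cap X^+_{H_2''}$ is typical in $X^+$.

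Now I would set up the product picture matching Theorem~\ref{thm:special intersections are typical}. The point $x_1$ is Hodge generic in $X^+_{H_1}$, so let $(M,X_M):=(H_1^{ad},X^{ad}_{H_1})$ — this is a Shimura datum (the adjoint datum of the Hermitian subspace $X^+_{H_1}$) — and let $x\in X_M$ be the image of $x_1$; then $x$ is Hodge generic in $X_M$ by construction. Consider $X_M\times X^+_{H_1}$ (or rather $X_M\times X$, embedding $X^+_{H_1}$); the diagonally embedded copy of $X^+_{H_1}$, pushed into $X_M\times X$ via $(\mathrm{id},\iota)$ and the quotient $H_1\to H_1^{ad}$, is a weakly special $W'$, and the graph of the quotient morphism identifies $X^+_{H_1}\cap X^+_{H_2''}$ with $W'\cap X_{H}$ for an appropriate Shimura subdatum $(H,X_H)\leq (M\times G,X_M\times X)$ built from $H_2''$. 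Applying Theorem~\ref{thm:special intersections are typical} with this $W=\{x\}\times X$ gives $\dim(X_H\cap W)=\dim X_H+\dim W-\dim(X_M\times X)$, which unwinds to the desired codimension identity $\codim_{X^+}(\tilde C)=\codim_{X^+}X^+_{H_1}+\codim_{X^+}X^+_{H_2''}$.

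The main obstacle I anticipate is bookkeeping at the level of Shimura \emph{data} rather than just groups: Theorem~\ref{thm:special intersections are typical} is phrased for genuine Shimura subdata of a product, so I need to produce, from the group $H_2''$ cutting out the lift of $Z$, an honest Shimura subdatum $(H,X_H)$ of $(M\times G,X_M\times X)$ whose $X_H$ really is (a translate of) the graph of $X^+_{H_1}\hookrightarrow X_M\times X$ intersected with $Z$ upstairs — i.e.\ checking that the relevant fibered product of reductive groups is reductive, admits the required Shimura datum structure, and that its symmetric space is exactly the fibre over $x$. A clean alternative that avoids most of this is to prove directly, by the fibration argument already used in the proof of Theorem~\ref{thm:special intersections are typical}: since $x_1$ is Hodge generic in $X^+_{H_1}$, the projection $X^+_{H_2''}\to (X^+_{H_2''})^{ad}$ composed with $H_2''^{ad}\twoheadrightarrow H_1^{ad}$ realizes $X^+_{H_1}\cap X^+_{H_2''}$ as a fibre of a fibration $X^+_{H_2''}\to X_M$ whose image contains the $X_M^+$ through $x$; then $\dim(X^+_{H_1}\cap X^+_{H_2''})=\dim X^+_{H_2''}-\dim X_M$, and combining with $\dim X^+ = \dim X_M + \dim X^+_{H_1}$ (valid because $X^+_{H_1}$, being Hodge generic, has adjoint space equal to all of $(X^+)^{ad}$) yields the typicality identity. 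I would present this direct argument as the proof, invoking Theorem~\ref{thm:special intersections are typical} only as the conceptual guide.
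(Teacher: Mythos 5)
Your overall plan (lift to $X^{+}$, reduce to a dimension count involving Theorem~\ref{thm:special intersections are typical}) is the right one and matches the paper, but there is a concrete error in how you set up the datum $(M,X_M)$: you have the two factors of the Moonen--Ullmo decomposition swapped. For a Hodge generic weakly special $W$, the correct description (and the one the paper uses) is a factorisation $(G^{ad},X^{ad})=(G_1,X_1)\times(G_2,X_2)$ with $W^{+}=\{x_1\}\times X_2^{+}$ and $x_1\in X_1^{+}$ Hodge generic; one must then apply Theorem~\ref{thm:special intersections are typical} with $(M,X_M)=(G_1,X_1)$, the \emph{transverse} factor where $x_1$ lives, so that $W^{+}=\{x_1\}\times X_2^{+}$ literally has the shape $\{x\}\times X$ demanded by the theorem. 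You instead take $(M,X_M)=(H_1^{ad},X^{ad}_{H_1})$, i.e. the adjoint of the group $H_1$ acting along $W$, which is essentially $(G_2,X_2)$. That choice breaks everything downstream: the claim that ``$H_1$ surjects onto $G^{ad}$'' is false (the image of $H_1^{ad}$ in $G^{ad}$ misses the whole $G_1$ factor, and its $\RR$-anisotropic part cannot surject onto the noncompact $G_1$); the surjection you invoke is $H_2''^{ad}\twoheadrightarrow H_1^{ad}$, whereas the surjection that Hodge genericity of $x_1$ actually gives you is $H_2''\twoheadrightarrow G_1$; and the dimension identity $\dim X^{+}=\dim X_M+\dim X^{+}_{H_1}$ becomes $\dim X^{+}=2\dim W$ with your $X_M$, which is generally false. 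The justification you offer, that ``$X^{+}_{H_1}$, being Hodge generic, has adjoint space equal to all of $(X^{+})^{ad}$'', is not true: $X^{+}_{H_1}$ is a proper totally geodesic subspace of $X^{+}$.

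Your ``clean alternative'' fibration argument is salvageable once you fix this: take the projection $X^{+}_{H_2''}\to X_1$ (not to $X^{ad}_{H_1}$). Since $X^{+}_{H_2''}\cap W^{+}\neq\emptyset$ forces $x_1$ into the image, and $x_1$ is Hodge generic in $X_1$, the map $H_2''\to G_1$ is surjective, so $X^{+}_{H_2''}\to X_1^{+}$ is an equidimensional fibration with fibre over $x_1$ equal to $X^{+}_{H_2''}\cap W^{+}$; then $\dim(X^{+}_{H_2''}\cap W^{+})=\dim X_{H_2''}-\dim X_1$ and $\dim X^{+}=\dim X_1+\dim X_2=\dim X_1+\dim W$ give typicality. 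This is essentially the paper's computation. One further remark: the local step should be a little more careful than ``choose a component of $\pi^{-1}(C)$'', since the preimage is a locally finite but globally countable union of $\Gamma$-translates; the paper handles this by picking a smooth point $s\in D$ not on any other component of $Z\cap W$, lifting a small neighbourhood biholomorphically, and observing that all components of $X_H^{+}\cap W^{+}$ have the same dimension by Theorem~\ref{thm:special intersections are typical}. That last point is what makes the ``pick any lift'' step harmless, and is worth stating explicitly.
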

\begin{proof} We identify~$X^+$ with a component of~$X^{ad}$. As~$W$ is a Hodge generic weakly special subvariety, there exists a factorisation
\[
(G^{ad},X^{ad})=(G_1,X_1)\times (G_2,X_2)\text{ and }X^+=X_1^+\times X_2^+
\]
and~$x_1\in X_1^+$ such that~$W$ is the image of~$W^+:=\{x_1\}\times X_2^+$ in~$\Gamma\sous X^+$.
As~$Z$ is a special subvariety, there exists a subdatum~$(H,X_H)\leq (G^{ad},X^{ad})$ and a component~$X^+_H\subseteq X^+\cap X_H$ such that~$Z$ is the image of~$X_H^+$.

Let~$D$ be an irreducible component of~$Z\cap W$ and let~$s\in D$ be a point not lying on another irreducible component and such that~$D$ is smooth at~$s$.
Then there exists a connected open neighbourhood~$U$ of~$s$ such that~$U\cap Z\cap W=U\cap D$ and~$U\cap D$ is an irreducible closed complex analytic subvariety of~$U$. 

There exists~$w\in W^+$ such that~$s$ is the image of~$w$ in~$\Gamma\sous X^+$. As~$X^+\to \Gamma\sous X^+$ is an étale cover (recall that $\Gamma$ is torsion free), there exists a connected neighbourhood~$V$ of~$x$ such that the image of~$V$ is~$U$ and that the map~$V\to U$ is a complex analytic isomorphism.

The inverse image of~$Z\cap W$ in~$V$ is~$\Gamma\cdot X_H^+\cap \Gamma\cdot W^+\cap V$. Irreducible components of~$X^+_H\cap W^+$ are closed complex analytic subvarieties of~$X^+$, and it follows from Theorem~\ref{thm:special intersections are typical} that these varieties are all of dimension~$d:=\dim(X_H)-\dim(X_1)$. We deduce that~$\Gamma\cdot X_H^+\cap \Gamma\cdot W^+\cap V$ is a countable union of irreducible closed complex analytic subvarieties of dimension~$d$. Applying the isomorphism~$V\to U$, we deduce that the irreducible complex analytic variety~$D\cap U$ is a countable union of irreducible closed complex analytic subvarieties of dimension~$d$. This implies~$\dim(D\cap U)=d$. Thus~$\dim(D)=d$.

We observe that~$\dim(Z)=\dim(X_H)$ and~$\dim(W^+)=\dim(X_1)$. We deduce that the component~$D$ is a typical intersection of~$Z\cap W$ with respect to~$\Gamma\sous X^+$. As~$D$ was chosen arbitrarily, this is also true for every component~$D$ of~$Z\cap W$.\end{proof}

\section{Relation between the hybrid conjecture and the Zilber-Pink conjecture}\label{sec:relation to ZP}
\begin{theorem}\label{thm:weakly inter special}Let~$(M,X_M)$ and~$(G,X)$ be Shimura data, let~$x\in X_M$
be a Hodge generic point and let~$W:=\{x\}\times X\subseteq X_M\times X$.
For~$V\subseteq W$ the following are equivalent.
\begin{enumerate}
\item \label{thm:weakly inter special 1}
There exists a Shimura subdatum~$(H,X_H)\leq (M\times G, X_M\times X)$  such that~$V$ is a component of~$X_H\cap W$.
\item \label{thm:weakly inter special 2} There exists~$V'\in WS_X(M,X_M,x)$ such that~$V=\{x\}\times V'$.
\end{enumerate}
\end{theorem}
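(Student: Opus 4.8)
The statement is an equivalence between two descriptions of subvarieties $V$ of $W = \{x\}\times X$: those cut out by Shimura subdata of $(M\times G, X_M\times X)$, and those of the form $\{x\}\times V'$ with $V'\in WS_X(M,X_M,x)$. I would prove each implication by translating the data at hand into a homomorphism of adjoint groups.

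For $(\ref{thm:weakly inter special 1})\Rightarrow(\ref{thm:weakly inter special 2})$, start with a Shimura subdatum $(H,X_H)\leq (M\times G, X_M\times X)$ with $X_H\cap W\neq\emptyset$, and let $\phi_M\colon H\to M$, $\phi_G\colon H\to G$ be the two projections, with induced maps $\pi_M\colon X_H\to X_M$, $\pi_G\colon X_H\to X$. As in the proof of Theorem~\ref{thm:special intersections are typical}, $\pi_M\colon X_H\to X_M^+$ is a surjective fibration (using that $x$ is Hodge generic in $X_M$ so that $X_M^+\subseteq\pi_M(X_H)$), and $V := X_H\cap W$ is the fibre $\pi_M^{-1}(x)$. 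Now I would pass to adjoint groups. Pick $y\in V$, so $y=(x,y_G)$ for some $y_G\in X$; I claim $y_G\in\Sigma_X(M,X_M,x)$, which suffices since then $V' := \pi_G(V)\ni y_G$ and one checks $V'$ is a weakly special subvariety of $X$ meeting $\Sigma_X(M,X_M,x)$, hence $V'\in WS_X(M,X_M,x)$, and $V=\{x\}\times V'$. To see $y_G\in\Sigma_X$: the Mumford--Tate group of $y$ inside $M\times G$ is contained in $H$ and projects onto $M$ (because $x$ is Hodge generic in $X_M$, so the first projection of $\mathrm{MT}(y)$ is all of $M$) and onto $M_{y_G}$ under $\phi_G$. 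Dividing by centres, $H^{ad}$ admits compatible epimorphisms to $M^{ad}$ and to $M_{y_G}^{ad}$; since $M^{ad}$ is a product of $\QQ$-simple factors and every epimorphism from it is projection onto a subproduct followed by an isomorphism (the argument already used in Proposition~\ref{prop:countable}), the epimorphism $H^{ad}\twoheadrightarrow M^{ad}$ splits, giving a homomorphism $M^{ad}\to H^{ad}\to M_{y_G}^{ad}$ sending $x^{ad}$ to $y_G^{ad}$. That is exactly the defining condition for $y_G\in\Sigma_X(M,X_M,x)$.

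For $(\ref{thm:weakly inter special 2})\Rightarrow(\ref{thm:weakly inter special 1})$, start with $V'\in WS_X(M,X_M,x)$, so $V'$ is a weakly special subvariety of $X$ containing some point $x'\in\Sigma_X(M,X_M,x)$, and set $V=\{x\}\times V'$. Write $V'$ in Moonen--Ullmo form: there is a factorisation $(G^{ad},X^{ad})=(G_1,X_1)\times(G_2,X_2)$, a point $x_1\in X_1$, and $V'$ is (a component of the preimage in $X$ of) $\{x_1\}\times X_2$, i.e.\ $V'=X_N\cdot x'$ for the $\QQ$-subgroup $N$ of $G$ pulling back $\{1\}\times G_2^{ad}$. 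The defining datum $x'\in\Sigma_X(M,X_M,x)$ gives an epimorphism $\phi\colon M^{ad}\to M_{x'}^{ad}$ with $\phi(x^{ad})=x'^{ad}$; composing with the inclusion $M_{x'}\hookrightarrow G$ at the level of a suitable lift, I want to build a $\QQ$-subgroup $H\leq M\times G$ whose Shimura datum $(H,X_H)$ has $X_H\cap W=V$. The natural candidate is $H := $ the subgroup of $M\times G$ generated by the graph of (a lift of) $\phi$ composed with $M_{x'}\hookrightarrow G$, together with $\{1\}\times N$; equivalently $H$ is the image of $M\times N$ under $(m,n)\mapsto (m, \tilde\phi(m)n)$ for an appropriate homomorphism $\tilde\phi\colon M\to G$ lifting $M^{ad}\to M_{x'}^{ad}\hookrightarrow G^{ad}$. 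One then checks $(H,X_H)$ is a Shimura subdatum containing $(x,x')$, that the first projection $X_H\to X_M$ is a fibration over $X_M^+$ with fibre over $x$ equal to $\{x\}\times(N(\R)^+\cdot x')=\{x\}\times V'=V$, so $V$ is a component of $X_H\cap W$.

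**Main obstacle.** The delicate point is the second implication: producing an \emph{honest} $\QQ$-subgroup $H\leq M\times G$ and checking that $(H,X_H)$ is a genuine Shimura subdatum (in particular that $X_H$ is a Hermitian symmetric subdomain and that $H$ is reductive) whose intersection with $W$ is exactly $V'$, not just something of the right dimension. The subtlety is that $\Sigma_X$ is defined purely in terms of adjoint-level homomorphisms $M^{ad}\to M_{x'}^{ad}$, whereas a Shimura subdatum of $M\times G$ lives at the level of the groups with their centres; so one must lift the adjoint homomorphism to a homomorphism compatible with the Hodge-theoretic structure, using that $x'$ is a point of $X$ and hence its Mumford--Tate group $M_{x'}$ already sits inside $G$ with a compatible cocharacter. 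Keeping careful track of which connected components are involved (the superscripts ${}^+$), and of the passage between $X^{ad}$ and $X$, is where the real work lies; the first implication, by contrast, is essentially the fibration argument of Theorem~\ref{thm:special intersections are typical} combined with the splitting-of-epimorphisms observation from Proposition~\ref{prop:countable}.
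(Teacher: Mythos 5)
The first implication is where your proposal breaks down; the second is essentially right.

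\textbf{Gap in (1)\,$\Rightarrow$\,(2).} You pick an \emph{arbitrary} $y=(x,y_G)\in V$ and claim $y_G\in\Sigma_X(M,X_M,x)$. This cannot hold for all $y\in V$: by Proposition~\ref{prop:countable} the set $\Sigma_X(M,X_M,x)$ is countable, while $V'=\pi_G(V)$ is in general positive-dimensional, so the claim would force $\Sigma_X$ to contain a whole positive-dimensional variety. The concrete failure is in the splitting step. Write $\mathrm{MT}(y)^{ad}=\prod_{i\in I}M_i$ and let $J_1$ (resp.\ $J_2$) be the set of indices on which the projection $p_M$ to $M^{ad}$ (resp.\ $p_G$ to $M^{ad}_{y_G}$) is non-trivial. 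A section $\sigma\colon M^{ad}\to\mathrm{MT}(y)^{ad}$ of $p_M$ is inclusion of the $J_1$-subproduct, so the cocharacter $\sigma\circ x^{ad}$ has trivial components over $I\setminus J_1$. On the other hand, since $\mathrm{MT}(y)^{ad}$ is the Mumford--Tate group of $y^{ad}$, every factor sees a non-trivial component of $y^{ad}$, so $y_G^{ad}=p_G(y^{ad})$ has non-trivial components coming from all of $J_2$. Hence $p_G\circ\sigma\circ x^{ad}=y_G^{ad}$ requires $J_2\subseteq J_1$, i.e.\ that $p_M$ be an isomorphism — which is exactly what one needs to prove and is false for a generic $y$ (take $(M\times G,X_M\times X)=(\PGL_2\times\PGL_2,\HH\times\HH)$, $H=M\times G$, $V=W$: for generic $y_G$ the composite $p_G\circ\sigma$ is the trivial homomorphism). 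The paper sidesteps this by not working with a generic $y$: it picks a special point $s$ in the relevant component of $X_H$, forms the reductive subgroup $L'=Z_H(L)^0\cdot M_s$ with $L=\ker(H\to M)$, checks $L'\to M$ is a central isogeny so $L'^{ad}\cong M^{ad}$, and then takes $y=(x,x')$ to be the unique intersection point $W\cap L'(\R)^+\cdot s$. For that specific $y$, $\mathrm{MT}(y)\leq L'$ and the projection to $M^{ad}$ is forced to be an isomorphism. If you want to repair your argument along your own lines, you must likewise choose $y$ so that $\mathrm{MT}(y)^{ad}\to M^{ad}$ is already an isomorphism, not hope that a splitting exists for arbitrary $y$.

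\textbf{On (2)\,$\Rightarrow$\,(1).} Your construction (graph of $\Phi(m,h)=(m,\tilde\phi(m),h)$) is the same as the paper's, and you correctly flag the lifting issue. The paper's route around it is the "WLOG $M=M^{ad}$ and $G=L=L^{ad}=L_1\times L_2$" reduction, which lets one work entirely at the adjoint level and avoids choosing a lift $\tilde\phi$. One small slip in your write-up: the Moonen--Ullmo description factors the adjoint of a Shimura \emph{subdatum} $(L,X_L)\leq(G,X)$, not $G^{ad}$ itself; when $G^{ad}$ is $\QQ$-simple its only factorisations are trivial, yet there are many weakly special subvarieties, so one really must pass to $(L,X_L)$ first.
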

\begin{proof}[Proof of  the implication~\eqref{thm:weakly inter special 1}$\Rightarrow$\eqref{thm:weakly inter special 2}]
Let~$(H,X_H)\leq(M\times G, X_M\times X)$ be a subdatum such that~$X_H\cap W\neq \emptyset$, and let~$V$ be a component of~$X_H\cap W$. As~$V\subseteq W$, we can write~$V=\{x\}\times V'$ for some~$V'\subseteq X$.

Since the components~$X_H$ and~$W$ are totally geodesic and complex analytic,~$V$ is also totally geodesic and complex analytic.
It follows that~$V\in WS_X$.
We need to prove that
\begin{equation}\label{proof 1 implies 2}
\exists x'\in \Sigma_X(M,X_M,x), (x,x')\in V.
\end{equation}
To do this, we will construct a subdatum~$(L',L'(\R)\cdot s)\leq (H,X_H)$ such that the morphism~$L'\to M$ is surjective and induces an isomorphism~$L'^{ad}\to M^{ad}$. We will show that~$X_L\cap V$ is of the form~$X_L\cap V=\{(x,x')\}$ and that~$x'\in \Sigma_X(M,X_M,x)$.


Let~$Z$ be the connected component of~$X_H$ containing~$V$, and consider~$s$ in~$X_H$, a special point within~$Z$. Recall that~$T:=M_{s}\leq H$, being a torus,  forms a Shimura subdatum~$(T,\{s\})\leq (H,X_H)$. This implies that the torus~$T\leq H$ normalises the normal subgroups~$L:=G\cap H=\ker(H\to M)$ and~$Z_H(L)$ of~$H$. It follows that~$L':=Z_H(L)^{0}\cdot T\leq H$ is a reductive subgroup.

As~$X_H\cap W\neq \emptyset$, the image of~$X_H\to X_M$ contains the Hodge generic point~$x$. Consequently, the image of~$X_H\to X_M$ includes the connected component of~$X_M$ that contains~$x$, and~$H\to M^{ad}$ is surjective.

Given that~$H$ is reductive and~$L=\ker(H\to M)\leq H$ is a connected \emph{normal} subgroup, we have~$H=L\cdot Z_H(L)^0$. This implies that we can express~$L'=Z_H(L)^0\cdot S$ where~$S\leq L$ is a torus. 

Furthermore, this implies that the morphism~$Z_H(L)^0\to M$ is surjective. Consequently, the morphism~$L'\to M$ is surjective. The kernel of~$Z_H(L)\to M$ is~$L\cap Z_H(L)=Z(L)\leq Z(Z_H(L))$ and the kernel of~$L'\to M$ is~$S\cdot Z(L)\subseteq Z(Z_H(L))\cap Z(S)=Z(L')$. We deduce that the epimorphism~$L'\to M$ is a central extension. As a result, the induced map~$L'^{ad}\to M^{ad}$ is an isomorphism. 

Note that no~$\Q$-factor of~$M^{ad}$ is~$\R$-anisotropic. This, in turn, implies that no~$\Q$-factor of~$L'^{ad}$ is~$\R$-anisotropic. According to~\cite[Lemme 3.3]{U} this implies that~$(L',L'(\R)\cdot s)\leq (H,X_H)$ is a Shimura subdatum.

As the induced map~$L'^{ad}\to M^{ad}$ is an isomorphism, the map~$L'(\R)^+\cdot s\to X_M$ is injective, and its image is a component of~$X_M$. Since~$s\in Z$, this image contains the component of~$X_M$ containing the image of~$Z$, which is also the component of~$X_M$ containing~$x$. Therefore, there exists an element~$(x,x')\in W\cap L'(\R)^+\cdot s$. Given that~$L'(\R)^+\cdot s\subseteq Z$, we have~$(x,x')\in W\cap Z=V$.

The morphism~$M\times G\to M$ induces a surjection of pointed Mumford-Tate data~$(M^{ad}_{(x,x')},X^{ad}_{(x,x')},(x,x')^{ad})\to (M^{ad}_{x},X^{ad}_{x},x^{ad})$. Since~$(x,x')\in L(\R)\cdot s$, we have~$M_{(x,x')}\leq L$. Consequently, the surjection~$M^{ad}_{(x,x')}\to M^{ad}_{x}$ is an isomorphism. By definition,
\[
(x,x')\in \Sigma_{X_M\times X}(M,X_M,x).
\]

The morphism of Shimura data~$(L,L(\R)\cdot s)\to (G,X)$ maps~$(x,x')$ to~$x'$. Similarly, the morphism of Shimura data~$(M_{(x,x')},X_{(x,x')})\to (G,X)$ maps~$(x,x')$ to~$x'$. By definition~$x'$ is in the generalised Hecke orbit of~$(x,x')$.
According to remarks~\ref{rem:hybrid4} and~\ref{rem:hybrid3} following Definition~\ref{def:orbite hybride}, we have
\[
x'\in \Sigma_{X}(M_{(x,x')},X_{(x,x')},(x,x'))\subseteq \Sigma_{X}(M,X_M,x).
\]
This implies~\eqref{proof 1 implies 2} and concludes the proof of the implication~\eqref{thm:weakly inter special 1}$\Rightarrow$\eqref{thm:weakly inter special 2}.
%
%
\end{proof}
\begin{proof}[We now prove the implication~\eqref{thm:weakly inter special 2}$\Rightarrow$\eqref{thm:weakly inter special 1}]

Consider a weakly special~$V'\subseteq X$. There exists a Shimura subdatum~$(L,X_L)\leq (G,X)$ and a factorisation~$(L^{ad},X^{ad}_L)=(L_1\times L_2,X_1\times X_2)$ and~$x_1\in X_1$ such that the image of~$V$ by~$X_L\hookrightarrow X_L^{ad}$ is a component of~$\{x_1\}\times X_2$. Then~$V'$ is an orbit of~$L_2(\R)^+$.

Without loss of generality, let us assume~$M=M^{ad}$ and~$G=L=L^{ad}=L_1\times L_2$, and that~$x_1$ is Hodge generic in~$X_1$.

By assumption, there exists~$x'=(x_1,x_2)\in V'\cap \Sigma_X(M,X_M,x)$. Since~$x'\in \Sigma_X(M,X_M,x)$, there exists a morphism of Shimura data~$\phi':(M^{ad},X_M^{ad})\to (M_{x'}^{ad},X^{ad}_{x'})$ such that~$\phi'\circ x^{ad}=x'^{ad}$. The morphism~$M_{x'}\to L_1$ is surjective because~$x_1$ is the image of~$x'$ and is Hodge generic in~$X_1$. As~$L_1=L_1^{ad}$ this induces a morphism~$\phi'':M^{ad}_{x'}\to L_1$.

Then~$\phi=\phi''\circ\phi':(M^{ad},X_M^{ad})\to (M_{x'}^{ad},X^{ad}_{x'})\to (L_1,x_1)$ is such that~$\phi\circ x=x_1$.

Then the morphism~$\Phi:M\times L_2\to M\times L_1\times L_2$ defined by
\[
\Phi(m,h)=(m,\phi(m),h)
\]
gives us a morphism of Shimura data
\[
(M\times L_2,X_M\times X_2)\to (M\times L_1\times L_2,X_M\times X_1\times X_2).
\]
Let~$(H,X_H)$ be the Shimura datum which is the image of this morphism, where~$H=\Phi(M\times L_2)$ and~$X_H=H(\R)\cdot \Phi(X_M\times X_2)$.
Note that~$\Phi(X_M\times X_2)$ is a union of components of~$X_H$.

An element~$(a,b,c)\in X_M\times X_1\times X_2$ satisfies
\[
(a,b,c)\in \left(\{x\}\times X_1\times X_2\right)\cap \Phi(X_M\times X_2)
\]
if and only if~$a=x$ and~$b=\phi(a)=\phi(x)=x_1$. We thus have
\[
\left(\{x\}\times X_1\times X_2\right)\cap \Phi(X_M\times X_2)=\{x\}\times\{x_1\}\times X_2.
\]
Components of this intersection are orbits of~$L_2(\R)^+$. The orbit containing~$(x,x_1,x_2)\in V=\{x\}\times V'$ is thus equal to~$V$. We can deduce that~$V$ is a component of~$X_H\cap W$. This finishes the proof.
\end{proof}

We obtain a new interpretation of the~$Z\in WS_X(M,X_M,x)$, as images of the~$x^{ad}$ by general correspondences induced by morphisms of Shimura data.
\begin{corollary}\label{cor:functorial1}
For~$V'\subseteq X$, we have~$V'\in WS_{X}(M,X_M,x)$ if and only if there exists a Shimura datum~$(H,X_H)$ and two morphisms of Shimura data~$\Phi_1:(H,X_H)\to (M^{ad},X^{ad}_M)$ and~$\Phi_2:(H,X_H)\to (G,X)$ such that
\[
V'\text{ is a component  of }\Phi_2\left(\stackrel{-1}{\Phi_1}(\{x^{ad}\})\right).
\]
\end{corollary}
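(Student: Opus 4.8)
The plan is to read the statement off Theorem~\ref{thm:weakly inter special}: the point is simply that, once $(H,X_H)$ is placed inside $(M^{ad}\times G,\,X^{ad}_M\times X)$, the fibre $\Phi_1^{-1}(\{x^{ad}\})$ is exactly the slice $X_H\cap W$ for $W=\{x^{ad}\}\times X$. First I would record that, by Definition~\ref{def:orbite hybride}, the set $\Sigma_X(M,X_M,x)$, and hence $WS_X(M,X_M,x)$, depends only on the pointed adjoint datum $(M^{ad},X^{ad}_M,x^{ad})$, and that $x^{ad}$ is Hodge generic in $X^{ad}_M$ (its Mumford--Tate group being $M^{ad}$). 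Consequently Theorem~\ref{thm:weakly inter special} applies verbatim with $(M^{ad},X^{ad}_M)$, $x^{ad}$ and $W=\{x^{ad}\}\times X\subseteq X^{ad}_M\times X$ playing the roles of $(M,X_M)$, $x$ and $W$, and I use it in that form throughout.

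For the implication ``$V'\in WS_X(M,X_M,x)\Rightarrow$ such a triple $(H,X_H),\Phi_1,\Phi_2$ exists'', I apply the implication~\eqref{thm:weakly inter special 2}$\Rightarrow$\eqref{thm:weakly inter special 1} of Theorem~\ref{thm:weakly inter special} to obtain a Shimura subdatum $(H,X_H)\leq (M^{ad}\times G,X^{ad}_M\times X)$ for which $\{x^{ad}\}\times V'$ is a component of $X_H\cap W$. I then take $\Phi_1$ and $\Phi_2$ to be the two projections of $(H,X_H)$ onto $(M^{ad},X^{ad}_M)$ and onto $(G,X)$. A point $(a,b)\in X_H$ lies in $W$ precisely when $a=x^{ad}$, i.e. when $\Phi_1(a,b)=x^{ad}$, so $\Phi_1^{-1}(\{x^{ad}\})=X_H\cap W$; since this set is contained in $\{x^{ad}\}\times X$, the morphism $\Phi_2$ restricts on it to the injective second-coordinate projection and hence carries irreducible components to irreducible components. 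Therefore $V'=\Phi_2(\{x^{ad}\}\times V')$ is a component of $\Phi_2(\Phi_1^{-1}(\{x^{ad}\}))$, as wanted.

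For the converse, given $(H,X_H)$, $\Phi_1:(H,X_H)\to(M^{ad},X^{ad}_M)$ and $\Phi_2:(H,X_H)\to(G,X)$ with $V'$ a component of $\Phi_2(\Phi_1^{-1}(\{x^{ad}\}))$ (a non-empty set, so $\Phi_1^{-1}(\{x^{ad}\})\neq\emptyset$), I form the morphism of Shimura data $\psi=(\Phi_1,\Phi_2):(H,X_H)\to(M^{ad}\times G,X^{ad}_M\times X)$ and let $(H',X_{H'})\leq(M^{ad}\times G,X^{ad}_M\times X)$ be its image, constructed exactly as in the proof of Theorem~\ref{thm:weakly inter special}. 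Since $\psi(y)\in W$ iff $\Phi_1(y)=x^{ad}$, one has $X_{H'}\cap W=\psi(\Phi_1^{-1}(\{x^{ad}\}))$, and applying the second-coordinate projection --- injective on $X_{H'}\cap W\subseteq\{x^{ad}\}\times X$, hence component-preserving --- the component $\{x^{ad}\}\times V'$ of $X_{H'}\cap W$ corresponds to the component $V'$ of $\Phi_2(\Phi_1^{-1}(\{x^{ad}\}))$. The implication~\eqref{thm:weakly inter special 1}$\Rightarrow$\eqref{thm:weakly inter special 2} of Theorem~\ref{thm:weakly inter special} then yields $V'\in WS_X(M^{ad},X^{ad}_M,x^{ad})=WS_X(M,X_M,x)$, completing the proof. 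The whole argument is formal once Theorem~\ref{thm:weakly inter special} is available; the only points needing any care are the reduction to the adjoint datum and, in the converse, invoking the standard fact (already used in the proof of Theorem~\ref{thm:weakly inter special}) that the image of a morphism of Shimura data is a Shimura subdatum --- which is exactly why $(H,X_H)$ in the statement is not required to sit inside $M\times G$. I do not foresee a genuine obstacle.
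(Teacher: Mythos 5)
Your forward direction matches the paper's essentially verbatim. For the converse, however, you take a genuinely different and in fact more economical route. The paper forms the graph of $\Phi_1$ inside $(M^{ad}\times H, X^{ad}_M\times X_H)$, applies Theorem~\ref{thm:weakly inter special} \emph{there} to find a component $V'_1$ of $\stackrel{-1}{\Phi_1}(\{x^{ad}\})$ belonging to $WS_{X_H}(M,X_M,x)$, then picks a point $x_1\in V'_1\cap\Sigma_{X_H}(M,X_M,x)$, pushes it forward by $\Phi_2$, and invokes the transitivity Remark~\ref{rem:hybrid3} (together with the fact that $\Phi_2(V'_1)$ is weakly special) to conclude $V'=\Phi_2(V'_1)\in WS_X(M,X_M,x)$. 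You instead form the image Shimura subdatum $(H',X_{H'})$ of $\psi=(\Phi_1,\Phi_2)$ directly inside $(M^{ad}\times G,X^{ad}_M\times X)$ and apply the implication \eqref{thm:weakly inter special 1}$\Rightarrow$\eqref{thm:weakly inter special 2} once, which bypasses the intermediate ambient datum $X^{ad}_M\times X_H$ and the transitivity-of-$\Sigma$ step. Both arguments need the standard fact that the image of a morphism of Shimura data is a Shimura subdatum; yours uses it for $\psi$, the paper's uses it (implicitly) for the graph of $\Phi_1$. One small imprecision in your write-up: the equality $X_{H'}\cap W=\psi(\stackrel{-1}{\Phi_1}(\{x^{ad}\}))$ is not literally correct, since $\psi(X_H)$ is in general only a \emph{union of components} of $X_{H'}$ (as noted in the proof of Theorem~\ref{thm:weakly inter special}), so $X_{H'}\cap W$ may contain extra components not in the image of $\psi$. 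What is true is $\psi(X_H)\cap W=\psi(\stackrel{-1}{\Phi_1}(\{x^{ad}\}))$, and since $\psi(X_H)$ is a union of components of $X_{H'}$ the components of this set are still components of $X_{H'}\cap W$, so $\{x^{ad}\}\times V'$ is indeed a component of $X_{H'}\cap W$ and the conclusion stands; it is just worth phrasing the intermediate claim correctly. Your opening observation that $\Sigma_X$ and $WS_X$ depend only on the pointed adjoint datum $(M^{ad},X^{ad}_M,x^{ad})$, so that Theorem~\ref{thm:weakly inter special} applies with $W=\{x^{ad}\}\times X$, is a welcome clarification: the paper uses this silently (writing $W:=\{x^{ad}\}\times X$ although the theorem is stated for $W=\{x\}\times X\subseteq X_M\times X$).
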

\begin{proof}Let us prove the first implication.
Suppose that~$V'\in WS_{X}(M,X_M,x)$ and let~$V:=\{x^{ad}\}\times V'$ and~$W:=\{x^{ad}\}\times X$. Let~$(H,X_H)$ be as in Theorem~\ref{thm:weakly inter special}.
Let~$\Phi_1$ and~$\Phi_2$ be the projections of~$(H,X_H)$ on~$(M,X_M)$ and~$(G,X)$. Then~$V=\{x^{ad}\}\times V'$ is a component of~$\stackrel{-1}{\Phi_1}(\{x^{ad}\})=W\cap X_H$, and
\[
V'=\Phi_2(\{x^{ad}\}\times V')\text{ is a component of }\Phi_2\left(\stackrel{-1}{\Phi_1}(\{x^{ad}\})\right).
\]
We now prove the other implication.
Assume that there are morphisms of Shimura data~$\Phi_1:(H,X_H)\to (M^{ad},X^{ad}_M)$ and~$\Phi_2:(H,X_H)\to (G,X)$ such that~$V'$ is a component  of~$\Phi_2\left(\stackrel{-1}{\Phi_1}(\{x^{ad}\})\right)$. The graph of~$\Phi_1$ gives a Shimura subdatum~$(H_1,X_1)\leq (M\times H,X_M\times X_H)$. Then any component~$V_1$ of~$X_1\cap \left(\{x\}\times X_H\right)$ is of the form~$\{x\}\times V'_1$ for a component~$V'_1$ of~$\stackrel{-1}{\Phi_1}(\{x^{ad}\})$.  Theorem~\ref{thm:weakly inter special} implies that~$V'_1\in WS_{X_H}(M,X_M,x)$. Thus~$V'_1\cap \Sigma_{X_H}(M,X_M,x)$ contains a point~$x_1$. We deduce~$\Phi_2(x_1)\in \Phi_2(V'_1)$. But~$\Phi_2(x_1)\in \Sigma_{X}(H,X_H,x_1)$. By remark~\ref{rem:hybrid3} following Definition~\ref{def:orbite hybride}, we have~$\Phi_2(x_1)\in \Sigma_{X}(M,X_M,x)$. Note that~$\Phi_2(V'_1)$ is weakly special. Thus~$V'=\Phi_2(V'_1)$ belongs to~$WS_{X}(M,X_M,x)$.
\end{proof}
We now deduce the functoriality properties.
\begin{corollary}\label{Functoriality}
Let~$\Phi:(G,X)\to (G',X')$ be a morphism of Shimura data.
\begin{enumerate}
\item
For every subset~$Z\subset X$, we have
\[Z\in WS_{X}(M,X_M,x)\Rightarrow\Phi(Z)\in WS_{X'}(M,X_M,x).\]
\item
For every subset~$Z'\subset X'$, we have
\[Z'\in WS_{X'}(M,X_M,x)\Leftrightarrow \forall Z''\in Irr\left(\stackrel{-1}{\Phi}(Z')\right), Z''\in WS_{X}(M,X_M,x),\]
where~$Irr(\stackrel{-1}{\Phi}(Z'))$ denotes the set of complex analytic irreducible components.

In particular, if~$Z_{x'}$ denotes the fibre~$\stackrel{-1}{\Phi}(\{x'\})$,
\[
\forall x'\in \Sigma_{X'}(M,X_M,x),\text{ the components of $Z_{x'}$ belong to~$\in WS_{X}(M,X_M,x)$}.
\]
\item
Let~$\ol{Y}$ denote the closure for the Archimedean topology (resp. the Zariski topology). Then for every subset~$Z'\subseteq \Phi(X)$, we have
\[
\ol{Z'\cap \Sigma_{X'}(M,X_M,x)}=\ol{Z'}\Leftrightarrow\ol{\stackrel{-1}{\Phi}(Z')\cap\Sigma_{X}(M,X_M,x)}=\ol{\stackrel{-1}{\Phi}(Z')}.
\]
In particular,
\[
\forall
x'\in \Sigma_{X'}(M,X_M,x),\ol{Z_{x'}\cap\Sigma_{X}(M,X_M,x)}=\ol{Z_{x'}}.
\]
\end{enumerate}
\end{corollary}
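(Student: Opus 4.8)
The plan is to deduce everything from the ``correspondence'' description of $WS_X(M,X_M,x)$ in Theorem~\ref{thm:weakly inter special} (and Corollary~\ref{cor:functorial1}), together with one preliminary observation extending Remark~\ref{rem:hybrid4}: the \emph{key fact} that for any Shimura datum $(G',X')$, any $x'\in\Sigma_X(M,X_M,x)$ and any morphism of Shimura data $\Psi\colon(M_{x'},X_{x'})\to(G',X')$, one has $\Psi(x')\in\Sigma_{X'}(M,X_M,x)$. I would prove this by noting that the Mumford--Tate group of $\Psi(x')$ is $\Psi(M_{x'})$, so $\Psi$ induces an epimorphism $M_{x'}^{ad}\to M_{\Psi(x')}^{ad}$ carrying $x'^{ad}$ to $\Psi(x')^{ad}$ (as in Remark~\ref{rem:hybrid0}); by definition $\Psi(x')\in\Sigma_{X'}(M_{x'},X_{x'},x')$, and by transitivity (Remark~\ref{rem:hybrid3}) this is contained in $\Sigma_{X'}(M,X_M,x)$. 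Applying this to the restriction of $\Phi$ to $(M_{x'},X_{x'})$ yields $\Phi(\Sigma_X(M,X_M,x))\subseteq\Sigma_{X'}(M,X_M,x)$. Part~(1) is then immediate: for $Z\in WS_X(M,X_M,x)$ the image $\Phi(Z)$ is weakly special (images of weakly special subvarieties under morphisms of Shimura data are weakly special), and for any $x_0'\in Z\cap\Sigma_X(M,X_M,x)$ the key fact gives $\Phi(x_0')\in\Phi(Z)\cap\Sigma_{X'}(M,X_M,x)$.

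For part~(2), the approach for ``$\Rightarrow$'' is to pull back the realisation of $Z'$ provided by Theorem~\ref{thm:weakly inter special}. I would write $\{x\}\times Z'$ as a component of $X_K\cap(\{x\}\times X')$ for a Shimura subdatum $(K,X_K)\leq(M\times G',X_M\times X')$ (Theorem~\ref{thm:weakly inter special}, implication (2)$\Rightarrow$(1)), then apply $F:=\mathrm{id}_M\times\Phi\colon(M\times G,X_M\times X)\to(M\times G',X_M\times X')$. The set $F^{-1}(X_K)$ is again a finite union of symmetric domains attached to Shimura subdata $(K'_a,X_{K'_a})\leq(M\times G,X_M\times X)$, and $F^{-1}\bigl(X_K\cap(\{x\}\times X')\bigr)=F^{-1}(X_K)\cap(\{x\}\times X)$, whose components over $\{x\}\times Z'$ are precisely the sets $\{x\}\times Z''$ with $Z''$ a component of $\Phi^{-1}(Z')$. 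A dimension count — using that $X_K\cap(\{x\}\times X')$ is equidimensional (it is a fibre of the fibration $X_K\to X_M$, as in the proof of Theorem~\ref{thm:special intersections are typical}) and that $\Phi$ restricts to a fibration of constant relative dimension — shows each $\{x\}\times Z''$ is actually a component of some $X_{K'_a}\cap(\{x\}\times X)$, whence $Z''\in WS_X(M,X_M,x)$ by Theorem~\ref{thm:weakly inter special}, implication (1)$\Rightarrow$(2). For ``$\Leftarrow$'' I would restrict to an irreducible $Z'\subseteq\Phi(X)$ (the setting in which this direction holds); then $Z'=\Phi(\Phi^{-1}(Z'))$ is the union of the images $\Phi(Z''_i)$ of the components $Z''_i$ of $\Phi^{-1}(Z')$, each $\Phi(Z''_i)$ being weakly special by hypothesis, so irreducibility forces $Z'=\Phi(Z''_{i_0})$ for some $i_0$; thus $Z'$ is weakly special, and $\emptyset\neq\Phi\bigl(Z''_{i_0}\cap\Sigma_X(M,X_M,x)\bigr)\subseteq Z'\cap\Sigma_{X'}(M,X_M,x)$ by the key fact, so $Z'\in WS_{X'}(M,X_M,x)$. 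The ``in particular'' clause is the case $Z'=\{x'\}$ of ``$\Rightarrow$''.

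For part~(3) I would argue for the Archimedean closure (the Zariski case being analogous, since $\Phi$ descends to a morphism of algebraic varieties that is continuous and open onto its image). The implication ``$\Leftarrow$'' is formal: continuity of $\Phi$, the equality $\Phi(\Phi^{-1}(Z'))=Z'$ for $Z'\subseteq\Phi(X)$, and the key fact give
\[
Z'=\Phi\bigl(\Phi^{-1}(Z')\bigr)\subseteq\Phi\bigl(\overline{\Phi^{-1}(Z')\cap\Sigma_X(M,X_M,x)}\bigr)\subseteq\overline{Z'\cap\Sigma_{X'}(M,X_M,x)}.
\]
For ``$\Rightarrow$'', I would use that $\Phi\colon X\to\Phi(X)$ is an open surjection (a fibration with homogeneous fibres), so that $\Phi^{-1}(\overline A)=\overline{\Phi^{-1}(A)}$ for $A\subseteq\Phi(X)$; with $A=Z'\cap\Sigma_{X'}(M,X_M,x)$ this shows $\Phi^{-1}(Z')\cap\Phi^{-1}\bigl(\Sigma_{X'}(M,X_M,x)\bigr)$ is dense in $\Phi^{-1}(Z')$. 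By part~(2) the components of each fibre $\Phi^{-1}(\{x'\})$ with $x'\in\Sigma_{X'}(M,X_M,x)$ lie in $WS_X(M,X_M,x)$, so by the density of hybrid Hecke orbits in weakly special subvarieties (recalled after Conjecture~\ref{conj:AO+APZ-bis}, see~\cite[Lem.~3.3]{AD}) $\Sigma_X(M,X_M,x)$ is dense in $\Phi^{-1}(\{x'\})$, hence dense in $\Phi^{-1}\bigl(\Sigma_{X'}(M,X_M,x)\bigr)$. Chaining the two density statements gives that $\Phi^{-1}(Z')\cap\Sigma_X(M,X_M,x)$ is dense in $\Phi^{-1}(Z')$; the ``in particular'' clause is again the case $Z'=\{x'\}$.

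The step I expect to be the main obstacle is the maximality assertion inside part~(2)~``$\Rightarrow$'': one obtains immediately only that $\{x\}\times Z''$ is \emph{contained} in a component of $F^{-1}(X_K)\cap(\{x\}\times X)$, and upgrading this to an equality requires the dimension count, which in turn rests on the equidimensionality of $X_K\cap(\{x\}\times X')$ — this is where Theorem~\ref{thm:special intersections are typical} and the Hodge genericity of $x$ (forcing $X_K\to X_M$ to be a fibration onto the component of $x$) enter essentially. The remaining ingredients — that images and preimages of weakly special subvarieties under morphisms of Shimura data are (finite unions of) weakly special subvarieties, and the openness/fibration properties of $\Phi$ on symmetric domains — are standard.
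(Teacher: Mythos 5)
The paper states this corollary without proof — it simply follows ``We now deduce the functoriality properties'' after Corollary~\ref{cor:functorial1} — so there is no paper argument to compare against; I am assessing your proof on its own merits. Your preliminary ``key fact'' (that a morphism of Shimura data out of the Mumford--Tate datum $(M_{x'},X_{x'})$ of a point $x'\in\Sigma_X(M,X_M,x)$ sends $x'$ into $\Sigma_{X'}(M,X_M,x)$) is correct, and your arguments for part~(1), for part~(2)~``$\Leftarrow$'' (under the necessary reading that $Z'$ is irreducible and meets $\Phi(X)$), and for all of part~(3) are sound.

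The one place that needs attention is part~(2)~``$\Rightarrow$''. Two remarks. First, the step you flag as the main obstacle — upgrading containment of $\{x\}\times Z''$ in a component of $F^{-1}(X_K)\cap(\{x\}\times X)$ to equality — does not actually require a dimension count: $F^{-1}(X_K)\cap(\{x\}\times X)=F^{-1}\bigl(X_K\cap(\{x\}\times X')\bigr)$, and since $\{x\}\times Z'$ is a \emph{component} (hence a maximal irreducible closed subset) of $X_K\cap(\{x\}\times X')$, any irreducible closed subset of $F^{-1}\bigl(X_K\cap(\{x\}\times X')\bigr)$ containing $\{x\}\times Z''$ must lie in $F^{-1}(\{x\}\times Z')=\{x\}\times\Phi^{-1}(Z')$, where $\{x\}\times Z''$ is already maximal because $Z''\in\mathrm{Irr}(\Phi^{-1}(Z'))$. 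Second, and more to the point, the assertion you treat as ``standard'' — that $F^{-1}(X_K)$ is a finite union of Hermitian symmetric spaces of Shimura subdata $(K'_a,X_{K'_a})\leq(M\times G,X_M\times X)$ — is the real content that needs an argument. It amounts to checking that $F^{-1}(K)$ is a reductive $\QQ$-subgroup (it is: any normal unipotent subgroup would inject into $K$ under $F$, since $\ker F=\{1\}\times\ker\Phi$ is reductive), that for any $\tilde z\in F^{-1}(X_K)$ one has $\tilde z(\SSS)\subseteq F^{-1}(K)_\R$ (clear), and crucially that $\bigl(F^{-1}(K),F^{-1}(K)(\R)\cdot\tilde z\bigr)$ satisfies the Shimura axioms — in particular, that $F^{-1}(K)^{ad}$ has no $\R$-anisotropic $\QQ$-factor. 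The last point is where~\cite[Lemme~3.3]{U} (already used in the proof of Theorem~\ref{thm:weakly inter special}) enters, and it relies on $\ker\Phi^{ad}$ being a product of non-compact $\QQ$-simple factors of $G^{ad}$ (Deligne's axiom SV3). You should also note that the component decomposition of $F^{-1}(X_K)$ is indeed finite because $X_M\times X$ has finitely many connected components and the fibres of $F$ restricted to each such component are connected. With these points made explicit, the proof goes through.
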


We can now prove the following result.

\begin{theorem}\label{thm: intersection atypiques are the same}
Let~$W\subseteq \Gamma \sous X^+$ be a Hodge generic weakly special subvariety and let~$V\subseteq W$
be a subvariety and let~$Z\subseteq \Gamma\sous X^+$ be a special subvariety.
\begin{enumerate}
\item \label{thm: intersection atypiques are the same1}
 Let~$C$ be an irreducible component of~$Z\cap V$ such that~$\codim_Z(C)<\codim_{\Gamma\sous X^+}(V)$.
Then there exists an irreducible component~$Z'$ of~$Z\cap W$ such that~$\codim_{Z'}(C)<\codim_{W}(V)$ and such that~$C$ is an irreducible component of~$Z'\cap V$.
\item \label{thm: intersection atypiques are the same2}
Let~$Z'$ be an irreducible component of~$Z\cap W$ and let~$C'$ be an irreducible component of~$Z'\cap V$ such that~$\codim_{Z'}(C')<\codim_{W}(V)$.
Then there exists an irreducible component~$C$ of~$Z\cap V$ such that~$\codim_Z(C)<\codim_{\Gamma\sous X^+}(V)$
and~$C'\subseteq C$.
\end{enumerate}
\end{theorem}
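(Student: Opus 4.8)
The plan is to pass to the covering space $X^+$ and reduce to an elementary statement about fibre dimensions, using Corollary~\ref{cor:special intersections are typical} as the key input. Recall that since $W$ is Hodge generic weakly special, after identifying $X^+$ with a component of $X^{\ad}$ we have a factorisation $(G^{\ad},X^{\ad})=(G_1,X_1)\times(G_2,X_2)$, $X^+=X_1^+\times X_2^+$, and a point $x_1\in X_1^+$ with $W$ the image of $W^+=\{x_1\}\times X_2^+$. By Corollary~\ref{cor:special intersections are typical}, every irreducible component $Z'$ of $Z\cap W$ is a \emph{typical} intersection with respect to $S=\Gamma\sous X^+$, hence satisfies
\[
\dim(Z')=\dim(Z)-\codim_S(W)=\dim(Z)-\dim(X_1^+).
\]
This is the bookkeeping fact that ties the two codimension inequalities together.

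For part~\eqref{thm: intersection atypiques are the same1}: let $C$ be an irreducible component of $Z\cap V$ with $\codim_Z(C)<\codim_S(V)$. Since $V\subseteq W$, the component $C$ is contained in $Z\cap W$, hence in some irreducible component $Z'$ of $Z\cap W$; choosing $Z'$ to contain $C$, we have $C\subseteq Z'\cap V\subseteq Z\cap V$, and since $C$ is a \emph{maximal} irreducible subvariety of $Z\cap V$ it is also an irreducible component of $Z'\cap V$. It remains to verify $\codim_{Z'}(C)<\codim_W(V)$. Using the dimension formula above together with $\codim_S(W)=\dim(X_1^+)$ and $\dim(S)=\dim(W)+\dim(X_1^+)$, one computes
\[
\codim_{Z'}(C)=\dim(Z')-\dim(C)=\dim(Z)-\dim(X_1^+)-\dim(C),
\]
\[
\codim_W(V)=\dim(W)-\dim(V),\qquad \codim_S(V)=\dim(S)-\dim(V)=\dim(W)+\dim(X_1^+)-\dim(V).
\]
Subtracting, $\codim_S(V)-\codim_{Z'}(C)=\big(\dim(W)+\dim(X_1^+)-\dim(V)\big)-\big(\dim(Z)-\dim(X_1^+)-\dim(C)\big)$, while $\codim_Z(C)=\dim(Z)-\dim(C)$, and a short manipulation shows the hypothesis $\codim_Z(C)<\codim_S(V)$ is \emph{equivalent} to $\codim_{Z'}(C)<\codim_W(V)$. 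So the same chain of equalities gives both directions, which is exactly what part~\eqref{thm: intersection atypiques are the same2} needs as well.

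For part~\eqref{thm: intersection atypiques are the same2}: given $Z'$ a component of $Z\cap W$ and $C'$ a component of $Z'\cap V$ with $\codim_{Z'}(C')<\codim_W(V)$, note $C'\subseteq Z\cap V$, so $C'$ lies in some irreducible component $C$ of $Z\cap V$. Since $\dim(C)\geq\dim(C')$, the computation above (run with the inequality $\codim_{Z'}(C')<\codim_W(V)$ and $\dim(C)\geq \dim(C')$) yields $\codim_Z(C)\leq \dim(Z)-\dim(C')-\dim(X_1^+)+\codim_W(V)<\codim_S(V)$ after substituting the typicality relation $\dim(Z')=\dim(Z)-\dim(X_1^+)$; this gives $\codim_Z(C)<\codim_S(V)$ and $C'\subseteq C$, as required. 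The one point demanding care is the reduction to the covering space $X^+$: one must observe that $Z\cap V$ and $Z'\cap V$ can be analysed locally near a generic smooth point of the relevant component, pulling back to a small polydisc in $X^+$ where $Z$ lifts to $\Gamma\cdot X_H^+$ and $W$ to $\Gamma\cdot W^+$, exactly as in the proof of Corollary~\ref{cor:special intersections are typical}; there the countable-union argument shows each local component has the expected dimension. The main obstacle is therefore not any deep geometry but making sure the passage between $Z\cap W$ in $S$ and $X_H^+\cap W^+$ in $X^+$ is clean enough that the dimension count in Corollary~\ref{cor:special intersections are typical} applies verbatim; once that is set up, parts~\eqref{thm: intersection atypiques are the same1} and~\eqref{thm: intersection atypiques are the same2} are the two directions of a single elementary equivalence of codimension inequalities.
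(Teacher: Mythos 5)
Your proposal is correct and follows essentially the same route as the paper: both proofs take an arbitrary component $C$ (resp.\ $C'$) of the relevant intersection, observe that it sits inside some component $Z'$ of $Z\cap W$ (resp.\ some component $C$ of $Z\cap V$), apply Corollary~\ref{cor:special intersections are typical} to get the dimension relation $\dim(Z')=\dim(Z)+\dim(W)-\dim(\Gamma\sous X^+)$, and then translate between the two codimension inequalities by the resulting elementary arithmetic, using maximality of $C$ in $Z\cap V$ for part (1) and $\dim(C)\geq\dim(C')$ for part (2). The only stylistic difference is that you re-derive some of the covering-space material inside the proof of Corollary~\ref{cor:special intersections are typical}, whereas the paper treats that corollary as a black box; this is harmless redundancy, not a gap.
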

\begin{proof}According to Corollary~\ref{cor:special intersections are typical}, for every irreducible component~$Z'$ of~$Z\cap W$, we have
\[
\dim(Z')
=
\dim(Z)+\dim(W)-\dim(\Gamma\sous X^+).
\]
We deduce
\begin{equation}\label{compare exp dim}
\dim(Z')+\dim(V)-\dim(W)=\dim(Z)+\dim(V)-\dim(\Gamma\sous X^+).
\end{equation}

Let us prove assertion~\ref{thm: intersection atypiques are the same1}.

Let~$C$ be an irreducible component of~$Z\cap V$.
We have~$C\subseteq Z\cap V\subseteq Z\cap W$. Thus there exists an irreducible component~$Z'$ of~$Z\cap W$ such that~$C\subseteq Z'$. As~$C$ is a maximal irreducible closed subset of~$Z\cap V$, it is a maximal irreducible closed subset of~$Z'\cap V\subseteq Z\cap V$. Thus~$C$ is an irreducible component of~$Z'\cap V$. 

By~\eqref{compare exp dim}, we have
\begin{align*}
&\dim(C)>
\dim(Z')+\dim(V)-\dim(W)\\
\Leftrightarrow&
\dim(C)>
\dim(Z)+\dim(V)-\dim(\Gamma\sous X^+).
\end{align*}
The assertion~\eqref{thm: intersection atypiques are the same1} follows.

Let us prove~\eqref{thm: intersection atypiques are the same2}.

Let~$Z'$ be an irreducible component of~$Z\cap W$ and let~$C'$ be an irreducible component of~$Z'\cap V$. Then~$C'\subseteq Z'\cap V\subseteq Z\cap V$. Thus there exists an irreducible component~$C$ of~$Z\cap V$ which contains~$C'$. Thus~$\dim(C)\geq \dim(C')$ and, by~\eqref{compare exp dim}, we have
\begin{align*}
&\dim(C')>
\dim(Z')+\dim(V)-\dim(W)\\
\Leftrightarrow&
\dim(C')>
\dim(Z)+\dim(V)-\dim(\Gamma\sous X^+)\\
\Rightarrow&
\dim(C)>
\dim(Z)+\dim(V)-\dim(\Gamma\sous X^+).
\end{align*}
The assertion~\eqref{thm: intersection atypiques are the same2} follows.
\end{proof}

\begin{corollary}\label{coro:equivalence-ZP}
Let~$(M,X_M)$ and~$(G,X)$ be two Shimura data, let~$S=\Gamma_M\sous X_M^+\times \Gamma\sous X^+$ for some arithmetic lattices~$\Gamma_M \leq M(\Q)$ and~$\Gamma\leq G(\Q)$,  let~$x\in X_M^+$ be Hodge generic and let~$W=\Gamma_M\cdot x\times \Gamma\sous X^+\subseteq S$. We denote by~$Irr(~)$ for the set of irreducible components.

Let~$\Sigma_{S}$ be the set of special subvarieties~$Z\subseteq S$.

Then, for~$V=\{\Gamma_M\cdot x\}\times V'\subseteq W$, the sets
\[
ZP_{V,S}:=\bigcup\{C\in Irr(Z\cap V)|Z\in \Sigma_{S}\text{ and } \codim_{Z}(C)<\codim_{S}(V)\}
\]
and
\begin{multline*}
ZP_{V',W}:=\bigcup\{C'\in Irr(Z'\cap V')|Z'\in WS_{\Gamma\sous X^+}(M,X_M,x)\\ \text{ and } \codim_{Z'}(C')<\codim_{\Gamma\sous X^+}(V')\}
\end{multline*}
satisfy
\[
ZP_{V,S}=\{\Gamma_M\cdot x\}\times ZP_{V',W}.
\]
\end{corollary}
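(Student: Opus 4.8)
The plan is to deduce this corollary directly from Theorem~\ref{thm: intersection atypiques are the same} together with the characterisation of $WS_{\Gamma\sous X^+}(M,X_M,x)$ in terms of components of intersections $X_H\cap W$ (Theorem~\ref{thm:weakly inter special}, pushed down to the arithmetic quotient). First I would observe that in the product $S=\Gamma_M\sous X_M^+\times\Gamma\sous X^+$, every subvariety of the weakly special subvariety $W=\{\Gamma_M\cdot x\}\times\Gamma\sous X^+$ has the form $\{\Gamma_M\cdot x\}\times V'$ with $V'\subseteq\Gamma\sous X^+$, and moreover $W$ is Hodge generic in $S$ precisely because $x$ is Hodge generic in $X_M$. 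So the hypotheses of Theorem~\ref{thm: intersection atypiques are the same} are met with this $W$, and with $V=\{\Gamma_M\cdot x\}\times V'$.

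Next I would establish the key dictionary: for a special subvariety $Z\subseteq S$, the irreducible components $Z'$ of $Z\cap W$ are exactly the subvarieties $\{\Gamma_M\cdot x\}\times Z''$ with $Z''\in WS_{\Gamma\sous X^+}(M,X_M,x)$, and conversely every $Z''\in WS_{\Gamma\sous X^+}(M,X_M,x)$ arises this way from some special $Z\subseteq S$. The forward direction is (the arithmetic-quotient version of) Theorem~\ref{thm:weakly inter special}, implication \eqref{thm:weakly inter special 1}$\Rightarrow$\eqref{thm:weakly inter special 2}: a component of $X_H\cap W$ for a Shimura subdatum $(H,X_H)\leq(M\times G,X_M\times X)$ is $\{x\}\times V'$ with $V'\in WS_X(M,X_M,x)$; and a special subvariety $Z$ of $S$ is the image of such an $X_H$ with $(H,X_H)$ additionally containing a special point, which does not affect the argument. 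The converse is implication \eqref{thm:weakly inter special 2}$\Rightarrow$\eqref{thm:weakly inter special 1}: given $V'\in WS_{\Gamma\sous X^+}(M,X_M,x)$ we get a Shimura subdatum $(H,X_H)$ of $(M\times G,X_M\times X)$ with $\{x\}\times V'$ a component of $X_H\cap W$; since $V'$ meets $\Sigma_{\Gamma\sous X^+}(M,X_M,x)$, which contains all special points, one checks $X_H$ contains a special point (a special point of $X$ in the image of the relevant fibre produces a special point $(x',x'')$ of $X_M\times X$ lying in $X_H$ — here one uses that $x$ itself need not be special, so one picks a special point of the weakly special $V'$, which exists as $V'$ is in $WS$), hence its image $Z\subseteq S$ is special, and $\{\Gamma_M\cdot x\}\times V'$ is a component of $Z\cap W$.

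With this dictionary in hand the equality $ZP_{V,S}=\{\Gamma_M\cdot x\}\times ZP_{V',W}$ follows by a straightforward matching of terms in the two unions. For the inclusion $ZP_{V,S}\subseteq\{\Gamma_M\cdot x\}\times ZP_{V',W}$: take $Z\in\Sigma_S$ and a component $C$ of $Z\cap V$ with $\codim_Z(C)<\codim_S(V)$; apply Theorem~\ref{thm: intersection atypiques are the same}\eqref{thm: intersection atypiques are the same1} to get a component $Z'$ of $Z\cap W$ with $\codim_{Z'}(C)<\codim_W(V)$ and $C$ a component of $Z'\cap V$; writing $Z'=\{\Gamma_M\cdot x\}\times Z''$ with $Z''\in WS_{\Gamma\sous X^+}(M,X_M,x)$ and $C=\{\Gamma_M\cdot x\}\times C'$ with $C'$ a component of $Z''\cap V'$, the codimension inequality transcribes (using $\codim_W(\,\cdot\,)=\codim_{\Gamma\sous X^+}(\,\cdot\,)$ on the $\Gamma\sous X^+$ factor) to $\codim_{Z''}(C')<\codim_{\Gamma\sous X^+}(V')$, so $C'\in ZP_{V',W}$. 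For the reverse inclusion, start from $Z''\in WS_{\Gamma\sous X^+}(M,X_M,x)$ and a component $C'$ of $Z''\cap V'$ with small codimension; lift $Z''$ to a component $Z'$ of $Z\cap W$ for some special $Z\subseteq S$ via the dictionary, then apply Theorem~\ref{thm: intersection atypiques are the same}\eqref{thm: intersection atypiques are the same2} to obtain a component $C$ of $Z\cap V$ with $\codim_Z(C)<\codim_S(V)$ and $C'\subseteq C$; since such a $C$ lies in $W$, it has the form $\{\Gamma_M\cdot x\}\times(\text{something})$ containing $C'$, and taking the union over all data gives $\{\Gamma_M\cdot x\}\times C'\subseteq ZP_{V,S}$. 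I expect the main obstacle to be the bookkeeping in the converse direction of the dictionary — verifying that a weakly special $Z''\in WS_{\Gamma\sous X^+}(M,X_M,x)$ lifts to a component of $Z\cap W$ for a genuinely \emph{special} (not merely weakly special) $Z\subseteq S$ — which requires producing a special point inside the relevant Shimura subdatum $(H,X_H)$; this is exactly where the fact that $\Sigma_{\Gamma\sous X^+}(M,X_M,x)$ contains all special points, rather than just the generalised Hecke orbit, does the work.
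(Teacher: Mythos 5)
Your overall approach is the paper's: you combine the dictionary from Theorem~\ref{thm:weakly inter special} (components of $X_H\cap W$ correspond to elements of $WS_X(M,X_M,x)$, pushed to the arithmetic quotient) with the two directions of Theorem~\ref{thm: intersection atypiques are the same}, then match terms in the two unions, using equation~\eqref{compare exp dim} to translate the codimension inequalities between $W$ and $S$. That is exactly what the paper does, splitting the desired equality into the two containments $E\to E'$ and $E'\to E$ via parts~\eqref{thm: intersection atypiques are the same1} and~\eqref{thm: intersection atypiques are the same2} respectively.

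However, the argument you offer for what you flag as the ``main obstacle'' — verifying that the $Z\subseteq S$ produced from $Z''\in WS_{\Gamma\sous X^+}(M,X_M,x)$ is \emph{special} and not merely weakly special — is incorrect as written. You assert that ``one picks a special point of the weakly special $V'$, which exists as $V'$ is in $WS$''. That is false. Membership of $V'$ in $WS_{\Gamma\sous X^+}(M,X_M,x)$ only means $V'$ meets $\Sigma_{\Gamma\sous X^+}(M,X_M,x)$; when $x$ is not special, $\Sigma$ also contains non-special points (the whole generalised Hecke orbit of $x$, by Remark~\eqref{rem:hybrid4}), so the witness point need not be special, and there is no reason $V'$ should contain a special point at all. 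Fortunately this worry is already disposed of by the statement of Theorem~\ref{thm:weakly inter special}: item~\eqref{thm:weakly inter special 1} produces a genuine \emph{Shimura subdatum} $(H,X_H)\leq(M\times G,X_M\times X)$ — not an arbitrary reductive subgroup with a totally geodesic orbit — and the image of $X_H^+$ in $\Gamma_M\sous X^+_M\times\Gamma\sous X^+$ is therefore automatically a special subvariety (every Shimura subdatum has a dense set of special points). The specialness of $Z$ thus comes for free from Theorem~\ref{thm:weakly inter special}, with no need to locate a special point inside $V'$. With that correction, the rest of your term-matching is sound and agrees with the paper's proof.
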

\begin{proof} Let us write
\begin{align*}
E&:= \{C\in Irr(Z\cap V)|Z\in \Sigma_{S}, \codim_{Z}(C)<\codim_{S}(V)\}
\\
E'&:=\{C'\in Irr(Z'\cap V')|Z'\in WS_{\Gamma\sous X^+}(M,X_M,x), \codim_{Z'}(C')<\codim_{\Gamma\sous X^+}(V')\}.
\end{align*}

It is enough to prove that
\begin{equation}\label{E et E'}
\forall C\in E, \exists C''\in E', C=\{\Gamma_M\cdot x\}\times C''
\end{equation}
and that 
\begin{equation}\label{E' et E}
\forall C''\in E', \exists C\in E, \{\Gamma_M\cdot x\}\times C''\subseteq C.
\end{equation}

We now prove~\eqref{E et E'}.
Let~$C\in E$ and let~$Z\in \Sigma_S$ such that~$C\in Irr(Z\cap V)$ and~$\codim_{Z}(C)<\codim_{S}(V)$.
Let~$Z'$ be as in~\eqref{thm: intersection atypiques are the same1} of Theorem~\ref{thm: intersection atypiques are the same}. We have
\[
C\in Irr(Z'\cap V)\text{ and }\codim_{Z'}(C)<\codim_{W}(V).
\]
By Theorem~\ref{thm:weakly inter special}, there exists~$Z''\in  WS_{\Gamma\sous X^+}(M,X_M,x)$ such that~$Z'=\{\Gamma_M\cdot x\}\times Z''$.
We deduce~$C'=\{\Gamma_M\cdot x\}\times C''$ with~$C''\in E'$.

We prove~\eqref{E' et E}.
Let~$C''\in E'$ and let~$Z''\in WS_{\Gamma\sous X^+}(M,X_M,x)$ such that~$C'\in Irr(Z'\cap V')$ and~$\codim_{Z'}(C')<\codim_{\Gamma\sous X^+}(V')$.
According to Theorem~\ref{thm:weakly inter special}, there exists~$Z\in \Sigma_S$ such that~$Z':=\{\Gamma_M\cdot x\}\times Z''$ is an irreducible component of~$Z\cap W$. Let~$C$ be as in~\eqref{thm: intersection atypiques are the same2} of Theorem~\ref{thm: intersection atypiques are the same} for~$C':=\{\Gamma_M\cdot x\}\times C''$. Then~$C\in E$ and~$C'\subseteq C$.
\end{proof}

It follows from~Corollary~\ref{coro:equivalence-ZP} that Conjecture~\ref{conj:roro-ZP} (Zilber-Pink) for~$V=\{\Gamma_M\cdot x\}\times V'\subseteq \Gamma_M\sous X_M\times  \Gamma\sous X^+$ is equivalent to Conjecture~\ref{conj:truc} below for~$V'\subseteq \Gamma\sous X^+$.  In particular Conjecture~\ref{conj:truc} for~$(M,X_M,x)=(\{1\},\{1\},1)$ is equivalent to Conjecture~\ref{conj:roro-ZP} for~$V=V'\subseteq \Gamma\sous X^+$.
\begin{conjecture}[Variant of Zilber-Pink for $WS_{\Gamma\sous X^+}(M,X_M,x)$]\label{conj:truc}
Assume~$V'\subsetneq \Gamma\sous X^+$ is an irreducible subvariety. 

Then there exists~$Z_1,\ldots,Z_k\in WS_{\Gamma\sous X^+}(M,X_M,x)$ such that
\[
ZP_{V',W}\subseteq Z_1\cup\ldots \cup Z_k\neq  \Gamma\sous X^+.
\]

\end{conjecture}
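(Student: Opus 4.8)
The plan is to obtain Conjecture~\ref{conj:truc} for $V'\subsetneq\Gamma\sous X^+$ by applying the Zilber--Pink Conjecture~\ref{conj:roro-ZP} on $S=\Gamma_M\sous X_M^+\times\Gamma\sous X^+$ to the subvariety $V:=\{\Gamma_M\cdot x\}\times V'\subseteq W$, and then transferring the conclusion down to $\Gamma\sous X^+$ by means of Corollary~\ref{coro:equivalence-ZP}. First I would record that, since $V'\subsetneq\Gamma\sous X^+$, one has $\dim V=\dim V'<\dim S$, so $V$ is a proper irreducible subvariety of $S$ and Conjecture~\ref{conj:roro-ZP} is applicable to it.

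The main point is that $ZP_{V,S}$ is, by its very definition, a countable union of atypical components, hence exactly of the shape to which Conjecture~\ref{conj:roro-ZP} refers. Indeed, there are only countably many special subvarieties of $S$ (as in the proof of Proposition~\ref{prop:countable}), and each contributes finitely many members to $Irr(Z\cap V)$, so one may enumerate all pairs $(Z_n,W_n)_{n\geq0}$ with $Z_n\subseteq S$ special, $W_n\in Irr(Z_n\cap V)$ and $\codim_{Z_n}(W_n)<\codim_S(V)$ --- the last condition being precisely that $W_n$ is an atypical intersection of $Z_n$ and $V$ in $S$ --- so that $ZP_{V,S}=\bigcup_n W_n$ (padding to an infinite sequence if necessary; the case $ZP_{V,S}=\emptyset$ is trivial, with $k=0$). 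Each such $Z_n$ is moreover a \emph{proper} special subvariety: if $Z_n=S$ then $\codim_{Z_n}(W_n)=\codim_S(W_n)\geq\codim_S(V)$ because $W_n\subseteq V$, contradicting atypicality. Hence Conjecture~\ref{conj:roro-ZP} applies verbatim and produces finitely many proper special subvarieties $\cZ_1,\dots,\cZ_\ell$ of $S$ with $ZP_{V,S}\subseteq\cZ_1\cup\dots\cup\cZ_\ell$.

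It then remains to push this statement down to $\Gamma\sous X^+$. Since $W_n\subseteq V\subseteq W$ for every $n$, we get $ZP_{V,S}\subseteq\bigcup_i(\cZ_i\cap W)$; and for each $i$ with $\cZ_i\cap W\neq\emptyset$, because $\cZ_i$ is special and $W=\{\Gamma_M\cdot x\}\times\Gamma\sous X^+$ is a Hodge generic weakly special subvariety of $S$, the reasoning already performed inside the proof of Corollary~\ref{coro:equivalence-ZP} --- an application of Theorem~\ref{thm:weakly inter special} at the level of arithmetic varieties --- shows that every irreducible component of $\cZ_i\cap W$ has the form $\{\Gamma_M\cdot x\}\times Z_{i,j}''$ with $Z_{i,j}''\in WS_{\Gamma\sous X^+}(M,X_M,x)$. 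Combining this with the identity $ZP_{V,S}=\{\Gamma_M\cdot x\}\times ZP_{V',W}$ of Corollary~\ref{coro:equivalence-ZP} gives $ZP_{V',W}\subseteq\bigcup_{i,j}Z_{i,j}''$, and the finitely many $Z_{i,j}''$ are the sought $Z_1,\dots,Z_k$. To conclude that $Z_1\cup\dots\cup Z_k\neq\Gamma\sous X^+$, I would argue by contradiction: equality would force $\bigcup_{i,j}\bigl(\{\Gamma_M\cdot x\}\times Z_{i,j}''\bigr)=W$, hence $W\subseteq\cZ_1\cup\dots\cup\cZ_\ell$; irreducibility of $W$ together with the Zariski-closedness of the $\cZ_i$ would then give $W\subseteq\cZ_i$ for some $i$, which is impossible since $W$ is Hodge generic in $S$ while $\cZ_i$ is a proper special subvariety. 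Most of the real work has been done already in Corollary~\ref{coro:equivalence-ZP} and Theorem~\ref{thm:weakly inter special}; the steps requiring some care, and where I expect whatever mild difficulty there is to lie, are the identification of $ZP_{V,S}$ with an admissible sequence $\bigcup_n W_n$ so that Conjecture~\ref{conj:roro-ZP} applies without modification, and the closing non-covering argument.
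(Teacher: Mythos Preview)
Your argument is correct and is essentially the paper's own route: the paper does not give a standalone proof of Conjecture~\ref{conj:truc} but simply asserts, immediately before stating it, that it is equivalent to Conjecture~\ref{conj:roro-ZP} for~$V=\{\Gamma_M\cdot x\}\times V'$ ``by Corollary~\ref{coro:equivalence-ZP}''. Your write-up spells out one direction of that equivalence in detail --- enumerating the atypical components to feed into Conjecture~\ref{conj:roro-ZP}, then intersecting the resulting proper special subvarieties with~$W$ and invoking Theorem~\ref{thm:weakly inter special} (as in the proof of Corollary~\ref{coro:equivalence-ZP}) to land in~$WS_{\Gamma\sous X^+}(M,X_M,x)$, and finally using Hodge genericity of~$W$ for the non-covering clause --- which is exactly what the paper leaves implicit.
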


The following is straightforward.

\begin{lemma}\label{lem:7.5}
Let~$V'\subseteq \Gamma\sous X^+$ be an irreducible subvariety and write
\[
\Phi:=\{Z'\in WS_{\Gamma\sous X^+}(M,X_M,x)|Z'\subseteq V'\}
\]
and
\begin{multline*}
\Psi:=\{C'\in Irr(Z'\cap V')|Z'\in WS_{\Gamma\sous X^+}(M,X_M,x)\\
\text{ and } \codim_{Z'}(C')<\codim_{\Gamma\sous X^+}(V')\}.
\end{multline*}
Then
\begin{align}
\dim(\Gamma\sous X^+)-\dim(V')>0&\Rightarrow \Phi\subseteq \Psi
\\\text{ and~}\dim(\Gamma\sous X^+)-\dim(V')=1&\Rightarrow\Phi=\Psi.
\end{align}
\end{lemma}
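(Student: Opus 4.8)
The plan is to unwind the definitions of $\Phi$ and $\Psi$ and compare the codimension conditions directly. Fix $Z'\in WS_{\Gamma\sous X^+}(M,X_M,x)$. For the first implication, suppose $Z'\subseteq V'$, so that $Z'\in\Phi$. Then $Z'$ is itself an irreducible component of $Z'\cap V'$ (the intersection being all of $Z'$), hence $Z'\in Irr(Z'\cap V')$ with $\codim_{Z'}(Z')=0$. Since $V'\subsetneq \Gamma\sous X^+$ by hypothesis $\dim(\Gamma\sous X^+)-\dim(V')>0$, we have $\codim_{\Gamma\sous X^+}(V')\geq 1>0=\codim_{Z'}(Z')$, so $Z'$ satisfies the defining condition of $\Psi$; thus $Z'\in\Psi$. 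This proves $\Phi\subseteq\Psi$, which also gives the "$\supseteq$" half of the second claim.

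For the reverse inclusion under the hypothesis $\codim_{\Gamma\sous X^+}(V')=1$, take $C'\in\Psi$, say $C'\in Irr(Z'\cap V')$ with $\codim_{Z'}(C')<\codim_{\Gamma\sous X^+}(V')=1$; this forces $\codim_{Z'}(C')=0$, i.e. $\dim(C')=\dim(Z')$. Since $C'$ is an irreducible component of $Z'\cap V'$ and $Z'$ is irreducible, the equality of dimensions forces $C'=Z'$ (an irreducible closed subset of the irreducible variety $Z'$ of the same dimension must equal $Z'$). In particular $Z'=C'\subseteq Z'\cap V'\subseteq V'$, so $Z'\in\Phi$, and as $C'=Z'$ this exhibits $C'$ as an element of $\Phi$. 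Hence $\Psi\subseteq\Phi$, and combined with the first part we get $\Phi=\Psi$ when $\codim_{\Gamma\sous X^+}(V')=1$.

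There is essentially no obstacle here: the statement is purely a bookkeeping exercise with codimensions and the elementary fact that an irreducible subvariety of an irreducible variety of equal dimension is the whole variety. The only point requiring a word of care is that elements of $\Phi$ are weakly special subvarieties while elements of $\Psi$ are a priori just irreducible components of intersections; the argument shows that under the stated codimension constraints these components coincide with the weakly special $Z'$ themselves, so the two descriptions match on the nose. One should also note that in the borderline case $\codim(V')=1$ every $C'\in\Psi$ automatically satisfies $\codim_{Z'}(C')=0$, which is what makes the identification $C'=Z'$ possible and hence forces $\Psi=\Phi$ rather than merely $\Phi\subseteq\Psi$.
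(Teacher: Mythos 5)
The paper omits the proof (stating only ``The following is straightforward''), so there is no in-text argument to compare against; your proof is correct and is the natural unwinding of the definitions. Both inclusions are handled cleanly: $\Phi\subseteq\Psi$ follows from taking $C'=Z'$ with $\codim_{Z'}(Z')=0<\codim_{\Gamma\sous X^+}(V')$, and $\Psi\subseteq\Phi$ in codimension one follows because $\codim_{Z'}(C')<1$ forces $C'=Z'$ by the equal-dimension-irreducible argument, so nothing further is needed.
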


We deduce that
\begin{enumerate}
\item In general, Conjecture~\ref{conj:AO+APZ-bis} (Hybrid conjecture) for~$V'$ is a consequence of Conjecture~\ref{conj:truc} for~$V'$.
\item If~$\dim(\Gamma\sous X^+)-\dim(V')=1$, then Conjecture~\ref{conj:AO+APZ-bis} for~$V'$ is equivalent to Conjecture~\ref{conj:truc} for~$V'$.
\item If~$\dim(\Gamma\sous X^+)-\dim(V')=1$, then Conjecture~\ref{conj:AO+APZ-bis} for~$V'$ is equivalent to Conjecture~\ref{conj:roro-ZP} for~$V=\{\Gamma_M\cdot x\}\times V'\subseteq \Gamma_M\sous X_M^+\times  \Gamma\sous X^+$.
\end{enumerate}
In the same way that the André-Oort Conjecture implies Conjecture~\ref{conj:roro-ZP} for an hypersurface of a special variety,
the Hybrid Conjecture  implies Conjecture~\ref{conj:roro-ZP} for an hypersurface of a weakly special variety.

%

\section{Relation to the work of Aslanyan and Daw} \label{sec:AD}
In~\cite{AD}, V. Aslanyan and  C. Daw study a combination of the André-Oort and the André-Pink-Zannier conjecture \cite[Conj. 3.4--3.5 and Th.~3.9]{AD}. They introduce and study a family of~``$\Sigma$-special subvarieties'' in their terminology (\cite[Def. 3.1--3.2]{AD}).

Let~$\Gamma\sous X^+$ be an arithmetic variety and let~$\Sigma'\subseteq X$ be the subset of all special points.
Let~$\Sigma_{AD}\subseteq \Gamma\sous X^+$ be as in~\cite[Def.~3.1]{AD}. Namely, there exists a finite set~$\{y_1;\ldots;y_k\}\subseteq X$
 and a (possibly infinite) subset~$T\subseteq \Sigma'$ such that
\[
\Sigma=\left(\bigcup_{\tau\in T}\Gamma\sous \cH(\tau)\right)\cup\Gamma\sous \cH(y_1)\cup \ldots\cup\Gamma\sous\cH(y_k).
\]

The following statement implies that~\cite[Conj. 3.4--3.5]{AD} is contained in Conjecture~\ref{conj:AO+APZ}. Note that 
Conjecture~\ref{conj:AO+APZ} contains both André-Pink-Zannier and André-Oort conjecture, which together imply~\cite[Conj. 3.4--3.5]{AD} (see~\cite[Th.~3.9]{AD}).
\begin{lemma}
Let~$(M,X_M)\subseteq (G^k,X^k)$ be the smallest Shimura datum such that~$(y_1,\ldots,y_k)\in X_M$. Then we have
\[
\Sigma'\cup\cH(y_1)\cup \ldots\cup\cH(y_k)
\subseteq \Sigma_X(M,X_M,(y_1,\ldots,y_k))
\]
\end{lemma}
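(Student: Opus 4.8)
The plan is to verify the two types of inclusion separately, using the remarks following Definition~\ref{def:orbite hybride} and the functoriality built into $\Sigma$.

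First I would handle the points $\cH(y_j)$. By construction $(M,X_M)\leq (G^k,X^k)$ is the Mumford--Tate datum of $y:=(y_1,\ldots,y_k)$, so $(M,X_M)=(M_y,X_y)$, the point $y$ is Hodge generic in $X_M$, and $y\in\Sigma_X(M,X_M,y)$ trivially (take $\phi=\mathrm{id}$ on $M^{ad}$, or note this is immediate from the definition). Now the $j$-th projection $\mathrm{pr}_j:(G^k,X^k)\to(G,X)$ is a morphism of Shimura data sending $y$ to $y_j$; more precisely it induces an epimorphism $M_y\to M_{y_j}$ carrying $y$ to $y_j$, so $y_j\in\cH(y)$, the geometric (a fortiori the generalised) Hecke orbit of $y$ inside $X^k$. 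Applying~\eqref{eq:rem:hybrid4} of Remark~\ref{rem:hybrid4}, $\cH(y)\subseteq \Sigma_{X^k}(M_y,X_y,y)=\Sigma_{X^k}(M,X_M,y)$; and by Remark~\ref{rem:hybrid3} (transitivity), for each $y_j\in\cH(y)\subseteq\Sigma_{X^k}(M,X_M,y)$ we get $\Sigma_X(M_{y_j},X_{y_j},y_j)\subseteq\Sigma_X(M,X_M,y)$, hence in particular $\cH(y_j)\subseteq\Sigma_X(M_{y_j},X_{y_j},y_j)\subseteq\Sigma_X(M,X_M,y)$, using again that $\cH(y_j)\subseteq\Sigma_X(M_{y_j},X_{y_j},y_j)$ by~\eqref{eq:rem:hybrid4}. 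This deals with $\cH(y_1)\cup\cdots\cup\cH(y_k)$.

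Next I would handle $\Sigma'$, the set of all special points of $X$. This is precisely Remark~\ref{rem:hybrid1}: if $x'\in X$ is special then $M^{ad}_{x'}=\{1\}$, and the constant morphism $\phi:M^{ad}\to\{1\}$ satisfies $\phi(x^{ad})=x'^{ad}$, so $x'\in\Sigma_X(M,X_M,x')$ — wait, one must be careful that the ambient datum here is $(M,X_M,y)$ with $y$ Hodge generic, which is exactly the hypothesis of Definition~\ref{def:orbite hybride}; since $y$ is Hodge generic in $X_M$ by construction of $(M,X_M)$, the set $\Sigma_X(M,X_M,y)$ is defined, and Remark~\ref{rem:hybrid1} gives $\Sigma'\subseteq\Sigma_X(M,X_M,y)$ directly. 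Combining the two paragraphs yields the claimed containment.

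The argument is essentially a matter of assembling the remarks correctly, so there is no serious obstacle; the one point requiring a little care is the bookkeeping in the first paragraph — namely chaining the epimorphism $M_y\to M_{y_j}$ of Mumford--Tate groups (which exists because $y_j$ is the image of $y$ under a morphism of Shimura data, cf.\ Remark~\ref{rem:hybrid4}) together with the transitivity Remark~\ref{rem:hybrid3}, so as to land inside $\Sigma_X(M,X_M,y)$ rather than merely inside $\Sigma_X(M_{y_j},X_{y_j},y_j)$. One should also note at the outset that $y$ being Hodge generic in $X_M$ is automatic from the minimality in the statement, so that all the invoked sets $\Sigma_{(\cdot)}(M,X_M,y)$ are legitimately defined.
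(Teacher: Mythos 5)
Your overall plan --- Remark~\ref{rem:hybrid1} for $\Sigma'$, and the projections $\mathrm{pr}_j$ together with Remarks~\ref{rem:hybrid3}--\ref{rem:hybrid4} for the $\cH(y_j)$ --- is exactly the paper's argument. One step is mis-stated, however: you write ``$y_j\in\cH(y)$'' and then ``$y_j\in\Sigma_{X^k}(M,X_M,y)$'', but both $\cH(y)$ and $\Sigma_{X^k}(M,X_M,y)$ are subsets of $X^k$, whereas $y_j$ lives in $X$, so neither membership parses as written. The correct route, which is what the paper takes, is to observe that since $y$ is Hodge generic in $X_M$ the projection $\mathrm{pr}_j$ satisfies $\mathrm{pr}_j(M_y)=M_{y_j}$, hence induces an epimorphism $M^{ad}=M^{ad}_y\twoheadrightarrow M^{ad}_{y_j}$ carrying $y^{ad}$ to $y_j^{ad}$; by the very definition~\eqref{definition sigma hybride} this gives $y_j\in\Sigma_X(M,X_M,y)$ directly, with no need to pass through $\cH(y)$. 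Once you have $y_j\in\Sigma_X(M,X_M,y)$, your chain via Remarks~\ref{rem:hybrid4} and~\ref{rem:hybrid3}, namely $\cH(y_j)\subseteq\Sigma_X(M_{y_j},X_{y_j},y_j)\subseteq\Sigma_X(M,X_M,y)$, is precisely the paper's observation that $\Sigma_X(M,X_M,y)$ is closed under taking Hecke orbits, and the treatment of $\Sigma'$ is the same as the paper's. So the proof is correct in substance and follows the paper's approach, but the intermediate claim $y_j\in\cH(y)$ should be replaced by the direct appeal to Definition~\ref{def:orbite hybride}.
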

\begin{proof}The remarks~\ref{rem:hybrid0} and~\ref{rem:hybrid4} following Definition~\ref{definition sigma hybride} imply that the set $\Sigma_X(M,X_M,(y_1,\ldots,y_k))$ contains~$\Sigma'$ and is closed under taking Hecke orbits.
It is enough to prove that every~$i\in\{1;\ldots;k\}$, we have~$y_i\in \Sigma_X(M,X_M,(y_1,\ldots,y_k))$.  We may assume~$i=1$. The first projection map~$p_1:(G^k,X^k)\to (G,X)$ is  a morphism of Shimura data which maps~$(y_1,\ldots,y_k)$ to~$y_1$. Let~$(M_1,X_1)\subseteq (G,X)$ the smallest Shimura subdatum such that~$y_1\in X_1$.
As~$(y_1,\ldots,y_k)$ is Hodge generic in~$X_M$, we have~$p_1(X_M)=X_1$. It follows that we have a surjective morphism of Shimura data~$(M^{ad},X^{ad}_M)\to (M^{ad}_1,X^{ad}_1)$ which maps~$(y_1,\ldots,y_k)^{ad}$ to~$y^{ad}_1
$. By~\eqref{definition sigma hybride}, we have~$y_1\in \Sigma_X(M,X_M,(y_1,\ldots,y_k))$. This concludes.
\end{proof}

The following example shows that~\cite[Conj. 3.4--3.5]{AD} does not directly imply Conjecture~\ref{conj:AO+APZ}.

Let us consider a product of arithmetic varieties~$\Gamma\sous X^+=\Gamma_1\sous X_1^+\times\Gamma_2\sous X_2^+$.
Assume that~$\dim(\Gamma_2\sous X_2^+)>1$. Then it is known that there infinitely many special~$\tau_1,\ldots\in X_2^+$
with pairwise non-isomorphic Mumford-Tate groups. It follows that~$\cH(\tau_1),\ldots$ contains infinitely many pairwise disjoint generalised Hecke orbits. Let~$y$ be a non special point of~$X_1$. Then~$\cH((y,\tau_1)),\ldots$ contains infinitely many pairwise disjoints orbits of non-special points. In particular, there does not exists a subset~$\Sigma_{AD}\subseteq \Gamma\sous X^+$ satisfying~\cite[Def.~3.1]{AD} and such that
\[
\Gamma\sous \Gamma\{(y,\tau_1);\ldots\}\in \Sigma_{AD}.
\] 
Because~$(y,\tau)^{ad}$ does not depend on~$\tau$, we have, on the other hand,
\[
\{(y,\tau_1);\ldots\}\subseteq \Sigma_X(G_1,X_1,y).
\]

More generally our sets~$\Sigma_X(M,X_M,y)$ have the following functorial property.

\begin{proposition}Let~$(M,X_M)$, $(G_1,X_1)$ and~$(G_2,X_2)$ be Shimura data, and let~$x\in X_M$ be a Hodge generic point. Then
\[
\Sigma_{X_1\times X_2}(M,X_M,x)=\Sigma_{X_1}(M,X_M,x)\times \Sigma_{X_2}(M,X_M,x).
\]
\end{proposition}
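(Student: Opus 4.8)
The plan is to work entirely at the adjoint level and to use the canonical description of $\Sigma$ that underlies the proof of Proposition~\ref{prop:countable}: write $M^{ad}=\prod_{i\in I}M_i$ as a product of its $\QQ$-simple factors, write $X_M^{ad}=\prod_{i\in I}X_i$ and $x^{ad}=(x_i)_{i\in I}$, and let $p_{J}\colon M^{ad}\to\prod_{i\in J}M_i$ denote the quotient onto the factors indexed by $J\subseteq I$. Then a point $x'$ of a Shimura datum space lies in $\Sigma(M,X_M,x)$ if and only if there is a subset $J\subseteq I$ and an isomorphism of pointed Shimura data
\[
(M^{ad}_{x'},X^{ad}_{x'},x'^{ad})\;\simeq\;\Bigl(\prod_{i\in J}M_i,\ \prod_{i\in J}X_i,\ (x_i)_{i\in J}\Bigr):
\]
the forward implication is exactly what is proved for Proposition~\ref{prop:countable}, and the reverse one is immediate by composing the isomorphism with $p_J$. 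I will also use freely that, for a morphism of $\QQ$-groups, the Mumford-Tate group of the image of a point is the image of its Mumford-Tate group, and that Hodge-genericity of $x$ forces $p_{J'}(x^{ad})$ to have Mumford-Tate group $\prod_{i\in J'}M_i$ for every $J'\subseteq I$.

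The inclusion $\subseteq$ is the easy direction. Let $x'=(x_1',x_2')\in\Sigma_{X_1\times X_2}(M,X_M,x)$ and fix $\phi\in\Hom(M^{ad},M^{ad}_{x'})$ with $\phi(x^{ad})=x'^{ad}$. The projection $\mathrm{pr}_i\colon G_1\times G_2\to G_i$ carries $M_{x'}$ onto $M_{x_i'}$: minimality of $M_{x'}$ gives $M_{x'}\leq M_{x_1'}\times M_{x_2'}$, hence $\mathrm{pr}_i(M_{x'})\leq M_{x_i'}$, while $x_i'=\mathrm{pr}_i(x')$ factors through $\mathrm{pr}_i(M_{x'})$, giving the reverse inclusion. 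Consequently $\mathrm{pr}_i$ induces an epimorphism $\pi_i\colon M^{ad}_{x'}\to M^{ad}_{x_i'}$ with $\pi_i(x'^{ad})=x_i'^{ad}$, so $\pi_i\circ\phi$ witnesses $x_i'\in\Sigma_{X_i}(M,X_M,x)$. Hence $x'\in\Sigma_{X_1}(M,X_M,x)\times\Sigma_{X_2}(M,X_M,x)$.

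For the inclusion $\supseteq$, let $x_1'\in\Sigma_{X_1}(M,X_M,x)$ and $x_2'\in\Sigma_{X_2}(M,X_M,x)$, with subsets $J_1,J_2\subseteq I$ and isomorphisms $\iota_i$ as in the canonical description. First I record that $M_{(x_1',x_2')}\leq M_{x_1'}\times M_{x_2'}$ satisfies $M_{(x_1',x_2')}\cap\bigl(Z(M_{x_1'})\times Z(M_{x_2'})\bigr)=Z(M_{(x_1',x_2')})$ (both inclusions follow at once from the surjectivity of $\mathrm{pr}_i$ onto $M_{x_i'}$ proved above), so that $M^{ad}_{(x_1',x_2')}$ embeds into $M^{ad}_{x_1'}\times M^{ad}_{x_2'}$ with image the Mumford-Tate group of $(x_1'^{ad},x_2'^{ad})$. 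Transporting this picture through $\iota_1$ and $\iota_2$, I am reduced to identifying the Mumford-Tate group of the point $\bigl((x_i)_{i\in J_1},(x_i)_{i\in J_2}\bigr)$ of $\prod_{i\in J_1}X_i\times\prod_{i\in J_2}X_i$. That point is the image of $p_{J_1\cup J_2}(x^{ad})=(x_i)_{i\in J_1\cup J_2}$ under the injective ``diagonal'' homomorphism $\delta\colon\prod_{i\in J_1\cup J_2}M_i\to\prod_{i\in J_1}M_i\times\prod_{i\in J_2}M_i$, so its Mumford-Tate group lies in $\delta\bigl(\prod_{i\in J_1\cup J_2}M_i\bigr)$; since $\delta$ is injective and $p_{J_1\cup J_2}(x^{ad})$ has Mumford-Tate group $\prod_{i\in J_1\cup J_2}M_i$, the Mumford-Tate group is exactly $\delta\bigl(\prod_{i\in J_1\cup J_2}M_i\bigr)\simeq\prod_{i\in J_1\cup J_2}M_i$, and the point corresponds to $(x_i)_{i\in J_1\cup J_2}$. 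Thus $(M^{ad}_{(x_1',x_2')},X^{ad}_{(x_1',x_2')},(x_1',x_2')^{ad})$ is isomorphic to $\bigl(\prod_{i\in J_1\cup J_2}M_i,\prod_{i\in J_1\cup J_2}X_i,(x_i)_{i\in J_1\cup J_2}\bigr)$, and the canonical description yields $(x_1',x_2')\in\Sigma_{X_1\times X_2}(M,X_M,x)$.

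The main obstacle is this last step: one must check that the diagonal subgroup $\delta\bigl(\prod_{i\in J_1\cup J_2}M_i\bigr)$ is the \emph{whole} Mumford-Tate group of the pair of adjoint points, with no collapsing along the overlap $J_1\cap J_2$ — this is precisely where Hodge-genericity of $x$ is used, and it is also what makes the embedding $M^{ad}_{(x_1',x_2')}\hookrightarrow M^{ad}_{x_1'}\times M^{ad}_{x_2'}$ non-formal. Everything else is bookkeeping with the structure of epimorphisms between products of $\QQ$-simple adjoint groups already invoked for Proposition~\ref{prop:countable}.
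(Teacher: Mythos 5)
The paper states this Proposition without proof, so there is no author argument to compare against; I can only assess your argument on its own terms, and it is correct.

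Your strategy — recast membership in $\Sigma_X(M,X_M,x)$ as an isomorphism of the pointed adjoint Mumford-Tate datum with some $(M_J,X_J,p_J(x^{ad}))$, which is exactly what the proof of Proposition~\ref{prop:countable} yields, and then translate the claimed product formula into the index-set identity ``$J_1,J_2 \rightsquigarrow J_1\cup J_2$'' — is the natural one given the surrounding text, and the details check out. A few points worth noting. In the $\subseteq$ direction, the chain ``$M_{x'}\leq M_{x_1'}\times M_{x_2'}$ by minimality, and $\mathrm{pr}_i(M_{x'})=M_{x_i'}$ since $x_i'$ factors through $\mathrm{pr}_i(M_{x'})$'' is correct, and passing to adjoints is legitimate because an epimorphism carries centre into centre, so $\pi_i:M^{ad}_{x'}\to M^{ad}_{x_i'}$ is well defined and sends $x'^{ad}$ to $x_i'^{ad}$. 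In the $\supseteq$ direction, the identity $M_{(x_1',x_2')}\cap(Z(M_{x_1'})\times Z(M_{x_2'}))=Z(M_{(x_1',x_2')})$ does hold (the inclusion $\subseteq$ is formal, and $\supseteq$ uses the surjectivity of the two projections you established), and it is precisely what makes $M^{ad}_{(x_1',x_2')}$ embed in $M^{ad}_{x_1'}\times M^{ad}_{x_2'}$ with image equal to the Mumford-Tate group of $(x_1'^{ad},x_2'^{ad})$. The final reduction to the diagonal $\delta\colon M_{J_1\cup J_2}\hookrightarrow M_{J_1}\times M_{J_2}$, combined with the fact that the image of a Mumford-Tate group under any morphism of Shimura data is the Mumford-Tate group of the image point, cleanly identifies the adjoint Mumford-Tate datum of $(x_1',x_2')$ with $(M_{J_1\cup J_2},X_{J_1\cup J_2},p_{J_1\cup J_2}(x^{ad}))$. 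One place where you could have been slightly more explicit is the assertion that Hodge-genericity of $x$ gives $M_{p_{J'}(x^{ad})}=M_{J'}$ for every $J'$; the one-line justification is that $M_x=M$ forces $M_{x^{ad}}=M^{ad}$ (otherwise its preimage in $M$ would be a proper subgroup through which $x$ factors), and then $M_{p_{J'}(x^{ad})}=p_{J'}(M_{x^{ad}})=M_{J'}$. With that spelled out, the argument is complete.
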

\section{Analogy with the Mordell-Lang conjecture}\label{sec:ML}
Let~$A$ be an abelian variety and~$x\in A$. Given an abelian variety~$B$, let
\[
\mathcal{M}_B(A,x):=\{\phi(x)|\phi\in\Hom(A;B)\}\subseteq B.
\]
When~$B=A$, then~$\mathcal{M}(x)=\End(A)\cdot x$. This is always a subgroup of finite type. If~$\End(A)=\Z$, then~$\mathcal{M}(A,x)$
is the subgroup generated by~$x$ in~$A=B$. 

Assume that~$A=B^k$ with~$k\in\Z_{\geq0}$, and write~$x=(x_1,\ldots,x_k)$. Then~$\mathcal{M}_B(A,x)$ is the~$\End(A)$ submodule  generated by~$\{x_1;\ldots;x_k\}$. If~$\End(A)=\Z$, then~$\mathcal{M}_B(A,x)$ is exactly the subgroup (of finite type) generated by~$\{x_1;\ldots;x_k\}$.

Let~$I$ be the set of~$(A',x')$ where there exists an isogeny~$\iota:A'\to A$ such that~$\iota(x')=x$.
\[
\mathcal{ML}_B(A,x):=\bigcup_{(A',x')\in I}\mathcal{M}_B(A',x')=
\{\phi(x')|(A',x')\in I,\phi\in\Hom(A';B)\}.
\]

We have the following analogy. The couple~$(A,x)$ corresponds to a pointed Shimura datum~$(M,X_M,x)$.
And a couple~$(A',x')\in I$ corresponds to a Shimura datum~$(M',X_{M'},x')$ such that there exists an isomorphism~$(M'^{ad},X^{ad}_{M'})\to (M^{ad},X^{ad}_{M})$ mapping~$x'^{ad}$ to~$x^{ad}$. In this analogy, the  set~$\mathcal{M}_B(A,x)$
corresponds to a generalised Hecke orbit~$\cH_{\Gamma\sous X^+}(M,X_M,x)$ and the set~$\mathcal{ML}_B(A,x)$ corresponds to the hybrid Hecke orbit~$\Sigma_{\Gamma\sous X^+}(M,X_M,x)$. The set of torsion points of~$B$ is~$\mathcal{ML}_B(A,0)$
and the set special points in~$\Gamma\sous X^+$ is~$\Sigma_{\Gamma\sous X^+}(\{1\},\{1\},1)$, where~$(\{1\},\{1\},1)$ is the trivial Shimura datum.

In this analogy, the the André-Oort conjecture is an analogue of Manin-Mumford conjecture\footnote{Proved by Raynaud.}, the André-Pink-Zannier conjecture is an analogue of the so-called Mordellic part\footnote{In the words of~\cite{McQ}. This Mordellic part is proved by Faltings~\cite{FaFaFa}} of the Mordell-Lang conjecture. The analogue of the Mordell-Lang conjecture\footnote{Proved by McQuillan~\cite{McQ}.} is then Conjecture~\ref{conj:AO+APZ}. 
Mordell-Lang conjecture contains both Manin-Mumford conjecture and its Mordellic part, but is not a direct consequence of a conjunction of Manin-Mumford conjecture and its Mordellic part. Similarly Conjecture~\ref{conj:AO+APZ} contains the André-Oort conjecture and the André-Pink-Zannier conjecture, but is not a direct consequence of the conjunction of the André-Oort conjecture and the André-Pink-Zannier conjecture. 
This can be compared to ~\cite[Conj. 3.4--3.5]{AD} (cf~\cite[Th.~3.9]{AD}).

\end{document}